\newcommand{\ol}[1]{\ensuremath{\overline{#1}}} 
\newcommand{\del}{\Delta}
\newcommand{\nab}{\nabla}
\newcommand{\conv}{\operatorname{Conv}}
\newcommand{\cork}{\operatorname{cork}}
\newcommand{\nul}{\operatorname{null}}
\newcommand{\tri}{\operatorname{\triangle}} 
\newtheorem{thm}{Theorem}[section]
\newtheorem{lemma}[thm]{Lemma}
\newtheorem{sublemma}[thm]{Claim}
\newtheorem{cor}[thm]{Corollary}
\newtheorem{prop}[thm]{Proposition}
\newtheorem{dfn}[thm]{Definition}
\theoremstyle{remark}
\newtheorem{remark}[thm]{Remark}
\newtheorem{question}[thm]{Question}
\newtheorem{Example}[thm]{Example}
\newtheorem{algorithm}[thm]{Algorithm}
\newenvironment{ex}{\begin{Example}\pushQED{\qed}}{\popQED\end{Example}}
\theoremstyle{plain}
\newtheorem{citedfn}[thm]{Definition}
\newenvironment{subproof}[1][\proofname]{%
  \begin{proof}[#1]%
}{%
  \end{proof}%
}
\newtheorem*{thmcoeffs}{Theorem \ref{coeffs}}
\numberwithin{equation}{thm}
\author{Amanda Cameron\thanks{Email: \texttt{cameron\char64mis.mpg.de}}%
  \and Alex Fink\thanks{Email: \texttt{a.fink\char64qmul.ac.uk}}}
\title{The Tutte polynomial via lattice point counting}
\date{\today}
\begin{document}

\maketitle
\begin{abstract}
We recover the Tutte polynomial of a matroid, up to change of coordinates, 
from an Ehrhart-style polynomial
counting lattice points in the Minkowski sum of its base polytope and scalings of simplices.
Our polynomial has coefficients of alternating sign
with a combinatorial interpretation closely tied to the Dawson partition.
Our definition extends in a straightforward way to polymatroids,
and in this setting our polynomial has K\'alm\'an's internal and external activity polynomials as its univariate specialisations.
\end{abstract}

\section{Introduction}
\label{sec:in}

The Tutte polynomial $T_M(x,y)$ of a matroid $M$, 
formulated by Tutte for graphs and generalised to matroids by Crapo,
is perhaps the invariant most studied by researchers in either field,
on account of its diverse applications.
Most straightforwardly, any numerical function of matroids or graphs (or function valued in another ring) which can be computed by a deletion-contraction recurrence
can also be obtained as an evaluation of the Tutte polynomial. 
Examples include the number of bases and the number of independent sets in a given matroid, and the number of acyclic orientations of a graph.

A matroid is expediently encoded by its \emph{matroid (base) polytope}, 
the convex hull of the indicator vectors of the bases.
One useful feature of this polytope is that no other \emph{lattice points}, i.e.\ points with integer coordinates, are caught in the convex hull.
Therefore, counting the lattice points in the base polytope of $M$ tells us the number of bases of $M$,
which equals $T_M(1,1)$. 

The Tutte polynomial can be written as a generating function for bases of a matroid according to their \emph{internal} and \emph{external activity},
two statistics that count elements which permit no basis exchanges of certain shapes.
K\'alm\'an \cite{kalman} observed that the active elements of a basis can be readily discerned 
from the matroid polytope:
given a lattice point of this polytope, one inspects in which ways one can increment one coordinate, decrement another, and remain in the polytope.
The lattice points one reaches by incrementing a coordinate are those contained in the Minkowski sum with the standard simplex $\conv\{e_1,\ldots,e_n\}$,
and decrementing a coordinate is similarly encoded by the upside-down simplex $\conv\{-e_1,\ldots,-e_n\}$.

Our main result, Theorem~\ref{lpg}, proves that the bivariate matroid polynomial
that counts the lattice points in the Minkowski sum of the matroid polytope and scalings of the above simplices
contains the same information as the Tutte polynomial.
To be precise, either polynomial is an evaluation of the other.

K\'alm\'an's interest in this view of activity arose from the question of 
enumerating spanning trees of bipartite graphs according to their vector of degrees at the vertices of one colour.  
The set of such vectors is rarely a matroid polytope, but it is always a \emph{polymatroid} polytope.
Polymatroids were introduced in Edmonds' work in optimisation circa 1970, as an extension of matroids formed by
relaxing one matroid rank axiom so as to allow singletons to have arbitrarily large rank.
Postnikov's \emph{generalised permutohedra} \cite{postnikov} are virtually the same object.
(Integer generalised permutohedra are exactly translates of polymatroid polytopes,
and they are polymatroid polytopes just when they lie in the closed positive orthant.)

One would like an analogue of the Tutte polynomial for polymatroids.
Unlike the case for matroids, the simultaneous generating function for internal and external activity does not answer to this desire,
as it is not even an invariant of a polymatroid: 
Example \ref{bivaractivity} shows that it varies with different choices of ordering of the ground set.
Only the generating functions for either activity statistic singly are in fact invariants, 
as K\'alm\'an proved, naming them $I_r(\xi)$ and $X_r(\eta)$.
Our lattice point counting polynomial can be applied straightforwardly to a polymatroid,
and we prove in Theorem~\ref{thm:activity} that it specialises to $I_r(\xi)$ and~$X_r(\eta)$.  
That is, the invariant we construct is a bivariate analogue of K\'{a}lm\'{a}n's activity polynomials, which answers a question of~\cite{kalman}.

In seeking to generalise the Tutte polynomial to polymatroids one might be interested in other properties than activity, especially its universal property with respect to deletion-contraction invariants.
Like matroids, polymatroids have a well-behaved theory of \emph{minors}, analogous to graph minors:
for each ground set element $e$ one can define a deletion and contraction,
and knowing these two determines the polymatroid.  
The deletion-contraction recurrence for the Tutte polynomial reflects this structure.
The recurrence has three cases,
depending on whether $e$ is a loop, a coloop, or neither of these.
The number of these cases grows quadratically with the maximum possible rank of a singleton,
making it correspondingly hairier to write down a universal invariant.
In 1993 Oxley and Whittle \cite{whittle} addressed the case of \emph{$\it{2}$-polymatroids},
where singletons have rank at most~2, and prove that 
the corank-nullity polynomial is still universal for a form of deletion-contraction recurrence. 
The general case, with a natural assumption on the coefficients of the recurrence but one slightly stronger than  in~\cite{whittle},
is addressed using coalgebraic tools in~\cite{DFM},
fitting into the tradition of applications of coalgebras and richer structures in combinatorics:
see \cite{AA,KMT} for other work of particular relevance here.

For our polymatroid invariant we give a recurrence that involves not just the deletion and contraction,
but a whole array of ``slices'' of which the deletion and the contraction are the extremal members (Theorem~\ref{delcont}).
We do not know a recurrence relation where only the deletion and the contraction appear.

We would be remiss not to mention the work of K\'alm\'an and Postnikov \cite{kalman2} proving the central conjecture of \cite{kalman},
that swapping the two colours in a bipartite graph leaves $I_r(\xi)$ unchanged.
Their proof also exploits Ehrhart-theoretic techniques, but the key polytope is the \emph{root polytope} of the bipartite graph.
We expect that it should be possible to relate this to our machinery via the Cayley trick.
Oh \cite{oh} has also investigated a similar polyhedral
construction, as a way of proving Stanley's pure O-sequence
conjecture for cotransversal matroids.  

This paper is organised as follows. 
Section~\ref{sec:prelim} introduces the definitions of our main objects.
In Section \ref{sec:hints} we begin by explaining the construction of our polynomial for matroids, followed by how this is related to the Tutte polynomial (our main theorem, Theorem \ref{lpg}). 
In Section \ref{sec:coeffs}, we give a geometric interpretation of the coefficients of our polynomial, by way of a particular subdivision of the relevant polytope, which has a simple interpretation in terms of Dawson partitions. 
In Section~\ref{sec:polymatroids}, we discuss the extension to polymatroids,
including properties our invariant satisfies in this generality.
Section~\ref{sec:kalman} is dedicated to the relationship to K\'alm\'an's univariate activity invariants.

\subsection*{Acknowledgments}

We thank Tam\'as K\'alm\'an, Madhusudan Manjunath, and Ben P.\ Smith for fruitful discussions,
Iain Moffatt for very useful expository advice on a draft,
and the anonymous reviewers for FPSAC 2016 for their suggested improvements. 
The authors were supported by EPSRC grant EP/M01245X/1.

\section{Preliminaries}
\label{sec:prelim}

We assume the reader has familiarity with basic matroid terminology, and
recommend \cite{Oxley} as a reference for this material.
Given a set $E$, let $\mathcal P(E)$ be its power set.

\begin{dfn}
A \emph{polymatroid} $M=(E,r)$ on a finite \emph{ground set} $E$ 
consists of the data of a rank function $r:\mathcal{P}(E)\rightarrow \mathbb{Z^+}\cup\{0\}$ such that, for $X,Y\in\mathcal{P}(E)$, the following conditions hold:
\begin{itemize}
\item[{\rm P1}.] $r(\emptyset)=0$
\item[{\rm P2}.] If $Y\subseteq X$, then $r(Y)\leq r(X)$
\item[{\rm P3}.] $r(X\cup Y)+r(X\cap Y)\leq r(X)+r(Y)$
\end{itemize}
\end{dfn}
A \emph{matroid} is a polymatroid such that $r(i)\leq1$ for all $i\in E$.
Like matroids, polymatroids can be defined \emph{cryptomorphically} in other equivalent ways,
the way of most interest to us being as polytopes (Definition~\ref{dfn:polytope}).
We will not be pedantic about which axiom system we mean when we say ``polymatroid''.

The following three 
definitions of activity for polymatroids are from \cite{kalman}.

\begin{dfn}
A vector ${\bf x}\in\mathbb Z^E$ is called a \emph{base} of a polymatroid $M=(E,r)$ if ${\bf x}\cdot{\bf e}_E=r (E)$ 
and ${\bf x}\cdot{\bf e}_S\leq r (S)$ for all subsets $S\subseteq E$.
\end{dfn}

Let $\mathcal B_M$ be the set of all bases of a polymatroid $M=(E,r)$.

\begin{dfn}
A \emph{transfer} is possible from $u_1\in E$ to $u_2\in E$ 
in the base ${\bf x}\in \mathcal B_M\cap\mathbb{Z}^E$
if by decreasing the $u_1$-component of ${\bf x}$ by $1$ and increasing its $u_2$-component by $1$ we get another base.
\end{dfn}

Like matroids, polymatroids have a base exchange property \cite[Theorem 4.1]{hibi}.
If $\bf x$ and $\bf y$ are in $\mathcal B_M$ and ${\bf x}_i>{\bf y}_i$ for some $i\in E$,
then there exists $l$ such that ${\bf x}_l< {\bf y}_l$ and ${\bf x}-{\bf e}_i+{\bf e}_l$ is again in $\mathcal B_M$, 
or in other words, such that a transfer is possible from $i$ to $l$ in~$\bf x$.

Fix a total ordering of the elements of~$E$.

\begin{dfn}\label{dfn:activity}
\begin{enumerate}[i.]
\item We say that $u\in E$ is \emph{internally active} with respect to the base $x$ if no transfer is possible in~$x$ \emph{from} $u$ to a smaller element of $E$.
\item We say that $u\in E$ is \emph{externally active} with respect to $x$ if no transfer is possible in~$x$ \emph{to} $u$ from a smaller element of $E$.
\end{enumerate}
\end{dfn}

For $x\in \mathcal B_M\cap\mathbb{Z}^E$\!, let the set of internally active elements with respect to $x$ be denoted with $\mathrm{Int}(x)$, and let $\iota(x)=|\mathrm{Int}(x)|$;
likewise, let the set of externally active elements be denoted with $\mathrm{Ext}(x)$
and $\varepsilon(x)=|\mathrm{Ext}(x)|$.
Let $\ol{\iota}(x),\ol{\varepsilon}(x)$ denote 
the respective numbers of inactive elements. 

When $M$ is a matroid, the following definitions of activity are more commonly used,
analogous to Tutte's original formulation using spanning trees of graphs. 
\begin{dfn}\label{dfn:activity 2}
Take a matroid $M=(E,r)$. Let $B$ be a basis of $M$. 
\begin{enumerate}[i.]
\item We say that $e\in E-B$ is \emph{externally active} with respect to $B$ if $e$ is the smallest element in the unique circuit contained in $B\cup e$, with respect to the ordering on $E$.
\item We say that $e\in B$ is \emph{internally active} with respect to $B$ if $e$ is the smallest element in the unique cocircuit in $(E\setminus B)\cup e$.
\end{enumerate}
\end{dfn}

\begin{remark}\label{rem:activity}
In the cases where it is set forth, namely $e\in E-B$ for external activity and $e\in B$ for internal activity,
Definition~\ref{dfn:activity 2} agrees with Definition~\ref{dfn:activity}.
But it will be crucial that we follow Definition~\ref{dfn:activity} where Definition~\ref{dfn:activity 2} doesn't apply:
when $M$ is a matroid and $B$ a basis thereof, 
we consider all elements $e\in B$ externally active, and all elements $e\in E-B$ internally active, with respect to~$B$.
\end{remark}

\begin{dfn}\label{dfn:Tutte}
Let $M=(E,r)$ be a matroid with ground set $E$ and rank function $r:\mathcal{P}(E)\rightarrow \mathbb{Z^+}\cup\{0\}$. The \emph{Tutte polynomial} of $M$ is 
\begin{equation}\label{eq:Tutte}
    T_M(x,y) = \sum_{S\subseteq E} (x-1)^{r(M)-r(S)}(y-1)^{|S|-r(S)}.
\end{equation}    
    \end{dfn}

The presentation of the Tutte polynomial in Definition~\ref{dfn:Tutte}
is given in terms of the \emph{corank-nullity polynomial}:
up to a change of variables, it is the generating function for subsets $S$ of the
ground set by their corank $r(M)-r(S)$ and nullity $|S|-r(S)$. 
When $M$ is a matroid, the Tutte polynomial is equal to a generating function for activities:
\begin{equation}\label{eq:act}
T_M(x,y)=\frac1{x^{|E|-r(E)}y^{r(E)}}\sum_{B\in\mathcal{B}_M} x^{\iota(B)}y^{\varepsilon(B)}.
\end{equation}
The unfamiliar denominator in this formula appears on account of Remark~\ref{rem:activity}.
Although on its face the right hand side of the formula depends on the ordering imposed on~$E$,
its equality with the right hand side of equation~\ref{eq:Tutte} shows that there is no such dependence.
For polymatroids, activity invariants can be defined as well: see Definition~\ref{activity} and following discussion.

In this paper we will be principally viewing polymatroids as polytopes. 
These polytopes live in the vector space $\mathbb R^E$,
where the finite set $E$ is the ground set of our polymatroids.
For a set $U\subseteq E$, let ${\bf e}_U\in\mathbb R^E$ be the indicator vector of $U$,
and abbreviate ${\bf e}_{\{i\}}$ by ${\bf e}_{i}$.
Let $r:\mathcal{P}(E)\rightarrow\mathbb{Z}^+\cup\{0\}$ be a rank function,
and $M=(E,r)$ the associated polymatroid. 
The \emph{extended polymatroid} of $M$ is defined to be the polytope
\begin{displaymath}EP(M)=\{{\bf x}\in\mathbb{R}^E \ | \  {\bf x}\geq 0\mbox{ and }
{\bf x}\cdot{\bf e}_U\leq r(U) \ \mathrm{for \ all} \ U\subseteq E\}.\end{displaymath}

\begin{dfn}\label{dfn:polytope}
The \emph{polymatroid (base) polytope} of $M$
is a face of the extended polymatroid:
\[P(M) = EP(M)\cap\{{\bf x}\in\mathbb{R}^E \ | \  {\bf x}\cdot{\bf e}_E= r(E)\}
= \operatorname{conv} \mathcal B_M.\]
\end{dfn}
Either one of these polytopes contains all the information in the rank function.
In fact, they can be used as cryptomorphic axiomatisations of polymatroids:
a polytope whose vertices have nonnegative integer coordinates is a polymatroid polytope
if and only if all its edges are parallel to a vector of the form ${\bf e}_{i}-{\bf e}_{j}$ for some $i,j\in E$.
Extended polymatroids permit a similar characterisation;
moreover, they can be characterised as those polytopes over which a greedy algorithm
correctly optimises every linear functional with nonnegative coefficients, 
which was the perspective of their inventor Edmonds \cite{edmonds}.

\section{Our invariant}
\label{sec:hints}

This section describes the construction of our matroid polynomial, to be denoted $Q'_M$, 
which counts the lattice points of a particular Minkowski sum of polyhedra,
and explains its relation to the Tutte polynomial.
In anticipation of Section~\ref{sec:polymatroids} we set out the definition in the generality of polymatroids.

\subsection{Construction}
\label{sec:const}

Let $\Delta$ be the standard simplex in $\mathbb R^E$ of dimension $|E|-1$,
that is 
\[\Delta = \operatorname{conv}\{\mathbf e_{i}:i\in E\},\]
and $\nabla$ be its reflection through the origin,
$\nabla = \{-x : x\in\Delta\}$.
The faces of $\Delta$ are the polyhedra
\[\Delta_S = \operatorname{conv}\{\mathbf e_{i}:i\in S\}\]
for all nonempty subsets $S$ of~$E$; similarly, 
the faces of $\nabla$ are the polyhedra $\nabla_S$ given as the reflections of the $\Delta_S$.
(We exclude the empty set as a face of a polyhedron.)

We consider $P(M)+u\Delta+t\nabla $ where $M=(E,r)$ is any polymatroid and $u,t\in\mathbb{Z}^+\cup\{0\}$. 
We are interested in the lattice points in this sum. 
These can be interpreted as the vectors
that can be turned into bases of~$M$ by incrementing a coordinate $t$ times
and decrementing one $u$ times.
By Theorem 7 of \cite{mcmullan}, the number 
\begin{equation}\label{qdef}
Q_{M}(t,u):=\#(P(M)+u\Delta+t\nabla )\cap\mathbb{Z}^E
\end{equation}
of lattice points in the sum
is a polynomial in $t$ and~$u$, of degree $\dim(P(M)+u\Delta+t\nabla) = |E|-1$.
We will mostly work with this polynomial after a change of variables:
letting the coefficients $c_{ij}$ be defined by \begin{displaymath}Q_{M}(t,u)=\sum_{i,j} c_{ij}\binom{u}{j}\binom{t}{i},\end{displaymath}
we use these to define the polynomial \begin{displaymath}Q'_M(x,y)=\sum_{ij}c_{ij}(x-1)^i(y-1)^j.\end{displaymath}
The change of variables is chosen so that
applying it to $\#(u\Delta_X+t\nabla_Y)$ yields $x^iy^j$,
where $\Delta_X$ and $\nabla_Y$ are faces of $\Delta$ and $\nabla$ 
of respective dimensions $i$ and~$j$.  This will allow for a
combinatorial interpretation of the coefficients of~$Q'$ in Theorem~\ref{coeffs}.

\subsection{Relation to the Tutte polynomial}
\label{tutte}

For the remainder of Section~\ref{sec:hints}, we assume that $M$ is a matroid.
The main theorem of this section is that $Q'_M(x,y)$ is an evaluation of the Tutte polynomial, and in fact one that contains
precisely the same information.  
As such, the Tutte polynomial can be evaluated by lattice point 
counting methods.

\begin{restatable}{thm}{lpg}
\label{lpg}
Let $M=(E,r)$ be a matroid. Then
\begin{multline*}
T_M(x,y)=(xy-x-y)^{|E|}(-x)^{r(M)}(-y)^{|E|-r(M)}\cdot\\ \sum_{u,t\geq 0} Q_M(t,u)\cdot\left(\dfrac{y-xy}{xy-x-y}\right)^t\left(\dfrac{x-xy}{xy-x-y}\right)^u
\end{multline*}
\end{restatable}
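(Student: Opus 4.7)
My plan is to first translate the stated formal identity into a more concrete equivalence between $Q'_M$ and the Tutte polynomial under a change of variables, and then prove that equivalence directly. The reason is that the sum $\sum_{u,t\geq 0} Q_M(t,u)\,s^t r^u$, with $Q_M(t,u)=\sum_{ij} c_{ij}\binom{t}{i}\binom{u}{j}$, evaluates via the elementary generating function $\sum_{k\geq0}\binom{k}{i}z^k = z^i/(1-z)^{i+1}$ to $(1-s)^{-1}(1-r)^{-1}\, Q'_M\bigl(\tfrac1{1-s},\tfrac1{1-r}\bigr)$. So the theorem is equivalent to the assertion that $T_M(x,y)$ equals a prescribed prefactor (depending on $x,y,|E|,r(M)$) times $Q'_M$ evaluated at $\bigl(\tfrac1{1-s(x,y)},\tfrac1{1-r(x,y)}\bigr)$, where $s,r$ are the two rational functions appearing in the theorem.

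The main content is then to give a matroid-theoretic expansion of $Q'_M$ that can be matched to the corank--nullity expansion of $T_M$. For this I would exploit the geometry behind the definition: the change of variables $Q_M\mapsto Q'_M$ is rigged so that each monomial $(x-1)^i(y-1)^j$ corresponds to a pair $(\Delta_X,\nabla_Y)$ of faces of $\Delta$ and $\nabla$ of dimensions $i,j$, since $\#(u\Delta_X+t\nabla_Y)=\binom{u+i}{i}\binom{t+j}{j}$ and the binomial-to-power-of-$(x-1)$ conversion is precisely the definitional change of variables. The plan is therefore to decompose each lattice point of $P(M)+u\Delta+t\nabla$ canonically as $b+a+c$ with $b$ a vertex (basis) of $P(M)$, $a\in u\Delta_X$ and $c\in t\nabla_Y$ interior to certain faces, and to identify the combinatorial data $(b,X,Y)$ indexing the decomposition. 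Summing the resulting contributions yields an expansion of $Q'_M$ indexed by matroid data---morally, by pairs of subsets of $E$ carrying rank information---that can be massaged into the corank--nullity form $\sum_{S\subseteq E}(x-1)^{r(M)-r(S)}(y-1)^{|S|-r(S)}$ after substitution, at the same time producing the prefactor $(xy-x-y)^{|E|}(-x)^{r(M)}(-y)^{|E|-r(M)}$.

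The main obstacle is fixing the canonical decomposition. A lattice point of $P(M)+u\Delta+t\nabla$ generally admits many decompositions $b+a+c$, so naively counting decompositions overcounts. A canonical choice---likely via a lexicographic or greedy procedure on a fixed ordering of $E$, so that both the choice of $b$ and the membership of $a,c$ in prescribed open faces $\Delta_X,\nabla_Y$ are well-defined---must be made so that each lattice point is assigned unambiguously to a triple $(B,X,Y)$ of matroid-theoretic meaning. This is where the Dawson partition (anticipated in the abstract, and underlying Theorem~\ref{coeffs}) is expected to play the decisive role: its boolean intervals, indexed by bases and refined by internally and externally active elements, are a natural match for the combinatorial data $(B,X,Y)$. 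Once this canonical assignment is set up, the remaining generating-function manipulation to produce the stated prefactor and to recognise the corank--nullity polynomial is routine algebra in the variables $s,r$ and $x,y$.
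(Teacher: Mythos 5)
Your generating-function reduction is correct and matches the paper: the paper likewise proves the theorem by first establishing the relation $Q'_M(x,y)=\frac{x^{|E|-r(M)}y^{r(M)}}{x+y-1}T_M(\frac{x+y-1}{y},\frac{x+y-1}{x})$ (its Theorem~\ref{thm:T to Q}) and then pushing the formal power series $\sum_{t,u}Q_M(t,u)a^t b^u = vw\,Q'_M(v,w)$ through the substitutions $v=-x/(xy-x-y)$, $w=-y/(xy-x-y)$.

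Where you diverge is in the substance of proving that $Q'_M$--$T_M$ relation, and your proposed route is considerably harder than the one the paper takes. The paper does not use a canonical decomposition of lattice points or the Dawson partition at this stage. Instead it avoids the whole problem of making decompositions unique: it partitions the lattice points of $P(M)+i\Delta+j\nabla$ by the much coarser invariant $S=\{e:q_e>0\}$, observes that for fixed $S$ the count is a stars-and-bars composition count involving $\operatorname{null}(S)$ and $\operatorname{cork}(S)$ plus a free ``cancellation'' parameter $k$, and then the identity $\sum_i\binom{i+a}{b}x^i=x^{b+a}/(1-x)^{b+1}$ turns the double sum directly into the corank--nullity generating function $\sum_S(\cdot)^{\operatorname{null}(S)}(\cdot)^{\operatorname{cork}(S)}$. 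Because the organising datum is a subset $S$ (not a basis together with activity data), the corank--nullity form of $T_M$ falls out without any detour through activities.

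By contrast, your plan---canonically assigning each lattice point to a triple $(B,X,Y)$ via a greedy procedure and the Dawson partition---is essentially the machinery the paper develops later for Theorem~\ref{coeffs}, and it is genuinely hard: one must build the regular mixed subdivision, prove the decomposition is well-defined (Lemma~\ref{lem:B} and its algorithm), handle the overlaps of top-degree cells by inclusion--exclusion over a poset $P$, and only then does one obtain a combinatorial formula for the coefficients of $Q'_M$. Even at that point you are not done: the formula you would obtain is in terms of Dawson intervals indexed by bases with their active/inactive elements, which matches the \emph{activity} expansion of the Tutte polynomial (equation~\eqref{eq:act} in the paper), not the corank--nullity expansion you say you would ``massage into.'' Passing from the activity formula to the corank--nullity formula is a separate nontrivial theorem of matroid theory, not routine algebra, so that step is a genuine gap as written. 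Your approach can be completed, but it re-derives Theorem~\ref{coeffs}'s content and then needs the activity-vs-corank-nullity equivalence on top, whereas the paper's $S$-based partition gets to corank--nullity in one step.
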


Our proof of Theorem~\ref{lpg} arrives first at the relationship between $Q'_M(x,y)$ and the Tutte polynomial.

\begin{thm}\label{thm:T to Q}
Let $M=(E,r)$ be a matroid. Then we have that
\begin{equation}
Q'_M(x,y)=\dfrac{x^{|E|-r(M)}y^{r(M)}}{x+y-1}\cdot T_M\left(\dfrac{x+y-1}{y},\dfrac{x+y-1}{x}\right)
\end{equation}
\end{thm}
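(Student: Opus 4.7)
The plan is to decompose the Minkowski sum $P(M) + u\Delta + t\nabla$ into pieces indexed by bases of~$M$, so that lattice-point counting in each piece, followed by the change of variables $Q_M\mapsto Q'_M$ from Section~\ref{sec:const}, matches the per-basis contribution of the activity form~(\ref{eq:act}) of the Tutte polynomial.

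I would begin by introducing a mixed subdivision of $P(M) + u\Delta + t\nabla$ induced by a generic linear functional on $\mathbb R^E$ compatible with the fixed ordering of~$E$. The maximal cells would take the form $\{B\} + u\Delta_{X(B)} + t\nabla_{Y(B)}$ for a basis~$B$ and faces $\Delta_{X(B)},\nabla_{Y(B)}$; genericity together with the matroid exchange property would determine $X(B)$ and $Y(B)$ in terms of which transfers at~$B$ are permitted by the ordering. Half-open versions of these cells would then tile $(P(M) + u\Delta + t\nabla)\cap\mathbb{Z}^E$ without overlap.

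Next I would identify $X(B)$ and $Y(B)$ with the matroid activity data of~$B$ (Definition~\ref{dfn:activity 2}, bridged to the polymatroid version by Remark~\ref{rem:activity}). Using the identity $\binom{u+i}{i}\mapsto x^i$ noted at the end of Section~\ref{sec:const}, the change of variables applied to the lattice-point count of a single cell would produce a contribution of the form
\[ x^{|E|-r-b(B)}\,y^{r-a(B)}\,(x+y-1)^{a(B)+b(B)-1}, \]
where $a(B)$ and $b(B)$ are the matroid internal/external activity counts of~$B$, and the factor $(x+y-1)^{a(B)+b(B)-1}$ records the half-open boundary adjustments. Summing over $B\in\mathcal B_M$ would then express $Q'_M(x,y)$ as a generating function for bases weighted by activity.

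Finally I would carry out the algebraic match. Substituting $X = (x+y-1)/y$ and $Y = (x+y-1)/x$ into~(\ref{eq:act}), so that $X-1 = (x-1)/y$ and $Y-1 = (y-1)/x$, the right-hand side of the claimed identity collapses to the same sum $\sum_B x^{|E|-r-b(B)}y^{r-a(B)}(x+y-1)^{a(B)+b(B)-1}$. (This expression is automatically polynomial, since $a(B)+b(B)\ge 1$ for every basis: the minimum element of~$E$ is internally active if it lies in~$B$ and externally active otherwise.) The hard part will be the combinatorial identification in the second step: showing that the mixed subdivision's cells correspond to bases with shape data~$(X(B),Y(B))$ tracking activity, and that the half-open accounting produces precisely the $(x+y-1)^{a(B)+b(B)-1}$ factors. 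Once that geometric claim is in hand, the rest of the argument is algebraic bookkeeping.
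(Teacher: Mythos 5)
Your route is genuinely different from the paper's.  The paper proves Theorem~\ref{thm:T to Q} by pure generating-function manipulation starting from the corank--nullity form~\eqref{eq:Tutte}: each lattice point of $P(M)+i\Delta+j\nabla$ is bucketed by its support $S$ (the set of non-negative coordinates) and its number of cancellations $k$, the count is expressed via stars-and-bars, and then the double generating function in $v,w$ is summed and identified with the corank--nullity polynomial.  No polytopal subdivision appears.  What you propose --- a mixed subdivision, activity-shaped cells, and the activity form~\eqref{eq:act} --- is instead the machinery that the paper develops \emph{afterwards} in Sections~\ref{sec:coeffs} and~\ref{sec:kalman} for the coefficient interpretation and the K\'alm\'an specialisations, so the plan is not circular, but you would be proving a good part of Theorem~\ref{coeffs} and Theorem~\ref{thm:activity} along the way.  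Your target expression $\sum_B x^{|E|-r-b(B)}y^{r-a(B)}(x+y-1)^{a(B)+b(B)-1}$ is correct, and your remark that $a(B)+b(B)\ge 1$ (so the expression is polynomial) is right.

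There is, however, a concrete flaw in your cell description.  The top-degree cells of the regular mixed subdivision are not ``one per basis'': they are indexed by ordered partitions $(X,Y)$ of $E\setminus\{1\}$, giving $2^{|E|-1}$ cells, and a single basis $B$ appears as the middle summand of $2^{a(B)+b(B)-1}$ of them (the vertices of a cube in the face poset, per Proposition~\ref{prop:Dawson}).  So writing the maximal cells as $\{B\}+u\Delta_{X(B)}+t\nabla_{Y(B)}$ with $X(B),Y(B)$ determined by $B$ is wrong as stated; the association in the other direction (from $(X,Y)$ to $B$) is what Lemma~\ref{lem:B} produces, via a nontrivial exchange algorithm, and it is many-to-one.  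Your $(x+y-1)^{a(B)+b(B)-1}$ factor would have to emerge from half-open-accounting \emph{across} that whole $(a(B)+b(B)-1)$-dimensional cube of cells sharing the vertex ${\bf e}_B$, not from a single cell; this is exactly where the Dawson-partition structure enters, and it is not an automatic consequence of ``half-open versions of these cells tile the lattice points.''  You also need the coverage statement (Proposition~\ref{prop:top degree} / Claim~\ref{claim5}) --- that every lattice point of $P(M)+u\Delta+t\nabla$ lies in some top-degree cell at all --- which is specific to matroids (it fails for polymatroids, as the paper's Figure~\ref{fig:2} shows) and whose proof is itself a careful exchange argument.  Finally, the subdivision is not induced by ``a generic linear functional on $\mathbb R^E$'' but by a lift with two separate weight sequences $\alpha_1<\cdots<\alpha_n$ and $\beta_1<\cdots<\beta_n$ on the $\Delta$ and $\nabla$ summands; a single linear functional would merely select a face, not produce the mixed subdivision.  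None of these issues is unfixable, but together they are the bulk of the proof, and the sketch as written does not yet contain them.
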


An observation is in order before we embark on the proof.
Since lattice points and their enumeration are our foremost concerns in this work,
we prefer not to have to think of the points of our polyhedra with non-integral coordinates.
It is the following lemma that lets us get away with this.
\begin{lemma}[{\cite[Corollary 46.2c]{Schrijver}}]\label{712}
Let $P$ and $Q$ be generalised permutohedra whose vertices are lattice points. 
Then if $x\in P+Q$ is a lattice point, there exist lattice points $p\in P$ and $q\in Q$ such that $x=p+q$.
\end{lemma}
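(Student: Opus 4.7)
The plan is to reduce the statement to a fact about base polytopes of integer submodular functions on $E$, and then obtain the integer decomposition via total dual integrality of the combined submodular system (or, equivalently, via an exchange-augmentation argument).

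First, I would recall the characterisation of generalised permutohedra with lattice vertices as (translates by lattice vectors of) base polytopes
\[B(f) := \{y \in \mathbb R^E : y \cdot \mathbf e_E = f(E),\ y \cdot \mathbf e_S \le f(S) \text{ for all } S \subseteq E\}\]
of integer submodular functions $f$. Since lattice translations preserve the statement, I may assume $P = B(f)$ and $Q = B(g)$ in this form. The standard submodular identity $B(f) + B(g) = B(f+g)$, where $f+g$ is again integer submodular, identifies $P+Q$ with $B(f+g)$; hence the hypothesis $x \in (P+Q) \cap \mathbb Z^E$ unpacks as the integer inequalities $x \cdot \mathbf e_E = (f+g)(E)$ and $x \cdot \mathbf e_S \le (f+g)(S)$ for all $S \subseteq E$.

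Next, I would produce the decomposition. The cleanest route is to observe that the decomposition polytope $D := \{(p,q) \in B(f) \times B(g) : p + q = x\}$ is cut out by a system of linear inequalities (the submodular inequalities for $f$ on $p$, for $g$ on $q$, together with the linear equation $p+q=x$) which is totally dual integral by Edmonds' theorem on submodular polyhedra. Consequently $D$ has integer vertices, and any such vertex supplies the required lattice points $p \in P$ and $q \in Q$. A more hands-on alternative is an exchange-augmentation argument: starting from any real decomposition $x = p+q$ with $p_i \notin \mathbb Z$ for some $i$ (so that $q_i \notin \mathbb Z$ as well), one seeks an index $j$ and $\epsilon > 0$ such that simultaneously $p + \epsilon(\mathbf e_j - \mathbf e_i) \in B(f)$ and $q - \epsilon(\mathbf e_j - \mathbf e_i) \in B(g)$; iterating shrinks the fractional parts to zero.

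The main obstacle is verifying the existence of the simultaneous exchange index $j$ at each step. This reduces to a no-crossing-tight-sets argument that combines submodularity of $f$ at $p$ with that of $g$ at $q$: the family of $j$'s allowing the move in $B(f)$ alone is nonempty by the base exchange axiom, and one shows by an uncrossing of tight sets across the two submodular functions that the intersection with the family of $j$'s allowed by $B(g)$ is also nonempty. This is the same combinatorial ingredient that underlies Edmonds' integer decomposition theorems, so once the reduction to submodular base polytopes is in place the conclusion is standard.
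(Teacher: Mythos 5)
The paper itself does not prove this lemma: it is quoted from Schrijver (Corollary 46.2c), with a passing remark that for the particular sums used later (a matroid polytope plus translates of simplices) it could instead be deduced from Edmonds' matroid partition theorem. Your main argument is essentially a reconstruction of the proof behind the cited result, and it is sound: after translating, write $P=B(f)$, $Q=B(g)$ with $f,g$ integer submodular, use $B(f)+B(g)=B(f+g)$, and note that the set of decompositions of $x$ becomes, after the substitution $q=x-p$, the intersection $B(f)\cap(x-B(g))$; since $x-B(g)$ is again the base polytope of the integer submodular function $T\mapsto x\cdot\mathbf e_T+g(E\setminus T)-g(E)$, Edmonds' polymatroid intersection theorem (equivalently, total dual integrality of the union of the two submodular systems) gives that this intersection is an integer polytope, and it is nonempty because a real decomposition of $x$ exists. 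I would phrase it via this substitution rather than asserting TDI of the system in the variables $(p,q)$ together with the equation $p+q=x$: that assertion needs justification as literally stated, whereas the substitution reduces everything to the standard two-submodular-systems theorem, and the integral affine isomorphism $p\mapsto(p,x-p)$ transports integrality back to your polytope $D$.

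One concrete warning about your ``hands-on alternative'': the simultaneous single-swap augmentation can stall at fractional decompositions, so the exchange index $j$ you ask for need not exist. For example, on $E=\{1,2,3,4\}$ let $f$ and $g$ be the rank functions of the partition matroids with parts $\{1,2\},\{3,4\}$ and $\{1,4\},\{2,3\}$ respectively, and let $x=(1,1,1,1)$. At $p=q=(\tfrac12,\tfrac12,\tfrac12,\tfrac12)$ the only directions that keep $p\in B(f)$ and $x-p\in B(g)$ simultaneously are $\pm(\mathbf e_1-\mathbf e_2+\mathbf e_3-\mathbf e_4)$, so no move of the form $\epsilon(\mathbf e_j-\mathbf e_i)$ is available; one needs alternating (composite) exchanges, i.e.\ the full matroid-intersection-style uncrossing, which is exactly what the TDI route packages for you. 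Since that route is your primary argument, the proposal as a whole is correct; just drop or repair the single-swap variant.
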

By repeated use of the lemma, if $q\in u\Delta_{X}+{\bf e}_B+t\nabla_{Y}$ is a lattice point,
then $q$ has an expression of the form
\[
q = {\bf e}_B+{\bf e}_{i_1}+\cdots+{\bf e}_{i_t}-{\bf e}_{j_1}-\cdots-{\bf e}_{j_u}.
\]
Since all of the summands in $u\Delta_{X}+{\bf e}_B+t\nabla_{Y}$ are translates of matroid polytopes,
where the scalings are treated as repeated Minkowski sums,
this can also be proved using the matroid partition theorem, as laid out by Edmonds \cite{edmondspartition}.

\begin{proof}[Proof of Theorem~\ref{thm:T to Q}]
Let $q=e_B+{\bf e}_{x_1}+\cdots+{\bf e}_{x_i}-{\bf e}_{y_1}-\cdots -{\bf e}_{y_j}$ be a point in $P(M)+i\Delta+j\nabla$, where ${\bf e}_B\in P(M)$,
and Lemma~\ref{712} guarantees the existence of some such expression for any~$q$. 
We say that the expression for $q$ has a \emph{cancellation} if ${x_k}={y_l}$ for some $k,l$. Let $k$ be the number of cancellations in the expression for $q$, allowing a summand to appear in only one cancellation. For instance, if ${x_k}={y_l}={y_m}$ is the complete set of equalities, there is one cancellation, while ${x_k}={x_n}={y_l}={y_m}$ would give two. We will partition the set of lattice points of $P(M)+i\del+j\nab$ according to how many coordinates are non-negative, and then construct $Q$ by counting the lattice points in each part of the partition. Given a lattice point $q$, let $S=\{1\leq i\leq n \ | \ q_i> 0\}$. In order to construct ${\bf e}_B$ from ${\bf e}_S$ we need to use $|S|-r(S)$ of the $\del$ summands: as $B$ is spanning, we must have used a $\del$-summand every time $|S|$ rises above $r(S)$. Similarly, we must use $r(M)-r(S)$ $\nab$-summands to account for any fall in rank. Now, we will set the remaining $\del$-summands equal to those coordinates already positive, that is, set them equal to indicator vectors of elements of $S$. The remaining $\nab$-summands we will set equal to indicator vectors of elements not in $S$. 

We will ensure, through choice of $B$ and $k$, that this is the largest $q$ (in terms of sum of coordinates) we can find given $i$ and $j$. There are two ways the expression can fail to be maximal in this sense:
\begin{itemize}
\itemsep0em
\item when we decrease $k$, we could construct $q$ using fewer $\del$ and $\nab$ summands, and
\item if we write $q$ using $B'$ where we have summands ${\bf e}_a-{\bf e}_b$ such that $(B'\cup a)-b$ is a valid basis exchange, we would again be able to construct $q$ using fewer summands. 
\end{itemize}

We will choose $B$ in the expression for $q$ and the maximal $k$ so that describing all lattice points $q$ can be done uniquely in the way described.

Now we have that $|S|$ is the number of non-negative coordinates in at least one point of $P(M)+i\Delta+j\nabla$, and all our positive summands of such a point are assigned to such coordinates. The sum of these summands must be $r(M)+i-k-|S|=i-k-\operatorname{null}(S)$. If we ensure that $|E-S|$ is the number of negative integers in the respective points of $P(M)+i\Delta+j\nabla$, summing over these sets $S$ will give a count of all lattice points.  By the reasoning above, we must have that the $|E-S|$ non-negative integers sum to $j-k-r(M)+r(S)=j-k-\operatorname{cork}(S)$. Thus,
$$ \#(P(M)+i\Delta+j\nabla)=\sum_S \sum_k[ \#(|S| \ \text{non-negative integers summing to} \ i-k-\operatorname{null}(S))$$
$$\qquad\qquad\qquad\qquad\times\#(|E-S| \ \text{non-negative integers summing to} \ j-k-\operatorname{cork}(S))]$$
which is
\begin{multline}
\label{points} \#(P(M)+i\Delta+j\nabla)=\sum_S \sum_k \binom{i-k+|S|-\operatorname{null}(S)-1}{|S|-1}\\\qquad\qquad\times\binom{j-k+|E-S|-\operatorname{cork}(S)-1}{|E-S|-1}.
\end{multline}


Now form the generating function $$\sum_{i,j} \#(P(M)+i\Delta+j\nabla)v^iw^j=\sum_i \sum_j \#(P(M)+i\Delta+j\nabla)v^iw^j.$$ 
Substituting Equation \ref{points} into the generating function gives
\begin{align*}
\sum_i \sum_j\sum_S \sum_{k\geq 0} &\binom{i-k+|S|-\operatorname{null}(S)-1}{|S|-1}v^{i-k}\\
&\times\binom{j-k+|E-S|-\operatorname{cork}(S)-1}{|E-S|-1}w^{j-k}(vw)^k.
\end{align*}
Using the identity $\sum\limits_i\binom{i+a}{b}x^i=\dfrac{x^{b+a}}{(1-x)^{b+1}}$ simplifies this to
$$\sum_S \sum_k \dfrac{v^{\nul(S)}}{(1-v)^{|S|}}\cdot\dfrac{w^{\cork(S)}}{(1-w)^{|E-S|}}\cdot (vw)^k$$
which we can write as
$$\sum_S \dfrac{v^{\nul(S)}}{(1-v)^{\nul(S)-\cork(S)+r(M)}}\cdot\dfrac{w^{\cork(S)}}{(1-w)^{\cork(S)-\nul(S)+|E|-r(M)}}\cdot\sum_k(vw)^k.$$
Collecting like exponents, we end up with
\begin{align}
\sum_{i,j} \#(P(M)+i\Delta+j\nabla)v^iw^j &=\dfrac{1}{1-vw}\cdot\dfrac{1}{(1-v)^{r(M)}(1-w)^{|E|-r(M)}}\nonumber\\&\qquad\qquad\times\sum_S \left(\dfrac{v(1-w)}{1-v}\right)^{\nul(S)}\left(\dfrac{w(1-v)}{1-w}\right)^{\cork(S)} \nonumber\\
&= \dfrac{1}{1-vw}\cdot\dfrac{1}{(1-v)^{r(M)}(1-w)^{|E|-r(M)}}\nonumber\\&\qquad\qquad\times T\left(\dfrac{w(1-v)}{1-w}+1,\dfrac{v(1-w)}{1-v}+1\right) \nonumber\\
& = \dfrac{1}{1-vw}\cdot\dfrac{1}{(1-v)^{r(M)}(1-w)^{|E|-r(M)}}\nonumber \\ &\qquad\quad\qquad\times T_M\left(\dfrac{1-vw}{1-w},\dfrac{1-vw)}{1-v}\right)
\end{align}
where $T_M$ is the Tutte polynomial of $M$. Now it remains to be shown that the left-hand side contains an evaluation of our polynomial $Q'_M$. 
Using our original definition of $Q_M$, Equation \eqref{qdef}, we have that
\begin{align*}
\sum_{i,j} \#(P(M)+i\Delta+j\nabla)v^iw^j &= \sum_{i,j,k,l}c_{kl}\binom{i}{l}\binom{j}{k}v^iw^j \\
&= \sum_{k,l} c_{kl}\cdot\dfrac{v^l}{(1-v)^{l+1}}\cdot\dfrac{w^k}{(1-w)^{k+1}}.
\end{align*}
If we let $\dfrac{w}{1-w}=x-1$ and $\dfrac{v}{1-v}=y-1$, then
\begin{align*}
\sum_{i,j} \#(P(M)+i\Delta+j\nabla)v^iw^j &= \sum_{k,l} c_{kl}\cdot\dfrac{v^l}{(1-v)^{l+1}}\cdot\dfrac{w^k}{(1-w)^{k+1}} \\
&= (1-v)(1-w)\sum_{k,l}c_{kl}(x-1)^k(y-1)^l\\
&= (1-v)(1-w)Q_M'(x,y).
\end{align*}
So, from Equation (5.8.2), we have that
\begin{align*}
(1-v)(1-w)Q'_M(x,y)=\dfrac{1}{1-vw}\cdot\dfrac{1}{(1-v)^{r(M)}(1-w)^{|E|-r(M)}}\cdot T_M\left(\dfrac{1-vw}{1-w},\dfrac{1-vw}{1-v}\right).
\end{align*}
Solving for $w$ and $v$ in terms of $x$ and $y$ gives that $w=\dfrac{x-1}{x},v=\dfrac{y-1}{y}$. Substitute these into the above equation to get 
\begin{equation}
Q_M'(x,y)=\dfrac{x^{|E|-r(M)}y^{r(M)}}{x+y-1}\cdot T_M\left(\dfrac{x+y-1}{y},\dfrac{x+y-1}{x}\right).
\end{equation}
\end{proof}

We can invert this formula by setting $x'=\dfrac{x+y-1}{y},y'=\dfrac{x+y-1}{x}$, rearranging, and then relabelling. 
\begin{thm}\label{thm:Q to T}
Let $M=(E,r)$ be a matroid. Then
\begin{equation}
T_M(x,y)=-\,\dfrac{(xy-x-y)^{|E|-1}}{(-y)^{r(M)-1}(-x)^{|E|-r(M)-1}}\cdot Q_M'(\dfrac{-x}{xy-x-y},\dfrac{-y}{xy-x-y})
\end{equation}
\end{thm}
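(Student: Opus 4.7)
My plan is to derive Theorem~\ref{thm:Q to T} from Theorem~\ref{thm:T to Q} purely by algebraic inversion, following the hint given just before the statement. The identity in Theorem~\ref{thm:T to Q} is a rational identity between $Q'_M$ and the composition of $T_M$ with the change of variables $(x,y)\mapsto\bigl((x+y-1)/y,\,(x+y-1)/x\bigr)$, so the content of Theorem~\ref{thm:Q to T} is nothing more than solving this identity for $T_M$.

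Concretely, I would introduce auxiliary variables $x'=(x+y-1)/y$ and $y'=(x+y-1)/x$ and first invert this map. From $x'y=x+y-1=y'x$ one obtains a small linear system in $x,y$, whose solution is
\[
x \;=\; \dfrac{-x'}{x'y'-x'-y'}, \qquad y \;=\; \dfrac{-y'}{x'y'-x'-y'}.
\]
The two ingredients I will then need for the inversion are compact expressions for $x+y-1$ and $xy$ in terms of the primed variables. A direct computation gives $x+y-1=-xy\,(x'y'-x'-y')$ and hence $xy=x'y'/(x'y'-x'-y')^{2}$.

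Next, I rewrite Theorem~\ref{thm:T to Q} as
\[
T_M\!\left(\tfrac{x+y-1}{y},\,\tfrac{x+y-1}{x}\right) \;=\; \dfrac{(x+y-1)\,Q'_M(x,y)}{x^{|E|-r(M)}\,y^{r(M)}}
\]
and substitute the formulas above for $x$, $y$, $x+y-1$, and $xy$. The factor $x^{|E|-r(M)}y^{r(M)}$ produces a denominator $(x'y'-x'-y')^{|E|}$ together with the signed monomial $(-x')^{|E|-r(M)}(-y')^{r(M)}$; combining this with the factor $x+y-1=-xy(x'y'-x'-y')$ in the numerator and with $xy=x'y'/(x'y'-x'-y')^{2}$ gives, after collecting powers of $(x'y'-x'-y')$ and signs, an identity for $T_M(x',y')$ of exactly the shape claimed in Theorem~\ref{thm:Q to T}. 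Relabelling $(x',y')\mapsto(x,y)$ finishes the proof.

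The only real obstacle is bookkeeping: one must check that the sign $(-1)^{|E|+1}$ arising from simplifying $-x'y'/[(-x')^{|E|-r(M)}(-y')^{r(M)}]$ matches the overall minus sign in the statement once one writes $(-x)^{|E|-r(M)-1}(-y)^{r(M)-1}=(-1)^{|E|}x^{|E|-r(M)-1}y^{r(M)-1}$, and that the exponent $|E|-1$ on $(xy-x-y)$ is what remains after cancelling $(x'y'-x'-y')^{2}$ from $xy$ against the $(x'y'-x'-y')^{|E|+1}$ in the numerator. No matroidal input is used beyond Theorem~\ref{thm:T to Q}; in particular the argument works verbatim for any polynomial satisfying that relation.
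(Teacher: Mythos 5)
Your proposal is correct and is exactly what the paper does: the paper's "proof" of Theorem~\ref{thm:Q to T} is the single sentence preceding it, asserting that one inverts Theorem~\ref{thm:T to Q} by setting $x'=(x+y-1)/y$, $y'=(x+y-1)/x$, rearranging, and relabelling, and your write-up just supplies the bookkeeping (the inverse map $x=-x'/(x'y'-x'-y')$, $y=-y'/(x'y'-x'-y')$, the identity $x+y-1=-x'y'/(x'y'-x'-y')$, and the sign and exponent cancellations) that the paper leaves implicit.
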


We conjecture that there is a relationship between
our formula for the Tutte polynomial and
the algebro-geometric formula for the Tutte polynomial in \cite{speyer}.
The computations on the Grassmannian in that work are done 
in terms of~$P(M)$, the moment polytope of a certain torus orbit closure,
and $\Delta$ and $\nabla$ are the moment polytopes of the 
two dual copies of~$\mathbb P^{n-1}$, the $K$-theory ring of whose product
$\mathbb Z[x,y]/(x^n,y^n)$ is identified with the ambient ring of the Tutte polynomial.

\begin{ex}\label{ex:1}
Let $M$ be the matroid on ground set $[3]=\{1,2,3\}$ with
$\mathcal B_M = \{\{1\},\{2\}\}$.
When $u=2$ and $t=1$, the sum $P(M)+u\Delta+t\nabla $ is the polytope of Figure~\ref{fig:1}, with 16 lattice points.
\begin{figure}
\begin{center}
\includegraphics[scale=1]{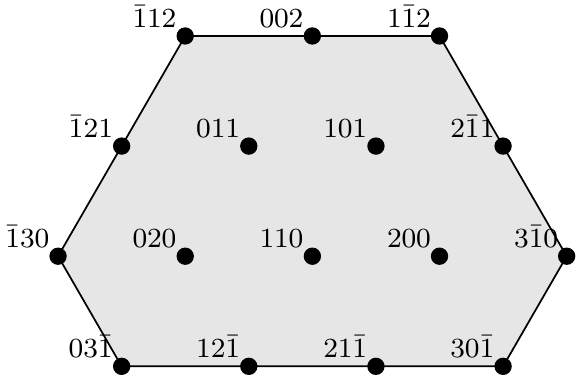}
\end{center}
\caption{The polytope $P(M)+u\Delta+t\nabla $ of Example~\ref{ex:1}.
The coordinates are written without parentheses or commas,
and $\bar1$ means $-1$.}\label{fig:1}
\end{figure}

To compute $Q_M(x,y)$, it is enough to count the lattice points in
$P(M)+u\Delta+t\nabla $ for a range of $u$ and $t$, and interpolate. 
Since $Q_M$ is a polynomial of degree~2, it is sufficient to take
$t$ and $u$ nonnegative integers with sum at most~2.
These are the black entries in the table below:
\begin{center}
\begin{tabular}{l|ccc}
$t$ $\setminus$ $u$ & 0 & 1 & 2 \\\hline
0 & 2 & 5 & 9 \\
1 & 5 & 10 & \textcolor{Gray}{16} \\
2 & 9 & \textcolor{Gray}{16} & \textcolor{Gray}{24}
\end{tabular}
\end{center}
The unique degree~$\leq2$ polynomial with these evaluations is
\[Q_M(t,u) = \binom t2 + 2tu + \binom u2 + 3t + 3u + 2,\]
so
\begin{align*}
Q'_M(x,y) &= (x-1)^2 + 2(x-1)(y-1) + (y-1)^2 + 3(x-1) + 3(y-1) + 2
\\&= x^2 + 2xy + y^2 - x - y.
\end{align*}
Finally, by Theorem~\ref{thm:Q to T},
\begin{align*}
T_M(x,y) &= -\,\frac{(xy-x-y)^2}{(-y)^0(-x)^1}\cdot
\left(\frac{y^2 + 2xy + x^2}{(xy-x-y)^2}+\frac{y + x}{xy-x-y}\right)
\\&= xy + y^2
\end{align*}
which is indeed the Tutte polynomial of~$M$.
\end{ex}

Given Theorem \ref{thm:Q to T}, we can now prove Theorem \ref{lpg}.

\lpg*

\begin{proof}
Consider the power series $\Sigma:=\sum\limits_{u,t\geq 0}Q_M(t,u)\,a^tb^u$. Note that 
$$\sum_{u,t\geq 0}\binom{t}{i}\binom{u}{j}\,a^tb^u=\frac{1}{ab}\cdot\left(\dfrac{a}{1-a}\right)^{i+1}\left(\dfrac{b}{1-b}\right)^{j+1}.$$ 
We can thus write $\Sigma$ as 
$$\frac{1}{ab}\sum_{i,j}c_{ij}\left(\dfrac{a}{1-a}\right)^{i+1}\left(\dfrac{b}{1-b}\right)^{j+1}.$$ 
Substituting $a=(v-1)/v$ and $b=(w-1)/w$ turns this into 
\begin{align*}
\Sigma &= \dfrac{vw}{(v-1)(w-1)}\sum_{i,j}c_{ij}(v-1)^{i+1}(w-1)^{j+1}
\\ &= vw \sum_{i,j}c_{ij}(v-1)^i(w-1)^j
\\ &= vw\, Q_M'(v,w).
\end{align*}
We can now apply Theorem \ref{thm:T to Q}:
\begin{align*}
&\mathrel{\phantom=}\sum_{u,t\geq 0} Q_M(t,u)\left(\dfrac{v-1}{v}\right)^t\left(\dfrac{w-1}{w}\right)^u \\
&= vw\, Q'_M(v,w)\\
&= \dfrac{v^{|E|-r(M)+1}w^{r(M)+1}}{v+w-1}\cdot T_M\left(\dfrac{v+w-1}{w},\dfrac{v+w-1}{v}\right).\\
\end{align*}
Substitute $v=-x/(xy-x-y)$ and $w=-y/(xy-x-y)$ to get the stated result.
\end{proof}

A further substitution and simple rearrangement gives the following corollary, included for the sake of completeness.

\begin{cor}
Let $M=(E,r)$ be a matroid. Then
\[\sum_{u,t\geq 0}Q_M(t,u)v^tw^u=\dfrac{1}{(1-v)^{|E|-r(M)}(1-w)^{r(M)}(1-vw)}\cdot T_M\left(\dfrac{1-vw}{1-v},\dfrac{1-vw}{1-w}\right).\]
\end{cor}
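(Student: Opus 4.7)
My plan is to derive the corollary by a change of variables applied to the intermediate identity established inside the proof of Theorem~\ref{lpg}. After invoking Theorem~\ref{thm:T to Q}, that proof produces
$$\sum_{u,t\geq 0} Q_M(t,u)\left(\dfrac{v-1}{v}\right)^{\!t}\!\left(\dfrac{w-1}{w}\right)^{\!u} = \dfrac{v^{|E|-r(M)+1}w^{r(M)+1}}{v+w-1}\cdot T_M\!\left(\dfrac{v+w-1}{w},\dfrac{v+w-1}{v}\right).$$
To turn the left-hand side into $\sum Q_M(t,u)\,v^t w^u$, I perform the substitution $v\mapsto 1/(1-v)$ and $w\mapsto 1/(1-w)$ (after renaming the old dummy variables), which is the unique substitution under which $(v-1)/v$ gets sent to $v$ and $(w-1)/w$ to $w$. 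Applying this substitution to both sides reduces the corollary to algebraic simplification of the right-hand side.

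The key computation is
$$\dfrac{1}{1-v}+\dfrac{1}{1-w}-1 \;=\; \dfrac{1-vw}{(1-v)(1-w)},$$
from which $(v+w-1)/w$ becomes $(1-vw)/(1-v)$ and $(v+w-1)/v$ becomes $(1-vw)/(1-w)$; these are precisely the arguments of $T_M$ in the corollary. The prefactor simplifies to
$$\dfrac{(1-v)^{-(|E|-r(M)+1)}(1-w)^{-(r(M)+1)}}{(1-vw)/[(1-v)(1-w)]} \;=\; \dfrac{1}{(1-v)^{|E|-r(M)}(1-w)^{r(M)}(1-vw)},$$
which matches the stated prefactor exactly.

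There is no substantive obstacle: this really is the \emph{further substitution and simple rearrangement} advertised in the text. The only minor care is to check that the substitution is legitimate at the level of formal power series. This is fine because the intermediate identity holds as an equality of rational functions in $v,w$, and the prescribed change of variables is invertible around the origin (it corresponds to a series composition whose linear part is the identity), so equality persists after substitution.
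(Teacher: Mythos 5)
Your derivation is correct and matches the paper's intended route: you take the intermediate identity obtained inside the proof of Theorem~\ref{lpg} (equivalently, Theorem~\ref{thm:T to Q} recast via the generating function $\Sigma$), apply the invertible substitution $v\mapsto 1/(1-v)$, $w\mapsto 1/(1-w)$, and simplify, which is precisely the ``further substitution and simple rearrangement'' the text alludes to. The algebra all checks out, including the key identity $\tfrac{1}{1-v}+\tfrac{1}{1-w}-1 = \tfrac{1-vw}{(1-v)(1-w)}$ and the simplification of the prefactor.

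One small remark on the justification at the end: since $1/(1-v)$ has constant term $1$ rather than $0$, this is not literally a composition of formal power series in the usual sense. The cleanest way to see legitimacy is to note that $\Sigma(a,b)=\sum_{u,t\ge 0}Q_M(t,u)a^t b^u$ is a rational function (as $Q_M$ is a polynomial), so the identity in the proof of Theorem~\ref{lpg} is an identity of rational functions, and one is free to plug in $a=v$, $b=w$ directly; your substitution just implements this. The resulting equality of rational functions then yields the power series identity because the denominator $(1-v)^{|E|-r(M)}(1-w)^{r(M)}(1-vw)$ is a unit in $\mathbb{Q}[[v,w]]$.
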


Being a Tutte evaluation, $Q'$ must have a deletion-contraction recurrence.
We record the form it takes.

\begin{prop}
\label{recurrence}
Let $M=(E,r)$ be a matroid with $|E|=n$. Then, for $e\in E$,
\begin{enumerate}[i.]
\item $Q'_M(x,y)=xQ_{M\backslash e}(x,y)+yQ'_{M/e}(x,y)$ when $e$ is not a loop or coloop, and
\item $Q'_M(x,y)=(x+y-1)Q'_{M/e}(x,y)=(x+y-1)Q'_{M\backslash e}(x,y)$ otherwise.
\end{enumerate}
\end{prop}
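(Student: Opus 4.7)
My plan is to deduce the recurrence directly from Theorem~\ref{thm:T to Q}, which expresses $Q'_M$ as an explicit rational multiple of an evaluation of $T_M$. Since the Tutte polynomial itself obeys the classical deletion-contraction recurrence ($T_M = T_{M\setminus e}+T_{M/e}$ in the generic case, $T_M = xT_{M/e}$ for a coloop, $T_M = yT_{M\setminus e}$ for a loop), it suffices to track how the prefactor $x^{|E|-r(M)}y^{r(M)}/(x+y-1)$ and the arguments $(X,Y) := ((x+y-1)/y,\,(x+y-1)/x)$ behave under deletion and contraction.

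First I will set $X = (x+y-1)/y$ and $Y = (x+y-1)/x$, noting that these do not depend on $M$, so evaluating $T_{M\setminus e}$ and $T_{M/e}$ at $(X,Y)$ requires no rewriting. The only piece of bookkeeping is the prefactor. In case (i), when $e$ is neither a loop nor a coloop, we have $r(M\setminus e)=r(M)$ and $r(M/e)=r(M)-1$, while the ground set loses one element in each case, so the prefactors of $Q'_{M\setminus e}$ and $Q'_{M/e}$ are exactly $1/x$ and $1/y$ times that of $Q'_M$, respectively. Multiplying through by $x$ and $y$ and adding then yields
\begin{equation*}
xQ'_{M\setminus e}(x,y)+yQ'_{M/e}(x,y) = \frac{x^{|E|-r(M)}y^{r(M)}}{x+y-1}\bigl[T_{M\setminus e}(X,Y)+T_{M/e}(X,Y)\bigr],
\end{equation*}
and the bracket equals $T_M(X,Y)$ by the classical recurrence, giving (i).

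For case (ii), suppose $e$ is a coloop, so that $M\setminus e$ and $M/e$ coincide on the ground set $E\setminus e$ and $r(M/e)=r(M)-1$. Then $T_M(X,Y)=X\cdot T_{M/e}(X,Y)=\frac{x+y-1}{y}T_{M/e}(X,Y)$; substituting into Theorem~\ref{thm:T to Q} the factor of $(x+y-1)/y$ cancels the $(x+y-1)$ in the denominator and reduces one power of $y$, leaving exactly $(x+y-1)\,Q'_{M/e}(x,y)$. The loop case is symmetric: $T_M(X,Y)=Y\cdot T_{M\setminus e}(X,Y)=\frac{x+y-1}{x}T_{M\setminus e}(X,Y)$ and the analogous cancellation produces $(x+y-1)\,Q'_{M\setminus e}(x,y)$. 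Since for a loop $M/e=M\setminus e$ and for a coloop $M\setminus e=M/e$, both displayed expressions in (ii) are equal.

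I do not expect any serious obstacle: everything is a mechanical substitution into the formula of Theorem~\ref{thm:T to Q}. The only thing that could conceivably need care is verifying that the stated formula holds as an identity of polynomials rather than just of rational functions (the factor $x+y-1$ appears in denominators), but this is automatic because the Tutte polynomial clears it, as is already implicit in Theorem~\ref{thm:T to Q}.
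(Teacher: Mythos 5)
Your argument is correct, and for part (i) it is essentially the paper's proof: the paper writes the classical Tutte recurrence $T_M = T_{M\backslash e}+T_{M/e}$ and pushes it through the change of variables, exactly as you do (the paper happens to use the inverse formula, Theorem~\ref{thm:Q to T}, rather than Theorem~\ref{thm:T to Q}, but that is a cosmetic difference since the two are mutually inverse). The only genuine divergence is in part (ii): rather than substituting the coloop/loop evaluations $T_M = X\,T_{M/e}$, $T_M = Y\,T_{M\backslash e}$ into Theorem~\ref{thm:T to Q} as you do, the paper observes that a (co)loop $e$ gives a direct-sum decomposition $M = M_e\oplus M/e = M_e\oplus M\backslash e$ and invokes the multiplicativity $Q'_{M_1\oplus M_2}=(x+y-1)\,Q'_{M_1}Q'_{M_2}$ of Proposition~\ref{prop:direct sum}, together with $Q'_{M_e}=1$. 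Your route has the advantage of treating parts (i) and (ii) uniformly and staying entirely within Tutte-polynomial bookkeeping; the paper's route reuses a polymatroid-level structural fact (the direct-sum formula) that it needs anyway, which keeps the proof independent of the loop/coloop special cases of the Tutte recurrence. Both are sound, and your verification that the prefactors shift by exactly $1/x$ and $1/y$ under deletion and contraction, and your remark that the identity is one of polynomials because the Tutte polynomial clears the $x+y-1$, are correct and complete.
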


\begin{proof}Part \emph{ii} is a consequence of Proposition~\ref{prop:direct sum} below (which does not depend on the present section).
When $e$ is a (co)loop, $M = M_e\oplus M\backslash e = M_e\oplus M/e$,
where $M_e$ is the restriction of $M$ to~$\{e\}$ (or the equivalent contraction).

For part \emph{i}, recall that if $e$ is neither a loop nor a coloop, then $E(M\backslash e)=E-e=E(M/e)$, $r(M\backslash e)=r(M)$, and $r(M/e)=r(M)-1$. Take the equation $T_M(x,y)=T_{M\backslash e}(x,y)+T_{M/e}(x,y)$ and rewrite it in terms of $Q'$, as per Theorem~\ref{thm:Q to T}:
\begin{multline*}
-\dfrac{(xy-x-y)^{n-1}}{(-y)^{r(M)-1}(-x)^{n-r(M)-1}}\cdot Q'_M(x,y)= \\
-\dfrac{(xy-x-y)^{n-2}}{(-y)^{r(M)-1}(-x)^{n-r(M)-2}}\cdot Q'_{M\backslash e}(x,y) -\dfrac{(xy-x-y)^{n-2}}{(-y)^{r(M)-2}(-x)^{n-r(M)-1}}\cdot Q'_{M/e}(x,y)
\end{multline*}
Multiplying through by $-\dfrac{(-y)^{r(M)-1}(-x)^{n-r(M)-1}}{(xy-x-y)^{n-1}}$ gives the result.
\end{proof}

\section{Coefficients}
\label{sec:coeffs}
Some coefficients of the Tutte polynomial provide structural information about the matroid in question. Let $b_{i,j}$ be the coefficient of $x^iy^j$ in $T_M(x,y)$. The best-known case is that $M$ is connected only if $b_{1,0}$, known as the \emph{beta invariant}, is non-zero; moreover, $b_{1,0} = b_{0,1}$ when $|E|\geq 2$. 
Not every coefficient yields such an appealing result, though of course
they do count the bases with internal and external activity of fixed sizes. 
In like manner, 
we are able to provide a enumerative interpretation of the coefficients of $Q'_M(x,y)$, which is the focus of this section.

In order to do this, we will make use of a regular mixed subdivision $\mathcal{F}$ of $u\Delta +P(M)+t\nabla$. 
Let $\alpha_1<\cdots <\alpha_n$ and $\beta_1<\cdots <\beta_n$ be positive reals. 
Our regular subdivision will be that determined by 
projecting the ``lifted'' polytope 
\[\mathit{Lift}=\conv\{(u{\bf e}_i,\alpha_i)\}+(P(M)\times \{0\})+\conv\{(-t{\bf e}_i,\beta_i)\}\subseteq\mathbb R^E\times\mathbb R\]
to $\mathbb R^E$.
Let $\mathfrak{F}$ be the set of ``lower'' facets of~$\mathit{Lift}$ which maximise some linear function $\langle a,x\rangle$, where $a\in(\mathbb R^E\times\mathbb R)^*$ is a linear functional with last coordinate $a_{n+1}=-1$. 
For each face $F\in\mathfrak{F}$, let $\pi(F)$ be its projection back to $\mathbb{R}^n$. 
Now $\mathcal{F}:=\{\pi(F) \ | \ F\in\mathfrak{F}\}$ is a regular subdivision of $u\Delta+P(M)+t\nabla$.
We will write $\mathcal F$ as $\mathcal F(t,u)$ when we need to make the dependence on the parameters explicit.
Note however that the structure of the face poset of~$\mathcal{F}$ 
does not depend on $t$ and $u$ as long as these are positive. 

Since $\mathcal{F}$ is a mixed subdivision of $u\Delta +P(M)+t\nabla$, 
each of its cells bears a canonical decomposition as a Minkowski sum of 
a face of $u\Delta$, a face of $P(M)$, and a face of $t\nabla$.
When we name a face of $\mathcal F$ as a sum of three polytopes $F+G+H$, 
we mean to invoke this canonical decomposition.
These decompositions are compatible between faces: 
if $m_a(P)$ denotes the face of a polytope~$P$ on which a linear functional $a$ is maximised,
then the canonical decomposition for $m_a(F+G+H)$ is $m_a(F)+m_a(G)+m_a(H)$.

We now state the main result of this section:

\begin{thm}
\label{coeffs}
Take the regular mixed subdivision $\mathcal F$ of $u\Delta+P(M)+t\nabla$ as described above. 
The unsigned coefficient $|[x^iy^j]Q'_M|$ counts the cells $F+G+H$ of~$\mathcal F$ where $i=dim(F)$, $j=dim(H)$,
and $G$ is a vertex of $P(M)$ and there exists no cell $F+G'+H$ where $G'\supsetneq G$. 
\end{thm}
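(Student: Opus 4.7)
The plan is to compute $Q_M(t,u)$ cell by cell using $\mathcal F$, apply the change of variables defining $Q'_M$, and then collapse the resulting alternating sum via inclusion-exclusion over the face poset of $P(M)$.

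The starting point is the identity
$Q_M(t,u)=\sum_{C\in\mathcal F}\#\bigl(\mathrm{relint}(C)\cap\mathbb Z^E\bigr),$
which holds because the open cells of $\mathcal F$ partition $u\Delta+P(M)+t\nabla$. For a cell $C=F+G+H$ in its canonical mixed decomposition, the summand is computed via Lemma~\ref{712} applied iteratively (to get lattice decompositions $p=p_F+p_G+p_H$), together with the genericity of the lifts $\alpha_i,\beta_i$ (which forces $\dim F+\dim G+\dim H=\dim C$), and the matroid-theoretic observation that a face $G$ of $P(M)$ has only $0/1$ vertices as lattice points. For cells with $G$ a vertex this yields
$\#\bigl(\mathrm{relint}(C)\cap\mathbb Z^E\bigr)=\binom{u-1}{\dim F}\binom{t-1}{\dim H},$
while cells with $\dim G\ge 1$ contribute additional terms coming from lattice points whose forced decomposition places $p_G$ at a boundary vertex of $G$.

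Applying the change of variables by expanding $\binom{u-1}{a}=\sum_{j=0}^{a}(-1)^{a-j}\binom{u}{j}$ and $\binom{t-1}{c}=\sum_{i=0}^{c}(-1)^{c-i}\binom{t}{i}$, then substituting $\binom{u}{j}\mapsto(y-1)^j$ and $\binom{t}{i}\mapsto(x-1)^i$, each cell's contribution becomes a polynomial in $x,y$ with explicit signed binomial coefficients determined by $(\dim F,\dim H)$. Extracting $[x^iy^j]Q'_M$ then produces an alternating binomial sum indexed by vertex-$G$ cells (plus corrections from cells with higher-dimensional $G$).

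The final and most delicate step is to collapse this sum to the stated enumeration. Organising cells by their $(F,H)$-summand pair, the faces $G$ with $F+G+H\in\mathcal F$ form a subposet of the face lattice of $P(M)$; an inclusion-exclusion using Möbius values $(-1)^{\dim G'-\dim G}$ on polytope face intervals telescopes the signed contributions along chains $G\subsetneq G'\subsetneq\cdots$, leaving exactly the vertex-$G$ cells with no extension $F+G'+H\in\mathcal F$ having $G'\supsetneq G$. The principal obstacle is executing this Möbius cancellation rigorously: one must match the signs arising from the $\binom{u-1}{a}, \binom{t-1}{c}$ expansions with those coming from the face-lattice Möbius function, and carefully book-keep the correction contributions from cells with higher-dimensional $G$ so that they pair off with the vertex-$G$ cells they extend.
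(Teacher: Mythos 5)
Your proposal takes a genuinely different starting decomposition than the paper's, but it has two substantive gaps, one of which inverts the key geometric difficulty.

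The paper does not sum over relative interiors of cells of $\mathcal F$. Instead it (1) proves that every lattice point of $u\Delta+P(M)+t\nabla$ lies in some ``top degree face'' $u\Delta_X+{\bf e}_B+t\nabla_Y$ with $X\cup Y=E$, $|X\cap Y|=1$ (Proposition~\ref{prop:top degree}, which in turn rests on Claim~\ref{claim5} and the Algorithm~\ref{alg:1} analysis in Lemma~\ref{lem:B}); (2) shows, via Lemmas~\ref{meet} and~\ref{lattice}, that the poset of top degree faces and their nonempty intersections is a disjoint union of face posets of cubes, hence an Eulerian poset with $\mu(x,y)=(-1)^{r(y)-r(x)}$; (3) applies inclusion--exclusion over this nerve, computes full (not relative interior) lattice counts $\binom{t+i}{i}\binom{u+j}{j}$ for each face, and finishes with the binomial inversion $\binom{t}{i}=\sum_k(-1)^{i-k}\binom{i}{k}\binom{t+k}{k}$. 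Your route (partition by relative interiors, expand $\binom{u-1}{\dim F}\binom{t-1}{\dim H}$ against $\binom uj$, then cancel) is a legitimately different organisation of the count.

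The first gap is about your treatment of cells with $\dim G\ge 1$. You write that such cells ``contribute additional terms coming from lattice points whose forced decomposition places $p_G$ at a boundary vertex of $G$.'' This is not the right picture. Given Proposition~\ref{prop:top degree}, the relative interior of any cell whose $P(M)$-summand has positive dimension contains \emph{no} lattice points at all: a cell of $\mathcal F$ with positive-dimensional $G$ is not a face of any cell with $G$ a vertex (since Minkowski decompositions restrict to faces), so if its relative interior contained a lattice point $p$, then $p$ could not lie in any top degree face, contradicting Proposition~\ref{prop:top degree}. That proposition is the hard geometric input of the whole theorem and is proved by the algorithm of Lemma~\ref{lem:B}; your argument neither proves it nor cites a substitute, and describing these cells as contributing ``additional terms'' suggests you have not identified it as the crux.

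The second gap is the closing M\"obius cancellation. You assert that, for fixed $(F,H)$, the faces $G$ of $P(M)$ with $F+G+H\in\mathcal F$ form a sub-face-poset whose M\"obius function telescopes the signed contributions. But in a mixed subdivision it is not true that one can vary $G$ independently of $F$ and $H$: the faces of a cell $F+G+H$ of $\mathcal F$ are exactly the sums $m_a(F)+m_a(G)+m_a(H)$ over linear functionals $a$, so shrinking $G$ in general forces $F$ and $H$ to shrink too, and the family of $G$'s over a fixed $(F,H)$ need not be the faces of a single polytope. The structural fact that actually makes the signed collapse work is that the nerve of the top degree faces is a cubical complex (Lemmas~\ref{meet} and~\ref{lattice}, which also justify that every intersection of top degree faces again has the clean product form $u\Delta_X+{\bf e}_B+t\nabla_Y$); without this, the expression you obtain after expanding $\binom{u-1}{a}=\sum_j(-1)^{a-j}\binom uj$ over all vertex-$G$ cells is a sum of terms $(-1)^{a+b}\bigl(1-(1-y)^{a+1}\bigr)\bigl(1-(1-x)^{b+1}\bigr)/(xy)$ with no visible reason to reduce to the stated count, and the ``no extension'' condition in the theorem does not emerge from the argument as you have set it up.
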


The key fact in the proof is the following. 

\begin{dfn}
A maximal cell $F+G+H$ of the mixed subdivision $\mathcal F$ is a \emph{top degree face}
when $G$ is a vertex of $P(M)$.
\end{dfn}


\begin{prop}\label{prop:top degree}
In the subdivision $\mathcal{F}$, 
each of the lattice points of $u\Delta+P(M)+t\nabla$ lies in a top degree face.
\end{prop}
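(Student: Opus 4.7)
The approach is to use Lemma~\ref{712} to locate a basis of~$M$ inside~$q$, and then construct an explicit direction realising a top-degree maximal cell of~$\mathcal F$ that contains~$q$.

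Given a lattice point $q\in u\Delta+P(M)+t\nabla$, iterated application of Lemma~\ref{712} gives a decomposition $q=p_1+{\bf e}_B+p_3$, where $p_1$ and $p_3$ are lattice points of $u\Delta$ and $t\nabla$ respectively and ${\bf e}_B$ is the indicator vector of a basis $B$ of $M$ (for a matroid the lattice points of $P(M)$ are exactly the vertices). Such a decomposition can be adjusted so that $\operatorname{supp}(p_1)\cap\operatorname{supp}(p_3)=\emptyset$: whenever an index $i$ lies in both supports, a matroid base exchange replaces a paired $+{\bf e}_i/-{\bf e}_i$ contribution by another pair $+{\bf e}_k/-{\bf e}_m$ at the cost of moving $B$ to $B-k+m$; repeating yields a reduced decomposition with $|\operatorname{supp}(p_1)|+|\operatorname{supp}(p_3)|\le|E|$.

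With a reduced decomposition, the next step is to find a direction $a\in(\mathbb R^E)^\ast$ such that $\operatorname{supp}(p_1)\subseteq I(a):=\arg\max_i(ua_i-\alpha_i)$, $\operatorname{supp}(p_3)\subseteq J(a):=\arg\min_i(ta_i+\beta_i)$, $|I(a)|+|J(a)|=|E|+1$, and $a\in\operatorname{int}(N_{{\bf e}_B}(P(M)))$. The first two conditions ensure $p_1\in u\Delta_{I(a)}$ and $p_3\in t\nabla_{J(a)}$; the third makes $u\Delta_{I(a)}+{\bf e}_B+t\nabla_{J(a)}$ a maximal cell of $\mathcal F$ of the correct dimension $|E|-1$; and the fourth makes the $P(M)$-part equal to the vertex~${\bf e}_B$. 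Together these conditions yield a top-degree face containing~$q$. After the reduction, one may take $I$ and $J$ meeting in exactly one index~$i^*$, selected so that the tie equations $ua_i-\alpha_i=c_I$ on~$I$ and $ta_i+\beta_i=c_J$ on~$J$ form a consistent linear system.

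The main obstacle is showing that this linear system has a solution lying in the interior of the full-dimensional cone $N_{{\bf e}_B}(P(M))$: the strict monotonicity and positivity of the $\alpha_i,\beta_i$ cause the tie solutions to form a one-parameter family of directions~$a$, and one must verify that this family meets the open normal cone of~${\bf e}_B$ for at least one valid choice of the shared index~$i^*$. This reduces to a case analysis on the relative positions of $B$, $\operatorname{supp}(p_1)$, $\operatorname{supp}(p_3)$, and~$i^*$, and relies on matroid base exchange together with the genericity of the lifts.
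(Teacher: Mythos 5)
Your opening steps broadly track the paper's, but the argument is incomplete precisely at the point you flag as "the main obstacle," and that obstacle is the whole content of the proposition. Here are the specific issues.

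First, your reduction step is not enough. Removing overlaps between $\operatorname{supp}(p_1)$ and $\operatorname{supp}(p_3)$ (which the paper does more simply, by replacing a cancelling pair $+{\bf e}_i/-{\bf e}_i$ by $+{\bf e}_1/-{\bf e}_1$, without touching $B$) is only one of the obstructions. The paper's proof of Claim~\ref{claim5} also has to deal with summands of $p_1$ or $p_3$ that lie \emph{inside} $B$, and those repairs do change the basis: if some $e\in B$ appears as a $\nabla$-summand and admits an exchange $d<e$, one passes to $(B-e)\cup d$; dually for $\Delta$-summands. These basis moves are designed so that the resulting $B$ satisfies the activity postconditions of Algorithm~\ref{alg:1} for some partition $X'\sqcup Y'$, and termination is certified by a monotone decrease in the multiset of summand indices. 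Your reduction leaves $B$ essentially fixed (apart from an odd exchange whose purpose and termination are unclear), so there is no reason the $B$ you end up with is the unique basis, given by Lemma~\ref{lem:B}, for which $u\Delta_{1\cup X}+{\bf e}_B+t\nabla_{1\cup Y}$ is an actual cell of $\mathcal F$.

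Second, your final paragraph, which asserts that a suitable one-parameter family of functionals $a$ must meet $\operatorname{int}\,N_{{\bf e}_B}(P(M))$ "for at least one valid choice of $i^*$" and that "this reduces to a case analysis," is exactly the statement that needs proving; no proof is given. In fact, for a $B$ produced just by Lemma~\ref{712} plus support-disjointness, this will generally be false: there are decompositions whose basis is not the algorithm's output, hence whose normal cone avoids every admissible $a$. The paper avoids having to reason about normal cones at all by rewriting the decomposition combinatorially until it matches the algorithm's postconditions, at which point membership in a top-degree face follows from Lemma~\ref{lem:B} and the explicit height function. So the proposal identifies the right geometric picture but does not close the argument; the essential idea — a rewriting procedure that drives $B$ to the activity-correct basis — is missing.
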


To expose the combinatorial content of this proposition, we need to describe
the top degree faces more carefully. All top degree faces are of dimension $|E|-1$
and have the form $u\Delta_{X}+{\bf e}_B+t\nabla_{Y}$. 
By Lemma~\ref{712}, if $p\in u\Delta_{X}+{\bf e}_B+t\nabla_{Y}$ is a lattice point,
then $p$ has an expression of the form
\begin{equation}\label{eq:tdf sum}
    p = {\bf e}_B+{\bf e}_{i_1}+\cdots+{\bf e}_{i_t}-{\bf e}_{j_1}-\cdots-{\bf e}_{j_u}.
\end{equation}
The subdivision $\mathcal F$ determines a height function $h(x)$
on the lattice points $x$ of~$u\Delta +P(M)+t\nabla$,
where $h(x)$ is the minimum real number such that $(x,h(x))\in\mathit{Lift}$.
This height function is 
\[h(x):=\text{min}\{\alpha_{i_1}+\cdots+\alpha_{i_t}+\beta_{j_1}+\cdots+\beta_{j_u} 
\ | \ x-{\bf e}_{i_1}-\cdots-{\bf e}_{i_t}+{\bf e}_{j_1}+\cdots+{\bf e}_{j_u}\in \mathcal B_M\}.\]
If $x$ is a lattice point of a top-degree face then choosing the $i_k$ and $j_l$ in accord with \eqref{eq:tdf sum} achieves the minimum.

Let $\Pi=u\Delta_{X}+{\bf e}_B+t\nabla_{Y}$ be a top-degree face.
If $i$ and $j$ were distinct elements of $X\cap Y$,
then $\Pi$ would have edges of the form $\conv\{x,x+k({\bf e}_i-{\bf e}_j)\}$ whose preimages in the corresponding lower face of $\mathit{Lift}$ were not edges, 
since they would contain the sum of the nonparallel segments 
$\conv\{(u{\bf e}_i,\alpha_i),(u{\bf e}_j,\alpha_j)\}$ and $\conv\{(-t{\bf e}_i,\beta_i),(-t{\bf e}_j,\beta_j)\}$.
Therefore we must have $|X\cap Y|\leq1$.
Together with the fact that the dimensions of $\Delta_{X}$ and~$\nabla_{Y}$ sum to $|E|-1$, 
this implies that $X\cup Y=E$ and $|X\cap Y|=1$.  In fact 
the conditions on the $\alpha$ and $\beta$ imply that $X\cap Y=\{1\}$,
because replacing equal subscripts $i_k=j_l>1$ by~$1$ in the definition of $h(x)$ decreases the right hand side.

We thus potentially have $2^{|E|-1}$ top degree faces, one for each remaining valid choice of $X$ and $Y$ -- each element except $1$ is either in $X$ but not $Y$, or it is in $Y$ but not $X$.
In fact, all $2^{|E|-1}$ of these do appear in~$\mathcal{F}$.

\begin{lemma}\label{lem:B}
Take subsets $X$ and $Y$ of~$E$ with $X\cup Y=E$ and $X\cap Y=\{1\}$.
There is a unique basis $B$ such that 
$u\Delta_{X}+{\bf e}_B+t\nabla_{Y}$ is a top-degree face. 
It is the unique basis $B$ such that
no elements of $X$ are externally inactive and no elements of $Y$ are internally inactive with respect to $B$, with reversed order on $E$.
\end{lemma}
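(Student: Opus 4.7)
My plan is to work with the regular subdivision machinery. Every lower facet of $\mathit{Lift}$ is the locus where a linear functional $(a,-1)\in(\mathbb R^E\oplus\mathbb R)^*$ attains its maximum, and since $\mathit{Lift}$ is a Minkowski sum of three pieces, this maximum face is itself the Minkowski sum of the maximum faces selected on each piece. Thus the top-degree face $u\Delta_X+\mathbf e_B+t\nabla_Y$ arises from some functional $(a,-1)$ exactly when $ua_i-\alpha_i$ is maximized on $X$, $-ta_i-\beta_i$ is maximized on $Y$, and $\langle a,\cdot\rangle$ is uniquely maximized at $\mathbf e_B$ over $\mathcal B_M$.

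My first step is to show that the first and third conditions pin down $a$ up to an overall additive constant. The $\Delta$-condition forces $a_i=(c_X+\alpha_i)/u$ for $i\in X$; the $\nabla$-condition forces $a_i=-(c_Y+\beta_i)/t$ for $i\in Y$; and agreement at the shared element $1\in X\cap Y$ produces one linear relation between $c_X$ and $c_Y$. The leftover one-parameter family is translation of $a$ by scalar multiples of $\mathbf 1$, which does not affect the maximizer on $P(M)$ since every basis has the same coordinate sum $r(M)$. Hence $(X,Y)$ essentially determines $a$; the genericity of $\alpha_i$ and $\beta_i$ (for which the earlier discussion around $X\cap Y=\{1\}$ is a prototype) then makes all the $a_i$ distinct, so that the maximizer $B$ on $P(M)$ exists and is unique, giving the existence and uniqueness half of the lemma.

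To finish, I would translate optimality of $B$ into activity. By the polymatroid base exchange property, $B$ uniquely maximizes $\langle a,\cdot\rangle$ over $\mathcal B_M$ if and only if every transfer $u_1\to u_2$ possible in $B$ satisfies $a_{u_1}>a_{u_2}$. Reading off from the explicit formula: within $X$ the value $a_i$ is strictly increasing in the original index of $E$; within $Y$ it is strictly decreasing in the original index; and elements of $X\setminus\{1\}$ dominate those of $Y\setminus\{1\}$ in $a$-value, with $a_1$ strictly between. Under the reversed order on $E$, this means that within $X$ the $a$-values are strictly decreasing, within $Y$ they are strictly increasing, while the dominance between $X\setminus\{1\}$ and $Y\setminus\{1\}$ is order-independent. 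A case analysis on the positions of $u_1,u_2$ among $X\setminus\{1\}$, $\{1\}$, and $Y\setminus\{1\}$ then reduces the strict inequalities $a_{u_1}>a_{u_2}$ to the two stated conditions: no $u\in X$ admits a transfer $v\to u$ from a smaller-in-reversed-order $v$, and no $u\in Y$ admits a transfer $u\to v$ to a smaller-in-reversed-order $v$. The main obstacle I anticipate is the bookkeeping of this final translation, in particular handling the distinguished element $1$ correctly, since it straddles the boundary between the high-$a$ regime ($X\setminus\{1\}$) and the low-$a$ regime ($Y\setminus\{1\}$) and is the one element where the $X$- and $Y$-formulas for $a$ must reconcile.
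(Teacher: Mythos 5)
Your approach is genuinely different from the paper's, and arguably cleaner. You read off the functional $(a,-1)$ selecting the lower facet, pin $a$ down modulo translation by $\mathbf 1$ from the $\Delta$- and $\nabla$-conditions, and then use the standard fact that on a matroid polytope a generic linear functional is uniquely maximised at a vertex, whose neighbours are exactly the single basis exchanges. The paper instead constructs a combinatorial pivot algorithm (Algorithm~\ref{alg:1}), proves that each move strictly increases a potential $\sum_{a\in B}\gamma_a$ (where $\gamma$ is essentially your $a$), and then establishes termination and independence of the starting basis via the theory of delta-matroid twists and a theorem of Bouchet. Your route is shorter and more conceptual; what the paper's route buys in exchange is an explicit exchange algorithm that computes $B$.

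There is, however, a direction error in your final translation that you should resolve before trusting it. You correctly note that, with $\alpha_1<\cdots<\alpha_n$ and $\beta_1<\cdots<\beta_n$ and the normalisation $a_1=0$, the value $a_i=(\alpha_i-\alpha_1)/u$ is \emph{increasing} in the original index on $X$ and $a_i=-(\beta_i-\beta_1)/t$ is \emph{decreasing} on $Y$. Optimality of $e_B$ says: whenever a transfer $u_1\to u_2$ is possible, $a_{u_1}>a_{u_2}$. For $u_1,u_2\in X$ that forces $u_1>u_2$ in the \emph{original} order, i.e.\ it forbids transfers into $u_2\in X$ from a smaller-in-original-order element; this is external activity of each element of $X$ with respect to the original order, not the reversed one, and symmetrically for $Y$. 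A small sanity check: take $U_{1,2}$ with $X=\{1,2\}$, $Y=\{1\}$; the functional picks $B=\{2\}$, and the transfer $2\to1$ is available, so $1\in X$ is externally \emph{inactive} under the reversed order but externally active under the original. This also matches the termination conditions of the paper's Algorithm~\ref{alg:1} (no $b\in B$ can be replaced by a \emph{larger} element of $X$; no $b\in B\cap Y$ by any \emph{smaller} element), which are original-order conditions. So the phrase ``reversed order'' in the lemma as printed appears to be a slip, and your case analysis, carried out carefully, should produce the original-order statement rather than the reversed one.

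Two smaller points worth tightening in a written version: you should verify that the prescribed $a$ really does strictly separate $X$ from $E\setminus X$ on the $\Delta$-summand and $Y$ from $E\setminus Y$ on the $\nabla$-summand for every partition with $X\cap Y=\{1\}$ (it does, since $\alpha_i>\alpha_1$ and $\beta_i>\beta_1$ for $i>1$, but this is where you actually use positivity of these differences); and that the resulting cell $u\Delta_X+\mathbf e_B+t\nabla_Y$ is full-dimensional, which follows from $|X|+|Y|=|E|+1$ together with the fact that the linear spans of $\Delta_X$ and $\nabla_Y$ meet only in the origin when $|X\cap Y|=1$.
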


The basis $B$ can be found using the simplex algorithm for linear programming
on $P(M)$, applied to a linear functional constructed from the $\alpha$ and $\beta$
encoding the activity conditions.
This procedure can be completely combinatorialised, giving a way to start from a randomly chosen initial basis and make a sequence of exchanges which yields a unique output $B$ regardless of the input choice.
The proof is as follows.

\begin{proof}
Choose any basis, $B_0$, and order the elements $b_1,\ldots, b_r$ lexicographically. Perform the following algorithm to find the basis $B$.
The algorithm makes a sequence of replacements of the elements of~$B$, of two kinds, until it is unable to make any more.

%
%
%

\medskip

\begin{algorithm}\label{alg:1}\mbox{}
\begin{enumerate}[(1)]
    \item[\textbf{Input:}] a basis $B_0$ of~$M$.
    \item[\textbf{Output:}] the basis $B$ of~$M$ called for in the lemma.
    \item Let $i=0$.
    \item Attempt to produce new bases $B_1,B_2,\ldots$ as follows. 
    Let the elements of $B_i$ be $b_1,\ldots,b_r$, where $b_1<\cdots<b_r$.  For each $j=1,\ldots,r$:
    \begin{enumerate}[(a)]
        \item If there exists $x\in X$ greater than $b_j$ such that $B_i\setminus\{b_j\}\cup\{x\}$ is a basis of~$M$,
        then choose the maximal such $x$, let $B_{i+1} = B_i\setminus\{b_j\}\cup\{x\}$, increment $i$, and repeat step~(2).
        Call this a check of type (a). 
        \item If not, and $b_j\in Y$, and there exists $z$ less than $b_j$ such that $B_i\setminus\{b_j\}\cup\{z\}$ is a basis of~$M$,
        then choose the minimal such $z$, let $B_{i+1} = B_i\setminus\{b_j\}\cup\{z\}$, increment $i$, and repeat step~(2).
        Call this a check of type (b).
    \end{enumerate}
    \item Terminate and return $B=B_i$.
\end{enumerate}
\end{algorithm}

The remainder of the proof analyses this algorithm.
Let $\gamma_1,\ldots,\gamma_n\in\mathbb{R}$ be such that $0=|\gamma_1|\ll \cdots \ll |\gamma_n|$, and $\gamma_a>0$ if $a\in X$ while $\gamma_a<0$ if $a\in Y$.

\begin{sublemma}
\label{claim3}
Let $B_i$ and $B_{i+1}$ be two bases of $M$ found consecutively by the algorithm. Then $\sum\limits_{a\in B_i}\gamma_a < \sum\limits_{a\in B_{i+1}}\gamma_a $ for all $i$. That is, the sum $\sum\limits_{a\in B}\gamma_a $ is increasing with the algorithm.
\end{sublemma}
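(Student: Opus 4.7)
The plan is to verify the claim by direct case analysis of the two kinds of exchanges that the algorithm performs. In each case the change in $\sum_{a \in B} \gamma_a$ from one step to the next reduces to computing $\gamma_{\text{new}} - \gamma_{\text{old}}$, and the sign of this difference is forced by the sign pattern and magnitude hierarchy built into the definition of the $\gamma_a$.

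First I would recall that $\gamma_a > 0$ for $a \in X$, $\gamma_a < 0$ for $a \in Y$, $\gamma_1 = 0$, and crucially $|\gamma_a| \ll |\gamma_{a'}|$ whenever $a < a'$. Passing from $B_i$ to $B_{i+1}$ replaces a single element $b_j$ by some new element, so I just need to check, in each branch of the algorithm, that the replacement strictly increases $\gamma$.

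For a check of type (a), the element $b_j$ is replaced by some $x \in X$ with $x > b_j$. Then $\gamma_x > 0$, and I split on the three possibilities for $b_j$: if $b_j \in X \setminus \{1\}$ then $0 < \gamma_{b_j} \ll \gamma_x$; if $b_j \in Y$ then $\gamma_{b_j} < 0 < \gamma_x$; if $b_j = 1$ then $\gamma_{b_j} = 0 < \gamma_x$. In every case $\gamma_x > \gamma_{b_j}$, so the sum strictly increases. For a check of type (b), we have $b_j \in Y$ (hence $\gamma_{b_j} < 0$) and it is replaced by some $z < b_j$. Again I split on the three possibilities for $z$: if $z \in X$ then $\gamma_z > 0 > \gamma_{b_j}$; if $z \in Y$ then both $\gamma_z$ and $\gamma_{b_j}$ are negative but $|\gamma_z| \ll |\gamma_{b_j}|$, so $\gamma_z > \gamma_{b_j}$; if $z = 1$ then $\gamma_z = 0 > \gamma_{b_j}$. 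Either way $\gamma_z > \gamma_{b_j}$ and the sum strictly increases.

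I do not foresee any real obstacle here: the claim is essentially a sanity check that the $\gamma_a$ were chosen correctly to encode the algorithm's preference order (prefer $X$-elements over $Y$-elements by sign; among $X$-elements prefer larger indices, and among $Y$-elements prefer smaller indices, by the magnitude hierarchy). The only thing to be mildly careful about is to account for the element $1 \in X \cap Y$, which has $\gamma_1 = 0$ and can appear as either the old or the new element in either type of exchange; as shown above this case is benign.
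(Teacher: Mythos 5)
Your proposal is correct and takes essentially the same approach as the paper: a direct case analysis on the two exchange types, using the sign convention (positive on $X$, negative on $Y$) and the magnitude hierarchy $|\gamma_1| \ll \cdots \ll |\gamma_n|$ to show each replacement increases the $\gamma$-sum. You are somewhat more careful than the paper's terse argument in explicitly separating out the $\gamma_1 = 0$ boundary case, but the underlying reasoning is identical.
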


\begin{subproof}[Proof of Claim~\ref{claim3}]
Moves of type (a) replace an element $b$ of a basis with a larger element $c$ in $X$, so regardless of whether $b$ was in $X$ or $Y$, this must increase the sum as $\gamma_b>0$. Moves of type (b) replace an element $y\in Y$ in the basis with a smaller element $d$. If $d\in Y$, we are replacing $\gamma_y$ with a smaller negative, as $|\gamma_d|\ll|\gamma_y|$. If $d\in X$, we are replacing a negative $\gamma_y$ with a positive $\gamma_d$. So $\sum\limits_{a\in B}\gamma_a$ is increasing in every case.
\end{subproof}

We will write the symmetric difference of two sets $A$ and $B$ as $A\tri  B$. The next result follows as a corollary of the previous claim.

\begin{sublemma}
\label{delta}
$B\tri  Y$ written with the largest elements first is lexicographically increasing with the algorithm.
\end{sublemma}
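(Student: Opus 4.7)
The plan is to derive Claim~\ref{delta} as a corollary of Claim~\ref{claim3} by re-reading the sum $\sum_{a \in B}\gamma_a$ as a weighted enumeration of~$B\tri Y$.

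First I would rewrite the sum. Since $X\cup Y=E$ and $X\cap Y=\{1\}$, every $a\neq 1$ lies in exactly one of $X\setminus Y$ (where $\gamma_a>0$) or $Y\setminus X$ (where $\gamma_a<0$), while $\gamma_1=0$. Splitting $B$ by which of these three sets each element belongs to, and using $B\setminus Y=B\cap(X\setminus Y)$ and $Y\setminus B=((Y\setminus X)\setminus B)\cup(\{1\}\setminus B)$, a short bookkeeping gives
\[\sum_{a\in B}\gamma_a \;=\; \sum_{a\in(B\tri Y)\setminus\{1\}}|\gamma_a| \;-\; \sum_{a\in Y\setminus X}|\gamma_a|,\]
where the second term is independent of~$B$. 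By Claim~\ref{claim3} the left-hand side strictly increases along the algorithm, and hence so does $\sum_{a\in(B_i\tri Y)\setminus\{1\}}|\gamma_a|$.

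Next I would exploit the super-fast growth $0=|\gamma_1|\ll|\gamma_2|\ll\cdots\ll|\gamma_n|$, chosen so that each $|\gamma_k|$ exceeds the sum of all smaller $|\gamma_j|$. Under this hypothesis, the map $S\mapsto\sum_{a\in S}|\gamma_a|$ on subsets $S\subseteq\{2,\ldots,n\}$ is strictly monotone with respect to the lexicographic order on decreasingly-sorted sequences: if $S$ and $S'$ differ, the one whose largest element of difference is larger has the larger sum, by the dominance of the top weight. Applied to $S_i:=(B_i\tri Y)\setminus\{1\}$, this yields that the truncated sequences are strictly lex-increasing.

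The final step is to promote strict lex-increase from $S_i$ to $B_i\tri Y$ itself, accounting for the possibly present element~$1$. Because $1$ is the smallest element of~$E$, it sits in the last position of any decreasingly-sorted sequence; and because $|\gamma_1|=0$, its presence is invisible to Claim~\ref{claim3}. A short case check against the four possible behaviours (whether $1$ enters, leaves, stays in, or stays out of $B\tri Y$ from step~$i$ to step~$i+1$) shows that the strict lex-increase of $S_i$ always transfers to a strict lex-increase of $B_i\tri Y$: in the two cases where $1$ is present in both or in neither, the comparison is identical to that of the~$S_i$; in the case where $1$ enters, the longer sequence is automatically lex-greater by the prefix condition; and in the case where $1$ leaves, strict increase in the sum forces some element strictly greater than~$1$ to enter $B\tri Y$, which changes an earlier position of the sorted sequence in the desired direction. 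I expect this last reconciliation between the weight-blind Claim~\ref{claim3} and a lex order that sees the element~$1$ to be the only mildly delicate point of the argument.
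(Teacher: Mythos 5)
Your proof is correct, and it takes a genuinely different route from the paper. The paper proves Claim~\ref{delta} directly by case analysis on the two kinds of algorithm moves: a type~(a) move inserts a larger element into $B\tri Y$; a type~(b) move adds the larger removed element to $B\tri Y$ and, depending on whether the smaller incoming element $z$ lies in $Y$ or $X$, either removes $z$ from or adds $z$ to $B\tri Y$ --- each case plainly increases the decreasing-lex order. You instead derive the claim as a corollary of Claim~\ref{claim3} by observing the identity
\[
\sum_{a\in B}\gamma_a \;=\; \sum_{a\in(B\tri Y)\setminus\{1\}}|\gamma_a|\;-\;\sum_{a\in Y\setminus X}|\gamma_a|,
\]
whose second term is constant in $B$, and then using the super-increasing growth $|\gamma_2|\ll\cdots\ll|\gamma_n|$ to convert monotonicity of the weighted sum into decreasing-lex monotonicity of $(B\tri Y)\setminus\{1\}$; your handling of the element $1$ (invisible because $\gamma_1=0$, always last in the sorted sequence) is sound, though it can be shortened: if $1$ were the largest element of $(B_i\tri Y)\tri(B_{i+1}\tri Y)$ the two weighted sums would coincide, contradicting Claim~\ref{claim3}. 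What your approach buys is economy: it exposes Claims~\ref{claim3} and~\ref{delta} as two faces of the same fact, and in effect makes explicit the connection that the paper only invokes later, at the opening of the proof of Claim~\ref{claim4}, where it notes that with these weights ``only the largest element of $B\tri Y$ determines the total ordering.'' What the paper's direct argument buys is self-containment: it reads off the lex increase move by move without needing the reader to carry the precise quantitative form of the super-increasing hypothesis through an intermediate identity.
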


\begin{subproof}[Proof of Claim~\ref{delta}]
A move of type (a) replaces an element of $B$ with a larger element in $X$. This puts a larger element into $B\tri Y$ than was in $B$ originally, and so must cause a lexicographic increase. A move of type (b) removes a $Y$ element in $B$ (and so, an element not in $B\tri  Y$), and puts  a smaller element $z$ into $B$. Removing the $Y$ element from $B$ adds it to $B\tri  Y$, and adding an element to a set cannot decrease the lexicographic order. If the smaller element $z$ is in $Y$, then this move removes $z$ from $B\tri  Y$. As we have replaced it with a larger element, the lexicographic order of $B\tri  Y$ is increased. If $z$ is in $X$, the move adds $z$ to $B\tri  Y$, increasing the lexicographic order of $B\tri  Y$.
\end{subproof}

\begin{sublemma}
\label{claim4}
The algorithm described above terminates and gives an output independent of $B_0$.
\end{sublemma}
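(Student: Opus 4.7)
The claim has two parts: termination of Algorithm~\ref{alg:1} and independence of its output from~$B_0$. Termination is immediate from Claim~\ref{claim3}, which shows that the scalar $f(B_i) := \sum_{a \in B_i} \gamma_a$ strictly increases at each iteration; since $B_i$ ranges over the finite set $\mathcal B_M$, the quantity $f(B_i)$ takes only finitely many values, so the process must halt.

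For independence, the plan is to identify the terminal basis $B$ as the unique maximizer of the linear functional $f$ over $P(M)$; since $f$ depends only on $M$ and the $\gamma_a$, which are fixed before the algorithm starts, its maximizer cannot depend on~$B_0$. The first step is to classify the single-element basis exchanges $B \mapsto B - b + c$ that strictly increase $f$. Under the sign convention (positive on $X$, negative on $Y$) and the rapid growth of the $|\gamma_a|$, these are exactly the exchanges falling into one of three families: (i) $b, c \in X$ with $c > b$; (ii) $b, c \in Y$ with $c < b$; and (iii) $b \in Y$, $c \in X$. A case-by-case inspection then shows that every such exchange is either a type~(a) or a type~(b) move in the algorithm; hence at termination no sum-increasing exchange remains available, and $B$ is a local maximum of $f$ under single-element basis exchange.

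To promote the local maximum to a unique global maximum, I would invoke two standard facts. First, on a matroid polytope every linear functional attains its global maximum at any local maximum under single-element basis exchange, because the edges of $P(M)$ are precisely given by such exchanges. Second, the growth condition $0 = |\gamma_1| \ll \cdots \ll |\gamma_n|$ forces $f$ to separate distinct bases: for $B \neq B'$, the largest index $m$ in $B \triangle B'$ satisfies $|\gamma_m|$ exceeding the total of $|\gamma_a|$ over all $a < m$, so the sign of the contribution $\pm\gamma_m$ determines that of $f(B) - f(B')$. Together these yield that the terminal $B$ equals the unique maximizer, independent of $B_0$. The main obstacle is the case check of the previous paragraph: one must verify that the subordination of type~(b) to the failure of type~(a) does not conceal a sum-increasing exchange, which amounts to observing that Cases~(ii) and~(iii) with $c < b$ all have $b \in Y$ and so are indeed eligible for type~(b) at termination.
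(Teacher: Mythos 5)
Your proof is correct, and it takes a genuinely different route from the paper's. You treat the algorithm as local search for the generic linear functional $f(B)=\sum_{a\in B}\gamma_a$ on the vertices of $P(M)$: you classify the single-element exchanges that increase $f$, check that all of them are covered by type~(a) or type~(b) moves (and that the subordination of (b) to (a) is harmless at termination, since there both conditions fail for every $b_j$), conclude that the terminal basis is a local maximizer of $f$ along edges of $P(M)$, and then invoke convexity plus the well-known fact that edges of a matroid base polytope are exactly single-element exchanges to get a global maximum, which is unique by the separation property the rapid growth of the $|\gamma_a|$ forces (any two bases differ in their largest symmetric-difference element $m>1$, and $\pm\gamma_m$ dominates). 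The paper in fact gestures at this LP viewpoint (``The basis $B$ can be found using the simplex algorithm for linear programming on $P(M)$\dots''), but then deliberately ``combinatorialises'' it: for termination it uses the lexicographic monotonicity of $B\triangle Y$ (Claim~\ref{delta}, essentially a combinatorial rephrasing of your Claim~\ref{claim3}), and for uniqueness it passes to the delta-matroid $\{B\triangle Y\}$ (the twist of $M$ by $Y$), shows that terminal bases maximize $|B\triangle Y|$, and applies the delta-matroid symmetric exchange axiom to derive a contradiction from two distinct terminal bases. Your approach is shorter and leans on the polyhedral characterization of matroid polytope edges; the paper's is longer but self-contained on the combinatorial side and highlights the delta-matroid structure that reappears implicitly elsewhere (e.g.\ in Lemma~\ref{lattice}). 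Both arguments are sound.
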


\begin{subproof}[Proof of Claim~\ref{claim4}]
Order all bases of the matroid based on the increasing lexicographic order of $B\tri Y$. As we chose the elements $\delta_a$ to be much greater than the previous element, only the largest element of $B\tri  Y$ determines the total ordering. We have shown in the previous corollary that this sequence is increasing with the algorithm. As there is a finite number of bases, there must be a greatest element, and thus the algorithm terminates.

We now need to show that there is a unique basis for which the algorithm can terminate.

The structure $(E,\{B_i\tri  Y \ | \ B_i\in\mathcal{B}_M\})$ is what is known as a \emph{delta-matroid}, a generalisation of a matroid allowing bases to have different sizes. This delta-matroid is a \emph{twist} of $M$ by the set $Y$ \cite{bouchet}. The subsets $B_i\tri  Y$ are called the feasible sets. It is a result of Bouchet (\cite{bouchet}) that feasible sets of largest size form the bases of a matroid. 


In order to show uniqueness of termination bases, we will first show that if $B$ is a termination basis and $B\tri  Y$ is not of largest size, then any basis $B'$ with $|B'\tri  Y|>|B\tri  Y|$ is terminal. Suppose this is not the case. As $|B\tri  Y|=|B|+|Y|-2|B\cap Y|$, this requires that $|B'\cap Y|<|B\cap Y|$.

As $B'$ is not a termination basis, there is either an element $b\in B'$ such that $(B'-b)\cup c\in\mathcal{B}$ for some element $c\in X$, where $c>b$, or there is an element $b\in Y\cap B$  such that $(B'-b)\cup a\in\mathcal{B}$ for some $a<b$. If we have $b,c\in X$, $|((B'-b)\cup c)\cap Y|=|B'\cap Y|$. If $b\in Y$, $|((B'-b)\cup c)\cap Y|<|B'\cap Y|$. If $a,b\in Y$, then $|((B'-b)\cup a)\cap Y|=|B'\cap Y|$. Finally, if $b\in Y$ and $a\in X$, then $|((B'-b)\cup a)\cap Y|<|B'\cap Y|$. In every case we have a contradiction.

As the algorithm terminates, we know that after a finite number of such exchanges, we produce $B$ from $B'$. Let the bases constructed in each step form a chain $$B',B_1,B_2,\ldots,B_n, B.$$ From above, we have that $|B'\cap Y|\geq |B_1\cap Y|\geq\cdots\geq |B_n\cap Y|\geq |B\cap Y|$. This contradicts the initial assumption that $|B'\cap Y|<|B\cap Y|$. 
%

Now assume the algorithm can terminate with two bases $B_1,B_2$. Take $B_1\tri  Y$ and $B_2\Delta Y$, and choose the earliest element $b\in B_1\tri  Y-B_2\tri Y$ (assuming this comes lexicographically first in $B_1\tri  Y$). If $b\in X$, then $b\in B_1-B_2$. If $b\in Y$, then $b\in B_2-B_1$. Similarly, if $c\in B_2\tri  Y-B_1\tri  Y$, if $c\in X$ then $c\in B_2-B_1$, or if $c\in Y$ then $c\in B_1-B_2$.

Apply the delta-matroid exchange algorithm to $B_1\tri  Y$ and $B_2\tri  Y$ to get that $(B_1\tri  Y)\tri \{b,c\}$ is a feasible set, for some element $c\in (B_1\tri  Y)\tri (B_2\tri  Y)$. Given we have a twist of a matroid, we must have that $(B_1\tri  Y)\tri \{b,c\}=B_3\tri  Y$ for some basis $B_3$, and so $|(B_1\tri \{b,c\}|=|B_3|=|B_1|$ as $\tri $ is associative. This means we must have that exactly one of $\{b,c\}$ is in $B_1$. If $b\in X$, $(B_1\tri  Y-b)\cup c=((B_1-b)\cup c)\tri  Y$, so $(B_1-b)\cup c\in\mathcal{B}$ and we must have $c\in X$ by the above paragraph. As $b$ was the earliest element different in either basis, we must have $c>b$, and so $B_1$ was not a termination basis of the original algorithm.  If $b\in Y$, $(B_1\tri  Y-b)\cup c=B_1\tri  ((Y-b)\cup c)$. But we cannot change $Y$, so must have $[(B_1-c)\cup b]\tri Y$ and $c\in Y$. This means that again $B_1$ was not a termination basis, as we are replacing an element of $Y$ with a smaller one. This completes the proof of Claim~\ref{claim4}.
\end{subproof}
This, in turn, completes the proof of Lemma \ref{lem:B}.
\end{proof}

Before we can get to the proof of Theorem \ref{coeffs}, we first need two results on how these top degree cells interact.
Note that in the service of readability we write $1$ instead of $\{1\}$ in subscripts. 
When we say that a polytope contains a basis $B$, we mean that it contains the indicator vector ${\bf e}_B$. 

\begin{lemma}
\label{meet}
Take two distinct partitions $(X_1,Y_1)$, $(X_2,Y_2)$ of $[n]\setminus\{1\}$. 
Let $B_1$, $B_2$ be the bases found by Algorithm~\ref{alg:1} such that we have top degree cells $T_i={\bf e}_{B_i}+\Delta_{1\cup X_i}+\nabla_{1\cup Y_i}$, $i\in\{1,2\}$. Suppose that $T_1\cap T_2\neq\emptyset$.  Then $B_1=B_2$.
\end{lemma}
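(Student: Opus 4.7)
My plan is to exploit the polyhedral structure of $\mathcal F$ together with the compatibility of its canonical Minkowski decomposition with face selection, recorded just after the construction of $\mathcal F$. Since $T_1$ and $T_2$ are both maximal cells of the polyhedral subdivision $\mathcal F$, their nonempty intersection $\sigma := T_1\cap T_2$ is necessarily a common face of each, so there exists a linear functional $a$ on $\mathbb R^E$ with $\sigma = m_a(T_1) = m_a(T_2)$.

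Next I would apply the compatibility rule $m_a(F+G+H) = m_a(F)+m_a(G)+m_a(H)$ to each of the top-degree expressions. Since $m_a$ of a single point is the point itself, this produces two canonical Minkowski decompositions
\[
\sigma \;=\; m_a(\Delta_{1\cup X_1}) + \mathbf e_{B_1} + m_a(\nabla_{1\cup Y_1}) \;=\; m_a(\Delta_{1\cup X_2}) + \mathbf e_{B_2} + m_a(\nabla_{1\cup Y_2}).
\]
Uniqueness of the canonical Minkowski decomposition of a cell of the mixed subdivision then forces the three summands to agree pair-by-pair; in particular the $P(M)$-summands match, giving $\mathbf e_{B_1}=\mathbf e_{B_2}$ and hence $B_1=B_2$.

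The main obstacle I anticipate is justifying the uniqueness step above. The clean route is to lift back to $\mathit{Lift}$, whose Minkowski presentation into three summands is fixed: each lower face of $\mathit{Lift}$ carries a decomposition into faces of the three summand polytopes, and this descends unambiguously to a canonical decomposition of its projection in $\mathcal F$. A more elementary fallback, should that be desired, is to pick a vertex $v$ of $\sigma$ and equate its two expressions as sums of vertices of the summand polytopes; this yields $\mathbf e_{B_1}-\mathbf e_{B_2} = u(\mathbf e_{i'}-\mathbf e_i) + t(\mathbf e_{j'}-\mathbf e_j)$ for some $i,i'\in 1\cup X_k$ and $j,j'\in 1\cup Y_k$. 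Since the face poset of $\mathcal F$ is independent of the choice of positive $t,u$, we may assume $t,u$ are large enough that the right-hand side can only match the $\{-1,0,1\}$-valued left-hand side by vanishing term-by-term, which again forces $i=i'$, $j=j'$, and $B_1=B_2$.
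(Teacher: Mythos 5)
Your proposal is essentially correct, and your ``elementary fallback'' is in fact precisely the argument the paper gives: choose a vertex $p$ of $\sigma=T_1\cap T_2$, write it two ways as $\mathbf e_{B_i}+(\text{vertex of }u\Delta)+(\text{vertex of }t\nabla)$, and invoke the observation that the combinatorial type of $\mathcal F(t,u)$ is independent of positive $t,u$ (together with the fact that increasing $t,u$ only enlarges cells, up to a translation by a multiple of $\mathbf e_1$) to assume $t,u$ generic enough that this decomposition is unique. The paper's genericity condition is that $t,u,t-u\notin\{-1,0,1\}$, i.e.\ all sums of subsets of $\{1,u,-t\}$ are distinct. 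Your first, higher-level route via uniqueness of the canonical Minkowski decomposition of a cell of the mixed subdivision is also valid, though you should beware one inaccuracy: it is not generally true that a \emph{single} functional $a$ on $\mathbb R^E$ satisfies $\sigma=m_a(T_1)=m_a(T_2)$. The functional picking out $\sigma$ from $T_i$ is obtained from the lifting functional after subtracting the gradient of the affine piece of the lift over $T_i$, and these gradients differ for adjacent maximal cells. The fix is exactly what you note: pass to $\mathit{Lift}$, where a single lifting functional $(a',-1)$ selects $\tilde\sigma$ and shows the canonical decomposition of $\sigma$ is well-defined, or simply use two separate functionals $a_1,a_2$ and appeal to uniqueness. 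One further small imprecision: in the fallback you conclude the right-hand side ``vanishes term-by-term.'' That is slightly stronger than what is forced when $t=u$ (you could have $i'=j'$, $i=j=1$), but the desired conclusion $B_1=B_2$ is unaffected, and it becomes literally true under the paper's genericity condition $|t-u|\geq 2$.
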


\begin{proof}
As we have noted, the combinatorial type of the subdivision $\mathcal F(t,u)$ is independent of the values of $t$ and~$u$, as long as these are positive.
Also, if $t\leq t'$ and $u\leq u'$,
then each cell of $\mathcal F(t,u)$ is a subset of the corresponding cell of $\mathcal F(t',u')$,
up to translation of the latter by $(t'-t-u'+u){\bf e}_1$.
Thus if the top-degree cells indexed by $(X_1,Y_1)$ and $(X_2,Y_2)$ intersect, they will continue to intersect if $t$ or $u$ are increased.
So we may assume that none of $t$, $u$, $t-u$ lie in $\{-1,0,1\}$, by increasing $t$ and $u$ as necessary.

Because $\mathcal F$ is a cell complex, $T_1\cap T_2$ is a face of~$\mathcal F$, and it therefore contains a vertex $p$ of~$\mathcal F$.
For each $i=1,2$, the point $p$ is the sum of ${\bf e}_{B_i}$, a vertex of $u\Delta$, and a vertex of $t\nabla$.
Because every subset of the list $1,u,-t$ has a different sum,
$p$ can be decomposed as a zero-one vector plus a vertex of $u\Delta$ plus a vertex of $t\nabla$ in only one way,
and it follows that ${\bf e}_{B_1} = {\bf e}_{B_2}$.
\end{proof}

%

\begin{lemma}
\label{lattice}
Take two distinct partitions $P_1=(X_1,Y_1)$, $P_2=(X_2,Y_2)$ of $[n]\setminus\{1\}$ such that their corresponding top degree cells $T_1$ and $T_2$ contain a common point $p$. Now let $P_3=(X_3,Y_3)$ be a partition of $[n]\setminus\{1\}$ such that 
$X_1\cap X_2\subseteq X_3$ and $Y_1\cap Y_2\subseteq Y_3$. Then $p$ is in the top degree cell $T_3$ indexed by~$P_3$, 
and Algorithm~\ref{alg:1} finds the same basis $B^*$ for each of $P_1$, $P_2$ and $P_3$.
\end{lemma}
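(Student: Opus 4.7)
The plan is to first apply Lemma~\ref{meet} to the hypothesis $T_1\cap T_2\ni p$, which immediately gives $B_1=B_2=:B^*$. The remaining work splits into (i) showing $p\in T_3$, and (ii) concluding that Algorithm~\ref{alg:1} on input $P_3$ returns~$B^*$. Part (ii) follows quickly from (i): when $P_3\neq P_1$, $T_1$ and $T_3$ become two distinct top-degree cells sharing~$p$, so Lemma~\ref{meet} applied again forces their basis translates to agree; and when $P_3=P_1$ there is nothing to show.

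For (i), the strategy is a coordinatewise sign analysis of $v:=p-{\bf e}_{B^*}$. By Lemma~\ref{712}, the containment $p\in T_1$ yields a decomposition $v=q_1+r_1$ with $q_1\in u\Delta_{1\cup X_1}$ and $r_1\in t\nabla_{1\cup Y_1}$; since the only coordinate shared by the supports of these two summands is~$1$, this pins down $v_a\geq 0$ for $a\in X_1\setminus\{1\}$ and $v_a\leq 0$ for $a\in Y_1\setminus\{1\}$. Combining with the analogous constraints coming from $T_2$, I obtain $v_a\geq 0$ on $X_1\cap X_2$, $v_a\leq 0$ on $Y_1\cap Y_2$, and crucially $v_a=0$ on the ``crossed'' set $C:=(X_1\cap Y_2)\cup(Y_1\cap X_2)$ where the two constraints conflict.

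It remains to assemble a decomposition $v=q_3+r_3$ certifying $p\in T_3$. For $a\in X_3\setminus\{1\}$ put the value $v_a$ into the corresponding coordinate of $q_3$, and for $a\in Y_3\setminus\{1\}$ put it into $r_3$; these assignments have the correct signs by the previous paragraph, because any such $a$ either lies in $X_1\cap X_2$ (respectively $Y_1\cap Y_2$), where the sign is forced correctly, or in $C$, where $v_a=0$. The remaining mass at coordinate~$1$ is then allocated to $(q_3)_1$ and $(r_3)_1$ so as to match $v_1$ and reach the prescribed totals $u$ and $t$. The main obstacle is checking that the resulting coefficients at coordinate~$1$ remain nonnegative; this reduces to the equalities $\sum_{a\in X_3\setminus\{1\}}v_a=\sum_{a\in X_1\setminus\{1\}}v_a$ and the corresponding identity for $Y$, both of which hold because the symmetric-difference contributions on either side lie in $C$ and vanish. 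Thus the coefficients at coordinate~$1$ for the $T_3$-decomposition coincide with those already known to be valid for the $T_1$-decomposition, establishing (i) and hence the lemma.
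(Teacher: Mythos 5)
Your overall structure mirrors the paper's: reduce to a common basis $B^*$ via Lemma~\ref{meet}, then do a coordinatewise sign analysis of $v=p-{\bf e}_{B^*}$ to build a decomposition witnessing membership in the third cell. The sign analysis itself is essentially identical to the paper's (which phrases it as coefficients in the basis $\{{\bf e}_i-{\bf e}_1\}$), and your bookkeeping at coordinate~$1$ (via $X_3\triangle X_1\subseteq C$) is correct.

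However there is a genuine circularity between your parts (i) and (ii). Your decomposition $v=q_3+r_3$ shows that $p$ lies in the polytope $\Pi:={\bf e}_{B^*}+u\Delta_{1\cup X_3}+t\nabla_{1\cup Y_3}$. But $T_3$, by Lemma~\ref{lem:B}, is ${\bf e}_{B_3}+u\Delta_{1\cup X_3}+t\nabla_{1\cup Y_3}$ for whatever basis $B_3$ Algorithm~\ref{alg:1} actually returns on input $P_3$; a priori $\Pi$ need not even be a cell of $\mathcal F$, let alone equal $T_3$. You can only identify $\Pi$ with $T_3$ once you know $B_3=B^*$ — but that is exactly your part (ii), which you derive from (i). So nothing in your argument actually pins down the basis attached to the $P_3$-cell. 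The paper avoids this by first proving $B_3=B^*$ on its own (via the characterisation of the algorithm's output as the lexicographic maximiser of $B\triangle Y$, together with $Y_1\cap Y_2\subseteq Y_3\subseteq Y_1\cup Y_2$), and only then concluding $p\in T_3$ from the sign analysis. An alternative repair that fits your approach: $B^*$ satisfies the termination postconditions of Algorithm~\ref{alg:1} for both $(X_1,Y_1)$ and $(X_2,Y_2)$; since any element forced into $X$ (resp.~$Y$) by a postcondition lies in $X_1\cap X_2\subseteq X_3$ (resp.~$Y_1\cap Y_2\subseteq Y_3$), $B^*$ also satisfies them for $(X_3,Y_3)$, so by the uniqueness in Claim~\ref{claim4} it equals $B_3$. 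With that inserted before your sign analysis, the rest of your proof goes through.
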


\begin{proof}
By Lemma \ref{meet}, we have $T_1={\bf e}_{B^*}+\Delta_{1\cup X_1}+\nabla_{1\cup Y_1}$ and $T_2={\bf e}_{B^*}+\Delta_{1\cup X_2}+\nabla_{1\cup Y_2}$ where the basis $B^*$ found by Algorithm~\ref{alg:1} is common to both expressions. 
The lexicographically greatest set of form $B\tri Y_1$ is that with $B=B^*$, likewise for $B\tri  Y_2$. 

Let $B'\tri Y_3$ be the lexicographically greatest set of form $B\tri Y_3$.  Our objective is to show that $B' = B^*$.
Assume otherwise for a contradiction, and let $e$ be the largest element in $B'\tri B^* = (B'\tri Y_3)\tri(B^*\tri Y_3)$.
By choice of~$B'$, we have $e\in B'\tri Y_3$ and $e\not\in B^*\tri Y_3$.
The latter implies that $e\not\in B^*\tri Y_i$ for at least one of $i=1,2$; without loss of generality, say $e\not\in B^*\tri Y_1$.
Then $B'\tri Y_1$ is lexicographically earlier than $B^*\tri Y_1$, because the former but not the latter contains $e$ and they agree in which elements greater than $e$ they contain.
This is the desired contradiction.

We conclude that $T_k = {\bf e}_{B^*}+\del_{1\cup X_k}+\nab_{1\cup Y_k}$ for each $k=1,2,3$.
Expanding $p-{\bf e}_{B^*}$ in the basis ${\bf e}_2-{\bf e}_1, \ldots, {\bf e}_n-{\bf e}_1$ of the affine span of the $T_k$,
we see that the coefficient of ${\bf e}_i-{\bf e}_1$ is nonnegative if $i\in 1\cup X_k$ and nonpositive if $i\in 1\cup Y_k$, for $k=1,2$.
Therefore this coefficient is zero unless $i\in X_1\cap X_2$ or $i\in Y_1\cap Y_2$, and this implies $p\in T_3$.
\end{proof}

The following result is an immediate corollary of Lemma \ref{lattice}:
\begin{cor}
Define $T_Y=u\del_X+{\bf e}_B+t\nab_Y$. For every face $F$ of the mixed subdivision, if $F$ is contained in any top degree face, then the set of $Y$ such that $F$ is contained in $T_Y$ is an interval in the boolean lattice.
\end{cor}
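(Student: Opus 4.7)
The plan is to let $S=\{Y : F\subseteq T_Y\}$, which is nonempty by hypothesis, and to show $S$ is an interval in the Boolean lattice of subsets of $[n]\setminus\{1\}$ by identifying its minimum and maximum and then invoking Lemma~\ref{lattice} to fill in everything between. I read Lemma~\ref{lattice}, after rewriting its condition ``$X_1\cap X_2\subseteq X_3$ and $Y_1\cap Y_2\subseteq Y_3$'' via complementation in $[n]\setminus\{1\}$ as ``$Y_1\cap Y_2\subseteq Y_3\subseteq Y_1\cup Y_2$'', as the assertion that if a single point $p$ lies in $T_{Y_1}\cap T_{Y_2}$ then $p\in T_{Y_3}$ for every $Y_3$ in this interval.

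First I would promote this pointwise statement to a statement about the face~$F$. If $Y_1,Y_2\in S$, applying Lemma~\ref{lattice} to each point of~$F$ yields $F\subseteq T_{Y_3}$ for every $Y_3$ with $Y_1\cap Y_2\subseteq Y_3\subseteq Y_1\cup Y_2$, so
\[
Y_1,Y_2\in S\;\Longrightarrow\;[Y_1\cap Y_2,\,Y_1\cup Y_2]\subseteq S.
\]
In particular $S$ is closed under pairwise meet and pairwise join.

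The second step is the purely order-theoretic observation that a nonempty finite subset $S$ of a Boolean lattice enjoying this ``interval-between-meet-and-join'' closure is itself an interval. Iterating closure under pairwise meet across the finitely many elements of~$S$ gives $A:=\bigcap_{Y\in S}Y\in S$, and analogously $B:=\bigcup_{Y\in S}Y\in S$. Applying the interval-closure property one last time to the pair $A,B$ yields $[A,B]=[A\cap B,\,A\cup B]\subseteq S$, and the reverse inclusion $S\subseteq[A,B]$ holds by the definitions of $A$ and $B$. Hence $S=[A,B]$ is an interval, as required.

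I do not foresee a genuine obstacle: all combinatorial content sits inside Lemma~\ref{lattice}, and what remains is the two routine bookkeeping moves of passing from single points to all of $F$, and from closure under pairs to the full interval structure. The mildest care is only in verifying that the symmetric hypothesis on $X_3$ and $Y_3$ does indeed translate to sandwiching $Y_3$ between $Y_1\cap Y_2$ and $Y_1\cup Y_2$, which is immediate from complementation.
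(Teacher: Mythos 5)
Your argument is correct: the complementation identity $X_1\cap X_2\subseteq X_3 \iff Y_3\subseteq Y_1\cup Y_2$ is right, applying Lemma~\ref{lattice} pointwise to $F$ gives the interval-closure property for $S=\{Y : F\subseteq T_Y\}$, and the final order-theoretic step (a nonempty finite subset of a Boolean lattice closed under $[Y_1\cap Y_2,\,Y_1\cup Y_2]$ is itself an interval, via the global meet and join) is sound. The paper gives no proof, simply declaring the corollary ``immediate'' from Lemma~\ref{lattice}, and your two-step unpacking is exactly the intended argument.
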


We now have all the ingredients we need to prove the main result of this section, restated here:
\begin{thmcoeffs}
Take the regular mixed subdivision $\mathcal F$ of $u\Delta+P(M)+t\nabla$ as described above. 
The unsigned coefficient $|[x^iy^j]Q'_M|$ counts the cells $F+G+H$ of~$\mathcal F$ where $i=dim(F)$, $j=dim(H)$,
and $G$ is a vertex of $P(M)$ and there exists no cell $F+G'+H$ where $G'\supsetneq G$. 
\end{thmcoeffs}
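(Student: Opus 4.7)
The plan is to evaluate $Q_M(t,u)=\#(u\Delta+P(M)+t\nabla)\cap\mathbb{Z}^E$ cell by cell using the mixed subdivision~$\mathcal F$, and then to read off the coefficients $c_{ij}$ of $Q'_M$ after the change of variables. By Proposition~\ref{prop:top degree} every lattice point lies in some top-degree face, and by the corollary to Lemma~\ref{lattice}, for each pair $(F,H)$ of faces of $u\Delta$ and $t\nabla$ the cells $F+G''+H$ of~$\mathcal F$ (as $G''$ varies over faces of $P(M)$) are indexed by an interval in the face poset of $P(M)$. This interval structure is what will ultimately isolate the maximality condition in the theorem.

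The execution has three steps. First, I would partition the lattice points of $u\Delta+P(M)+t\nabla$ by the cell of $\mathcal F$ containing each point in its relative interior, obtaining $Q_M=\sum_{C\in\mathcal F}\#(\mathrm{relint}(C)\cap\mathbb{Z}^E)$, and then expand each interior count via M\"obius inversion on the face poset of $\mathcal F$ to get a signed sum $Q_M=\sum_{C\in\mathcal F}(\pm)\,\#(C\cap\mathbb{Z}^E)$. Next, for a cell $C=F+G+H$ with canonical decomposition, Lemma~\ref{712} reduces its lattice points to sums of lattice points of $F$, $G$, and~$H$, and the resulting count is a polynomial in $(t,u)$ that, by the identity mentioned immediately after the definition of $Q'_M$, transforms under the change of variables to $x^{\dim F}y^{\dim H}$ times an explicit factor recording the contribution of~$G$.

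Finally, I would collect the transformed contributions by pairs $(F,H)$. Within each such pair the interval structure from the corollary to Lemma~\ref{lattice} means that the signed sum over the cells $F+G''+H$ collapses via M\"obius inversion on an interval of the face lattice of~$P(M)$, surviving only at the maximal element. If this maximum $G''$ is a vertex $G$ of~$P(M)$, the surviving contribution is $\pm x^{\dim F}y^{\dim H}$, adding $\pm1$ to $c_{\dim F,\dim H}$; if the maximum is a higher-dimensional face of $P(M)$, its contribution vanishes. Taking absolute values then recovers the count in the theorem. The hard part will be executing this M\"obius cancellation precisely and tracking the signs correctly, in particular matching them with the Dawson-partition interpretation of the alternating-sign pattern alluded to in the abstract, and verifying that the bookkeeping of lattice-point contributions is consistent across cells shared by several top-degree faces (as controlled by Lemma~\ref{meet}).
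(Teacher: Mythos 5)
Your plan diverges from the paper's argument in a way that leaves several real gaps. The paper does \emph{not} run a valuation over all cells of~$\mathcal F$. Its crucial first step (Proposition~\ref{prop:top degree}, proved as Claim~\ref{claim5}) is that the top-degree faces --- the maximal cells of the form $u\Delta_X+{\bf e}_B+t\nabla_Y$, all of which already have the middle summand a vertex --- by themselves cover every lattice point. Inclusion-exclusion then needs to range only over the poset $P$ generated by these top-degree faces and their pairwise intersections. By Lemma~\ref{meet}, every element of $P$ still has the form $u\Delta_{X'}+{\bf e}_B+t\nabla_{Y'}$ with the middle summand a vertex, and $P$ is a disjoint union of face posets of cubes (Proposition~\ref{prop:Dawson}), so the M\"obius function is $(-1)^{\text{corank}}$ and the lattice-point count on each element factors cleanly into $\binom{t+i}{i}\binom{u+j}{j}$. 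Cells of $\mathcal F$ with a higher-dimensional middle summand never enter the computation at all.

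Your version instead writes $Q_M$ as a signed sum over every cell of $\mathcal F$, and then hopes that the contributions with non-vertex $G$ cancel. This is where things go wrong. First, you misread the corollary to Lemma~\ref{lattice}: it says that the set of top-degree faces $T_Y$ \emph{containing a fixed face} of $\mathcal F$ is an interval in the Boolean lattice on $E\setminus\{1\}$ indexed by the partitions $(X,Y)$ --- it is not a statement that, for fixed $(F,H)$, the faces $G''$ with $F+G''+H\in\mathcal F$ form an interval in the face lattice of $P(M)$. No such interval structure is established anywhere in the paper, and I don't see why it should hold. Second, even granting an interval, you haven't explained why the alternating sum over that interval should survive exactly when the top $G''$ is a vertex admitting no $G'\supsetneq G$ with $F+G'+H\in\mathcal F$; an alternating sum over a nontrivial interval of a face lattice gives $\pm1$ (the M\"obius value), not zero. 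Third, your signs ``$(\pm)$'' in $Q_M=\sum_C(\pm)\#(C\cap\mathbb Z^E)$ are not innocuous: after M\"obius-inverting the relative-interior decomposition over the face poset of a ball, the coefficient of a cell $D$ is an Euler-characteristic-type quantity that depends on whether $D$ lies on the boundary, and the bookkeeping is nontrivial. Finally, the asserted ``explicit factor recording the contribution of $G$'' presupposes that the lattice-point count of a mixed cell $F+G+H$ factors through its summands; that is not automatic for Minkowski sums, and the paper only needs (and only proves, via Lemma~\ref{712} and the constraint $|X\cap Y|\leq1$) such a factorisation in the special case where $G$ is a single lattice point. The cleaner route is really the paper's: show the top-degree faces cover all lattice points, and then the whole computation lives inside a union of cubical posets where none of these complications arise.
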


\begin{proof}

First, we must show that all the lattice points of $P(M)+u\Delta+t\nabla$ lie in a top degree face.
Recall that $\pi$ is the projection map from $\mathbb{R}^{n+1}\rightarrow\mathbb{R}^n$.

\begin{sublemma}
\label{claim5}
 Any $\operatorname{\pi}(x)\in (t\nabla+P(M)+u\Delta)\cap\mathbb{Z}^n$ on $\mathfrak{F}$ is of the form $(-{\bf e}_{i_1},\beta_1)+\cdots +(-{\bf e}_{i_t},\beta_t)+({\bf e}_B,0)+({\bf e}_{j_1},\alpha_1)+\cdots +({\bf e}_{j_u},\alpha_u)$.
\end{sublemma}

\begin{subproof}[Proof of Claim~\ref{claim5}]
Lemma~\ref{712} provides an expression for $x$ of the form
\[x=-{\bf e}_{i_1}-\cdots -{\bf e}_{i_t}+{\bf e}_B+{\bf e}_{j_1}+\cdots +{\bf e}_{j_u}.\]
Taking arbitrary preimages under $\pi$ gives the requisite expression except that the sum may be incorrect in the last coordinate. 
To obtain an expression where the last coordinate is correct, we will now rewrite this to show that $x$ in fact lies on $\mathfrak {F}$: this is equivalent to showing that there exists a partition $X\sqcup Y=[n]\setminus 1$ such that every $i$ is in $1\cup Y$, every $j$ is in $1\cup X$, and the algorithm of Theorem \ref{lem:B}, given $X$ and $Y$, yields $B$. This is because, as we know the height function used to lift the top-degree faces, finding this $X$ and $Y$ will give a top-degree face containing $\operatorname{\pi}(x)$, and we would then have the correct last coordinate.

The postconditions of Algorithm~\ref{alg:1} require that 
\begin{enumerate}
\item if there exists an element $d\notin B$ such that $d<e\in B$ and $(B-e)\cup d\in\mathcal{B}$ then $e\in X$, and
\item if there exists an element $e\in B$ such that $e<f\notin B$ and $(B-e)\cup f\in\mathcal{B}$ then $f\in Y$.
\end{enumerate}
Choose any $X$ and any $Y$. Given these, we will construct $X'$ and $Y'$ such that $X'\sqcup Y'=[n]\setminus 1$.
\begin{itemize}
\item Suppose $e\in B$ and $e=i_k$, and there exists $d<e$ with $d\in B$ such that $(B-e)\cup d\in\mathcal{B}$. In the expression for $x$, replace $-{\bf e}_e+{\bf e}_B$ with $-{\bf e}_d+{\bf e}_{(B-e)\cup d}$. Add $d$ to $Y'$.
\item Suppose $f\in B$ and $f=j_l$, and there exists $e<f$ with $e\in B$ such that $(B-e)\cup f\in\mathcal{B}$. In the expression for $x$, replace ${\bf e}_B+{\bf e}_f$ with ${\bf e}_{(B-e)\cup f}+{\bf e}_e$. Add $e$ to $X'$.
\item If we have an element $i\in X\cap Y$, in the expression for $x$ replace $-{\bf e}_i+{\bf e}_i$ with $-{\bf e}_1+{\bf e}_1$. Remove $i$ from both $X'$ and $Y'$.
\end{itemize}
The above three operations always replace a term $\pm {\bf e}_a$ with a smaller term. As we have a finite ground set, there is a finite amount of such operations, and so this construction must terminate with a $X',Y'$ which fits the criteria. At this point, the expression we have for $x$ will be that required by the claim.
\end{subproof}

Continuing the proof of the theorem, we now form a poset $P$ where the elements are the top degree faces and all nonempty intersections of sets of these, ordered by containment. 
This poset is a subposet of the face lattice of the $(|E|-1)$-dimensional cube
whose vertices correspond to the top degree faces. 
Proposition~\ref{prop:top degree} shows that every lattice point of 
$u\Delta+P(M)+t\nabla$ lies in at least one face in~$P$.
The total number of lattice points is given by inclusion-exclusion on
the function on~$P$ assigning to each element of~$P$ the number of lattice points
in that face. Let $[\cdot]$ denote the number of lattice points of the corresponding face. So we have that 
\begin{equation}
\label{eqn}
Q_M(t,u)=\sum_{i,j}c_{ij}\binom{u}{j}\binom{t}{i}=\sum_{k\geq 1}(-1)^k\sum_{\substack{S\subseteq \textrm{atoms}(P)\\ |S|=k}}\left[\bigwedge S\right]
\end{equation}
$$\qquad\qquad=\sum_{x\in P}\mu (0,x)[x]$$
where $\mu$ is the M\"{o}bius function. Now, as the face poset of the cubical complex $\mathcal{C}$ is Eulerian, we have that $\mu (x,y)=(-1)^{r(y)-r(x)}$. This means that

\begin{equation}
\label{eqn2}
\sum_{k\geq 1}(-1)^k\sum_{\substack{S\subseteq \textrm{atoms}(P)\\ |S|=k}}\left[\bigwedge S\right]=\sum_{E \ \textrm{face \ of} \ \mathcal{C}} (-1)^{\textrm{codim} E}\binom{t+i}{i}\binom{u+j}{j}
\end{equation}
 where $E$ is the product of an $i$-dimensional face of $\del$ with a $j$-dimensional face of $\nab$, that is, $E$ corresponds to a face of type $tF+G+uH$, where $G$ is a basis of $P(M)$.

$Q'_M$ expands as
$$Q'_M(x,y)=\sum_{i,j}c_{ij}(x-1)^i(y-1)^j=\sum_{i,j,k,l}c_{ij}\binom{i}{k}x^k(-1)^{i-k}\binom{j}{l}y^l(-1)^{j-l}$$
in which the coefficient of $x^ky^l$ is $\sum_{i,j}c_{ij}\binom{i}{k}(-1)^{i-k}\binom{j}{l}(-1)^{j-l}$. To compare this to the count in the lattice, we need to expand $\binom{t}{i}$ (and $\binom{u}{j}$) in the basis of $\binom{t+i}{i}$ (and $\binom{u+j}{j}$). This gives that $$\binom{t}{i}=\sum_{k=0}^i (-1)^{i-k}\binom{i}{k}\binom{t+k}{k},$$ as proven below.

\begin{sublemma}
\label{claim6}
For any positive integers $i,t$,
$$\binom{t}{i}=\sum_{k=0}^i (-1)^{i-k}\binom{i}{k}\binom{t+k}{k}.$$
\end{sublemma}

\begin{subproof}[Proof of Claim~\ref{claim6}]
The Vandermonde identity gives that $$\binom{t+i}{i}=\sum_{k=0}^i \binom{t}{k}\binom{i}{i-k}=\sum_{k=0}^i\binom{t}{k}\binom{i}{k}.$$ We will use the binomial inversion theorem to get $\binom{t}{i}$. Rewriting the above identity in this language, we have that $$f_i=\sum_{k=0}^ig_k\binom{i}{k},$$ where $f_i=\binom{t+i}{i}$ and $g_k=\binom{t}{k}$. Then, $$g_i=\sum_{k=0}^i(-1)^{i+k}f_k\binom{i}{k}.$$ Substituting in values again gives that $$\binom{t}{i}=\sum_{k=0}^i(-1)^{i+k}\binom{t+k}{k}\binom{i}{k},$$ as in the statement of the claim.
\end{subproof}
Substitute this into Equation \ref{eqn} to get
$$\sum_{k\geq 1}(-1)^k\sum_{\substack{S\subseteq \textrm{atoms}(P)\\ |S|=k}}\left[\bigwedge S\right]=\sum_{i,j,k,l}c_{ij}(-1)^{i-k}\binom{i}{k}\binom{t+k}{k}(-1)^{j-l}\binom{j}{l}\binom{u+l}{l}$$
$$\qquad\qquad = \sum_{k,l}[x^ky^l]Q'_M(x,y)\binom{t+k}{k}\binom{u+l}{l}.$$
Comparing this to Equation \ref{eqn2} proves Theorem \ref{coeffs}.
\end{proof}




The above proof immediately yields as a corollary that the signs of the coefficients of $Q'_M(x,y)$ are alternating. 

\begin{cor}\label{cor:alternating}
$(-1)^{|E|-1}Q'_M(-x,-y)$ has nonnegative coefficients in $x$ and~$y$.
\end{cor}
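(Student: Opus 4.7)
The plan is to read the sign pattern off directly from the proof of Theorem~\ref{coeffs}. That proof, by M\"obius inversion on the cubical complex $\mathcal C$ encoding the top-degree faces of $\mathcal F$ and their intersections, yields
\[
[x^iy^j]Q'_M(x,y) \;=\; \sum_{E} (-1)^{\operatorname{codim}_{\mathcal C} E},
\]
the sum running over exactly the cells $E = F + G + H$ of $\mathcal F$ counted (unsignedly) by Theorem~\ref{coeffs}: those with $\dim F = i$, $\dim H = j$, $G$ a vertex of $P(M)$, and $G$ not extendable to some $G' \supsetneq G$ with $F + G' + H$ still a cell.

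The essential observation is that every cell appearing in this sum has the same codimension in~$\mathcal C$. Indeed, $\mathcal C$ is pure of dimension $|E|-1$ (its facets being the top-degree faces, all of dimension $|E|-1$, as established before Proposition~\ref{prop:top degree}), and any cell $F+G+H$ with $G$ a vertex of $P(M)$ has dimension $\dim F+\dim H = i+j$, hence codimension exactly $|E|-1-i-j$ in~$\mathcal C$. Consequently every summand carries the common sign $(-1)^{|E|-1-i-j}$, and Theorem~\ref{coeffs} gives
\[
[x^iy^j]Q'_M(x,y) \;=\; (-1)^{|E|-1-i-j}\,N_{i,j},
\]
where $N_{i,j}\ge 0$ is the nonnegative integer count provided there.

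The corollary is then a one-line verification: substituting $(-x,-y)$ multiplies the $x^iy^j$-coefficient by $(-1)^{i+j}$, and the prefactor $(-1)^{|E|-1}$ absorbs the remaining sign, yielding
\[
(-1)^{|E|-1}Q'_M(-x,-y) \;=\; \sum_{i,j} (-1)^{|E|-1}(-1)^{i+j}(-1)^{|E|-1-i-j}\,N_{i,j}\,x^iy^j \;=\; \sum_{i,j} N_{i,j}\,x^iy^j,
\]
which is termwise nonnegative. There is no real obstacle beyond Theorem~\ref{coeffs} itself; the only thing to check is the uniformity of the codimension $|E|-1-i-j$ across cells of a fixed bidegree $(i,j)$, which follows from the purity of $\mathcal C$ noted above.
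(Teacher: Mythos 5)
Your proposal is correct and tracks the paper's own (very terse) argument: the paper states that the alternation of signs "immediately yields" from the proof of Theorem~\ref{coeffs}, and what you have done is spell out the reason, namely that each cell $u\Delta_X+\mathbf{e}_B+t\nabla_Y$ contributing to $[x^iy^j]Q'_M$ has geometric dimension $i+j$ while the cubical subcomplex $\mathcal C$ is pure of dimension $|E|-1$, so the M\"obius/inclusion-exclusion sign $(-1)^{\operatorname{codim} E}=(-1)^{|E|-1-i-j}$ is constant across all cells of a given bidegree. The final substitution computation is routine and correct, so there is nothing to add beyond noting that you have supplied the "uniform codimension" observation that the paper leaves implicit.
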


This is not dissimilar to the Tutte polynomial, whose coefficients are all nonnegative.  
The coefficients of $Q'_M$, up to sign, have the combinatorial interpretation
of counting elements of~$P$ of form $u\Delta_X + {\bf e}_B + t\nabla_Y$
by the cardinalities of $X\setminus\{1\}$ and~$Y\setminus\{1\}$.
In particular the top degree faces are counted by the collection
of coefficients of~$Q'_M$ of top degree (hence the name),
and the degree $|E|-1$ terms of $Q'_M$ are always $(x+y)^{|E|-1}$.




The appearance of basis activities in Lemma~\ref{lem:B} reveals that
$P$ is intimately related to a familiar object in matroid theory,
the \emph{Dawson partition} \cite{dawson}. 
Give the lexicographic order to the power set $\mathcal P(E)$.
A partition of $\mathcal P(E)$ into intervals $[S_1,T_1],\ldots,[S_p,T_p]$ with indices such that $S_1<\ldots<S_p$  is a Dawson partition if and only if $T_1<\ldots < T_p$. 
Every matroid gives rise to a Dawson partition in which these intervals are $[B\setminus\mathrm{Int}(B),B\cup\mathrm{Ext}(B)]$ for all $B\in\mathcal B_M$.

\begin{prop}\label{prop:Dawson}
Let $[S_1,T_1],\ldots,[S_p,T_p]$ be the Dawson partition of~$M$.
The poset $P$ is a disjoint union of face posets of cubes $C_1,\ldots,C_p$
where the vertices of $C_i$ are the top-degree faces $u\Delta_X+{\bf e}_B+t\nabla_Y$ 
such that $X\in[S_i,T_i]$.
\end{prop}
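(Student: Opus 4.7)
The plan is to prove the proposition in two stages: first exhibit $P$ as a disjoint union of cube face posets, one per basis, and then identify the indexing of each cube's vertices with the corresponding Dawson interval.

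\emph{Decomposition and cube structure.} Lemma~\ref{meet} says that any two top-degree faces which meet carry a common basis, so the top-degree atoms of $P$ split according to the basis they contain, and the induced subposets $P_B$ are mutually disjoint in~$P$. Fix such a basis $B$, and let $\mathcal X_B$ be the collection of subsets $X\subseteq E$ with $1\in X$ for which Algorithm~\ref{alg:1} returns~$B$. Lemma~\ref{lattice} shows that whenever $X_1,X_2\in\mathcal X_B$, every $X_3$ with $X_1\cap X_2\subseteq X_3\subseteq X_1\cup X_2$ lies in $\mathcal X_B$ as well, so $\mathcal X_B$ is a Boolean interval $[S_B,T_B]$. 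The canonical Minkowski decomposition of faces of~$\mathcal F$ gives, for any collection $\{X_\alpha\}\subseteq\mathcal X_B$,
\[\bigcap_\alpha\bigl(u\del_{X_\alpha}+\mathbf e_B+t\nab_{Y_\alpha}\bigr)\;=\;u\del_{\bigcap_\alpha X_\alpha}+\mathbf e_B+t\nab_{\bigcap_\alpha Y_\alpha},\]
so the elements of $P_B$ correspond bijectively to faces of the cube $[S_B,T_B]$, with containments matching. Hence $P_B$ is the face poset of a cube of dimension $|T_B\setminus S_B|$.

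\emph{Identification with the Dawson interval.} It remains to verify $[S_B,T_B]=[B\setminus\mathrm{Int}(B),\,B\cup\mathrm{Ext}(B)]$. By Lemma~\ref{lem:B} with the convention of Remark~\ref{rem:activity}, $X\in\mathcal X_B$ iff no element of $X$ is externally inactive and no element of $Y=(E\setminus X)\cup\{1\}$ is internally inactive with respect to~$B$ (in the reversed order on~$E$). Reading off which elements are forced and which are free: the internally inactive elements of $B$ must lie in $X$, the externally inactive elements of $E\setminus B$ must lie in $Y$, and every remaining element---the active ones together with~$1$---is free. This identifies $S_B=B\setminus\mathrm{Int}(B)$ and $T_B=B\cup\mathrm{Ext}(B)$. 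Since Dawson's intervals partition $\mathcal P(E)$, summing the disjoint subposets $P_B$ over bases $B$ recovers the entire indexing of top-degree faces.

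\emph{Main obstacle.} The delicate step is the identification above: Lemma~\ref{lem:B} uses activities relative to the reversed order on~$E$, whereas the paper phrases the Dawson partition in the original order. One either invokes the Dawson partition with respect to the reversed order, or observes that the formula $[B\setminus\mathrm{Int}(B),\,B\cup\mathrm{Ext}(B)]$ yields a valid partition of $\mathcal P(E)$ for any total order on~$E$, so the statement of the proposition is unaffected up to which order defines $\mathrm{Int}$ and $\mathrm{Ext}$. The element~$1$, which by convention sits in both $X$ and $Y$, poses no difficulty: since $1\in\mathrm{Int}(B)\cup\mathrm{Ext}(B)$ for every basis~$B$, its role in $\mathcal X_B$ is forced rather than free, and the dimensions of the cube and of the Dawson interval match accordingly.
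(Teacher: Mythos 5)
Your proof follows the same skeleton as the paper's (very terse) argument: Lemma~\ref{meet} to split the atoms of $P$ by basis, Lemma~\ref{lattice} for the cube structure of each $P_B$, and Lemma~\ref{lem:B} to identify the indexing interval with the Dawson interval. The disjointness claim and the identity $\bigcap_\alpha(u\del_{X_\alpha}+\mathbf e_B+t\nab_{Y_\alpha})=u\del_{\bigcap X_\alpha}+\mathbf e_B+t\nab_{\bigcap Y_\alpha}$ are both correct and fill in steps the paper leaves implicit, so this is a reasonable expansion of the sketch.

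The one wobble is in the treatment of the element~$1$, and it traces back to the paper rather than to you. The termination conditions of Algorithm~\ref{alg:1} (no type-(a) or type-(b) move available) unwind directly to: every element of $X\setminus B$ is externally active, and every element of $B\cap Y$ is internally active, \emph{in the original order on $E$} — not the reversed one. (A quick check on $U_{2,3}$ with $X=\{1,3\}$, $Y=\{1,2\}$ returns $B=\{1,3\}$, and in the reversed order $1\in Y$ is internally inactive for this $B$, contradicting the ``reversed order'' reading of Lemma~\ref{lem:B}.) The paper's own proof of the present proposition says $1$ is active ``due to it being the smallest element'', which only makes sense in the original order. So the ``reversed order'' clause of Lemma~\ref{lem:B} is the outlier, and the cleanest resolution of your ``main obstacle'' is simply to read Lemma~\ref{lem:B} in the original order and take the Dawson partition in that same order. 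Relatedly, your justification that $1$ is free cites $1\in\mathrm{Int}(B)\cup\mathrm{Ext}(B)$, which is too weak — that holds trivially under Remark~\ref{rem:activity}. What is actually needed is that $1\in\mathrm{Int}(B)$ whenever $1\in B$ and $1\in\mathrm{Ext}(B)$ whenever $1\notin B$, so that $1\in T_i\setminus S_i$ for every $i$; this is automatic precisely because $1$ is smallest in the order, which is the paper's point. With that substitution, your identification of $[S_B,T_B]$ with the Dawson interval goes through.
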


The description of the cubes comes from Lemma \ref{lattice}. Note that the element $1$ is both internally and externally active with respect to every basis, due to it being the smallest element in the ordering. So, even though $1$ is in both $X$ and $Y$, it is in $T_i-S_i$ for all $i$.


\section{Polymatroids}
\label{sec:polymatroids}
In this section we investigate the invariants $Q_M$ and $Q'_M$ when $M=(E,r)$ is a polymatroid. 
Many familiar matroid operations have polymatroid counterparts, 
and we describe the behaviour of $Q'_M$ under these operations.
We see that it retains versions of several formulae true of the Tutte polynomial.

For instance, there is a polymatroid analogue of the direct sum of matroids:
given two polymatroids $M_1=(E_1,r_1),M_2=(E_2,r_2)$ with disjoint ground sets,
their \emph{direct sum} $M=(E,r)$ has ground set $E = E_1\sqcup E_2$
and rank function $r(S) = r_1(S\cap E_1)+r_2(S\cap E_2)$.
This definition extends the usual direct sum of matroids.

\begin{prop}\label{prop:direct sum}
Let $M_1\oplus M_2$ be the direct sum of two polymatroids $M_1$ and $M_2$.
Then 
\[Q'_{M_1\oplus M_2}(x,y)=(x+y-1)\cdot Q'_{M_1}(x,y)\cdot Q'_{M_2}(x,y).\]
\end{prop}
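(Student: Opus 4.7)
The plan is to establish an identity at the level of the ordinary generating function
\[F_M(a,b) := \sum_{t,u\ge 0} Q_M(t,u)\,a^t b^u,\]
then translate it to $Q'_M$ via the change of variables implicit in its definition. The first step is to characterise the lattice points of $P(M) + u\Delta + t\nabla$ for an arbitrary polymatroid~$M$. Two applications of Lemma~\ref{712} give, for each such lattice point $x$, a decomposition $x = p + d - d'$ with $p \in P(M)\cap\mathbb{Z}^E$ and $d, d' \in \mathbb{Z}_{\ge 0}^E$ satisfying $|d| = u$ and $|d'| = t$; summing coordinates and using $|p| = r(M)$ forces $|x| = r(M) + u - t$. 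Let $m_M(x) := \min \bigl\{\sum_{e \in E} \max(p_e - x_e,\,0) : p \in P(M)\cap\mathbb{Z}^E\bigr\}$. A short direct argument---pick a $p$ attaining the minimum, distribute the slack $t - m_M(x)$ among the coordinates of $d'$, and adjust $d$ correspondingly---shows that such a decomposition exists if and only if $m_M(x) \le t$. Hence
\[Q_M(t,u) = \#\{x \in \mathbb{Z}^E : |x| = r(M) + u - t \text{ and } m_M(x) \le t\}.\]

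When $M = M_1 \oplus M_2$, $P(M) = P(M_1) \times P(M_2)$, so integer points of $P(M)$ decompose uniquely into integer points of the $P(M_i)$; thus both $|x|-r(M)$ and $m_M(x)$ split additively over the factorisation $x = (x_1, x_2)$. Grouping by $x$ in $F_M$, with $t$ ranging over $\{m_M(x), m_M(x)+1,\ldots\}$ and $u = t + |x| - r(M)$, one finds
\[F_M(a,b) = \frac{1}{1-ab}\sum_{x \in \mathbb{Z}^E} b^{|x| - r(M)}(ab)^{m_M(x)}.\]
By the additivities just noted, the inner sum factors under a direct sum, yielding
\[F_{M_1\oplus M_2}(a,b) = (1 - ab)\, F_{M_1}(a,b)\, F_{M_2}(a,b).\]

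To translate to $Q'$, substitute $Q_M(t,u) = \sum_{i,j} c_{ij}\binom{u}{j}\binom{t}{i}$ into $F_M$ and sum using $\sum_{n\ge 0}\binom{n}{k}z^n = z^k/(1-z)^{k+1}$; this shows $F_M(a,b) = xy\, Q'_M(x,y)$ under $x = 1/(1-a)$, $y = 1/(1-b)$. A direct computation gives $1 - ab = (x+y-1)/(xy)$ in these variables. Substituting into the generating-function identity above and cancelling a common factor of $xy$ yields $Q'_{M_1\oplus M_2}(x,y) = (x+y-1)\, Q'_{M_1}(x,y)\, Q'_{M_2}(x,y)$.

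The main obstacle is the first step: identifying the correct scalar invariants $|x|$ and $m_M(x)$ that govern membership in $P(M) + u\Delta + t\nabla$, and verifying this characterisation via Lemma~\ref{712} together with the integrality of $P(M)\cap\mathbb{Z}^E$. Once this description is in place, both the factorisation of $F_M$ under direct sum and the conversion back to $Q'_M$ are routine generating-function manipulations.
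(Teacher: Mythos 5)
Your proof is correct, and while it arrives at the same generating-function identity as the paper, it gets there by a genuinely different and arguably cleaner argument. The paper proves the convolution identity
\[
\sum_{k=0}^{\min\{t,u\}} Q_{M_1\oplus M_2}(t-k,u-k)
\;=\;\sum_{t_1=0}^{t}\sum_{u_1=0}^{u}Q_{M_1}(t_1,u_1)\,Q_{M_2}(t-t_1,u-u_1)
\]
by a multiplicity count: a given lattice point $q=(q_1,q_2)$ has a maximal number of ``cancellations'' $K$, and one checks that exactly $K+1$ values of $k$ index it on the left and exactly $K+1$ pairs $(t_1,u_1)$ index it on the right, after which taking ordinary generating functions yields the factorisation $\frac{1}{1-vw}\,\hat F_M = \hat F_{M_1}\hat F_{M_2}$. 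You instead introduce the scalar invariant $m_M(x)$ (the minimum total decrement from an integer point of $P(M)$ down to $x$, which equals $t$ minus the paper's cancellation count $K$), characterise membership in $P(M)+u\Delta+t\nabla$ in closed form via $|x|=r(M)+u-t$ and $m_M(x)\le t$, and then sum over $x$ first, giving $F_M = \frac{1}{1-ab}\sum_x b^{|x|-r(M)}(ab)^{m_M(x)}$; the additivity of $m_M$ and of $|\cdot|-r(\cdot)$ over $E_1\sqcup E_2$ then makes the inner sum factor outright. This replaces the paper's bijective multiplicity-matching with the observation that a single additive invariant controls everything; the passage from $Q_M$ to $Q'_M$ via $x=1/(1-a)$, $y=1/(1-b)$ and $1-ab=(x+y-1)/(xy)$ is then identical to the paper's. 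Both are valid; yours is more conceptual where the paper's is more hands-on. Two small points worth spelling out if this were to be written in full: you should note that $m^+(x):=\min_p\sum_e\max(x_e-p_e,0)$ and $m_M(x)$ are attained at the same $p$ (since for any $p$ their difference is $|x|-r(M)$, a constant), which both justifies the slack-distribution step and guarantees that the $b$-exponent $m_M(x)+|x|-r(M)=m^+(x)\ge 0$, so the inner sum really is a formal power series; and the factorisation $P(M_1\oplus M_2)\cap\mathbb Z^E=(P(M_1)\cap\mathbb Z^{E_1})\times(P(M_2)\cap\mathbb Z^{E_2})$, which underlies the additivity of $m_M$, deserves an explicit sentence.
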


\begin{proof}
%
We will need to use distinguished notation for our standard simplices according to the matroid under consideration.
So we write $\del_i = \conv\{\mathbf e_j : j\in E_i\}$ for $i=1,2$
and reserve the unadorned name $\del$ for $\conv\{\mathbf e_j : j\in E\}$.
Define $\nab_i$ and $\nab$ similarly.

The basic relationship between $M=M_1\oplus M_2$ and its summands in terms of our lattice point counts is the following equality:
\begin{equation}\label{eq:ziggurat}
\sum_{k=0}^{\min\{t,u\}} Q_M(t-k,u-k)=
\sum_{t_1=0}^{t}\sum_{u_1=0}^{u}Q_{M_1}(t_1,u_1)\cdot Q_{M_2}(t-t_1,u-u_1).
\end{equation}

The right hand side counts tuples $(t_1,u_1,q_1,q_2)$ 
where $q_1\in (P(M_1)+u_1\del_1+t_1\nab_1)\cap\mathbb Z^{E_1}$
and $q_2\in (P(M_2)+(u-u_1)\del_2+(t-t_1)\nab_2)\cap\mathbb Z^{E_2}$.
Because the coordinate inclusions of $\del_1$ and $\del_2$ are subsets of $\del$,
and similarly for $\nab$, the concatenation $(q_1,q_2)\in\mathbb Z^E$
is a lattice point of $P(M)+u\del+t\nab$.
The left hand side counts pairs $(k,q)$ where 
$q$ is a lattice point of $P(M)+(u-k)\del+(t-k)\nab$;
this polyhedron is a subset of $P(M)+u\del+t\nab$.
To prove equation~\eqref{eq:ziggurat} we will show that each $q=(q_1,q_2)\in\mathbb Z^E$
occurs with the same number of values of $k$ on the left as values of $(t_1,u_1)$ on the right.

If $q\in(P(M)+u\del+t\nab)\cap\mathbb Z^E$ then there is some maximal integer $K$ such that
$q\in(P(M)+(u-K)\del+(t-K)\nab)\cap\mathbb Z^E$, and then $q$ is counted just $K+1$ times on the left hand side,
namely for $k=0,1,\ldots,K$.
This $K$ is what we called the number of \emph{cancellations} in $q$ when proving Theorem~\ref{thm:T to Q}.
Choose an expression
\[q=e_B+{\bf e}_{x_1}+\cdots+{\bf e}_{x_{u-K}}-{\bf e}_{y_1}-\cdots -{\bf e}_{y_{t-K}},\]
where $B$ is a basis of $M$ and the $x_i$ and $y_i$ are elements of $E$.
Suppose the $x_i$ and $y_i$ are ordered such that 
$x_i\in E_1$ if and only if $i\leq t'$ and $y_i\in E_1$ if and only if $i\leq u'$.
Then we have
\begin{align*}
q_1 &= e_{B\cap E_1}+{\bf e}_{x_1}+\cdots+{\bf e}_{x_{u'}}-{\bf e}_{y_1}-\cdots -{\bf e}_{y_{t'}},\\
q_2 &= e_{B\cap E_2}+{\bf e}_{x_{u'+1}}+\cdots+{\bf e}_{x_{u-K}}-{\bf e}_{y_{t'+1}}-\cdots -{\bf e}_{y_{t-K}},
\end{align*}
and both of these expressions also have the minimal number of ${\bf e}_{x_i}$ and ${\bf e}_{y_i}$ summands,
or else our expression for $q$ would not have done.
Thus $q_1$ is in $P(M_1)+u'\del+t'\nab$ but not $P(M_1)+(u'-1)\del+(t'-1)\nab$,
and $q_2$ is in $P(M_1)+(u-u'-K)\del+(t-t'-K)\nab$ but not $P(M_1)+(u-u'-K-1)\del+(t-t'-K-1)\nab$.
So the possibilities for $t_1$ and $u_1$ on the right hand side of \eqref{eq:ziggurat}
are those that arrange $t_1\geq t'$ and $t-t_1\geq t-t'-K$, and the corresponding equations for the $u$ variables,
together with $t_1-u_1=t'-u'$.  
There are exactly $K+1$ solutions here as well, namely $t_1 = t', t'+1, \ldots, t'+K$.

Using equation~\eqref{eq:ziggurat} within a generating function for $t$ and~$u$, we have that
\begin{align*}
&\mathrel{\phantom=}
\sum_{t,u}\sum_{k=0}^{\min\{t,u\}}Q_{M_1\oplus M_2}(t-k,u-k)v^uw^t\\ &=\sum_{t,u}\sum_{t_1=0}^{t}\sum_{u_1=0}^{u}Q_{M_1}(t_1,u_1)v^{u_1}w^{t_1} \cdot Q_{M_2}(t-t_1,u-u_1)v^{u-u_1}w^{t-t_1} \\
& = \sum_{t_1,u_1}Q_{M_1}(t_1,u_1)v^{u_1}w^{t_1}\\
&\hspace{15mm}\cdot\sum_{t-t_1,u-u_1}Q_{M_2}(t-t_1,u-u_1)v^{u-u_1}w^{t-t_1} \\
& = \left(\sum_{t,u}Q_{M_1}(t,u)v^{u}w^{t}\right)\left(\sum_{t,u}Q_{M_2}(t,u)v^{u}w^{t}\right).
\end{align*}
The left-hand side can also be simplified as
\begin{align*}
&\mathrel{\phantom=}
\sum_{t,u}\sum_{k=0}^{\min\{t,u\}}Q_{M_1\oplus M_2}(t-k,u-k)v^uw^t\\
&=\sum_{k\geq 0}\sum_{t,u\geq k}Q_{M_1\oplus M_2}(t-k,u-k)v^{u-k}w^{t-k}(vw)^k\\
&= \sum_{k\geq 0}\sum_{t,u}Q_{M_1\oplus M_2}(t,u)v^uw^t(vw)^k\\
&= \dfrac{1}{1-vw}\cdot\sum_{t,u}Q_{M_1\oplus M_2}(t,u)v^uw^t
\end{align*}
and thus we have that 
$$\dfrac{1}{1-vw}\cdot\sum_{t,u}Q_{M_1\oplus M_2}(t,u)v^uw^t= \left(\sum_{t,u}Q_{M_1}(t,u)v^{u_1}w^{t_1}\right)\left(\sum_{t,u}Q_{M_2}(t,u)v^{u_1}w^{t_1}\right).$$

The generating functions above allow us easily to change basis from $Q_M$ to $Q'_M$:
\begin{align*}
\sum_{t,u}Q_M(t,u)v^uw^t &= \sum_{t,u}\sum_{i,j}c_{ij}\binom{u}{j}\binom{t}{i}v^uw^t\\
&= \sum_{i,j}c_{ij}\dfrac{v^j}{(1-v)^{j+1}}\cdot\dfrac{w^i}{(1-w)^{i+1}}\\
&= \dfrac{1}{(1-v)(1-w)}\cdot Q'_M\left(\dfrac{w}{1-w}+1,\dfrac{v}{1-v}+1\right)
\end{align*}
where the last line follows from the definition of $Q'_M$. Hence we have 
$$Q'_{M_1\oplus M_2}\left(\dfrac{1}{1-w},\dfrac{1}{1-v}\right)=\dfrac{1-vw}{(1-v)(1-w)}Q'_{M_1}\left(\dfrac{1}{1-w},\dfrac{1}{1-v}\right)Q'_{M_2}\left(\dfrac{1}{1-w},\dfrac{1}{1-v}\right)$$
and substituting $w=\dfrac{x-1}{x}$ and $v=\dfrac{y-1}{y}$ gives the result.
\end{proof}

\begin{remark}
It follows from Proposition~\ref{prop:direct sum}
that the rescaled matroid invariant $(x+y-1)\cdot Q'_M(x,y)$ is exactly multiplicative under direct sum.
Recasting Theorem~\ref{thm:T to Q} in terms of this rescaled invariant also eliminates a denominator.
And, by Proposition~\ref{prop:Dawson}, its coefficients can be interpreted as
counting intervals in the Boolean lattice $\mathcal P(E)$ contained in a single part of a Dawson partition
according to the ranks of their minimum and maximum,
with no need to accord a special role to one element.
\end{remark}

Moving on to the next polymatroid operation, 
it is apparent from the symmetry of Theorem~\ref{lpg} under switching $x$ and~$y$
that $Q'_M$, like the Tutte polynomial, exchanges its two variables under matroid duality.
The best analogue of duality for polymatroids requires a parameter $s$
greater than or equal to the rank of any singleton; then if $M=(E,r)$ is a polymatroid,
its \emph{$s$-dual} is the polymatroid $M^*=(E,r^*)$ with 
\[r^*(S) = r(E) + s|E\setminus S| - r(E\setminus S).\]
The 1-dual of a matroid is its usual dual.

\begin{prop}\label{prop:duality}
For any polymatroid $M=(E,r)$ and any $s$-dual $M^*$ of~$M$, 
$Q'_{M^*}(x,y)=Q'_M(y,x)$.
\end{prop}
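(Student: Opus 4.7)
The plan is to reduce the statement to a clean geometric identity on base polytopes and then translate it through the changes of variables defining $Q'_M$.

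First, I would verify that the base polytope of the $s$-dual is the reflection of $P(M)$ through the point $\tfrac{s}{2}\mathbf{e}_E$, i.e.\
\[P(M^*) = s\mathbf{e}_E - P(M).\]
This is where the hypothesis on $s$ (which bounds singleton ranks) is used to ensure that the coordinates of $s\mathbf{e}_E - x$ remain nonnegative whenever $x$ is a base of~$M$. The content is that a vector $x\in\mathbb R^E$ satisfies the polymatroid inequalities for $M^*$ if and only if $s\mathbf{e}_E-x$ satisfies those for~$M$; this falls out directly from expanding $r^*(S)$ via $(s\mathbf{e}_E-x)\cdot\mathbf{e}_S = s|S| - x\cdot\mathbf{e}_E + x\cdot\mathbf{e}_{E\setminus S}$ and using the rank inequalities of~$M$ on $E\setminus S$.

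Next, using $\nabla = -\Delta$, I would rewrite
\[P(M^*) + u\Delta + t\nabla \;=\; s\mathbf{e}_E - P(M) - t\Delta - u\nabla \cdot(-1) \;=\; s\mathbf{e}_E - \bigl(P(M) + t\Delta + u\nabla\bigr).\]
Since $s\mathbf{e}_E\in\mathbb Z^E$, the affine involution $x\mapsto s\mathbf{e}_E - x$ restricts to a bijection on lattice points, so
\[Q_{M^*}(t,u) \;=\; \#\bigl(P(M^*) + u\Delta + t\nabla\bigr)\cap\mathbb Z^E \;=\; \#\bigl(P(M) + t\Delta + u\nabla\bigr)\cap\mathbb Z^E \;=\; Q_M(u,t).\]

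Finally, I would propagate this symmetry through the change of variables defining $Q'$. Writing $Q_M(t,u)=\sum c_{ij}\binom{u}{j}\binom{t}{i}$, the identity $Q_{M^*}(t,u)=Q_M(u,t)$ says that the coefficients $c^*_{ij}$ of $Q_{M^*}$ equal $c_{ji}$. Substituting into the definition $Q'_{M^*}(x,y) = \sum c^*_{ij}(x-1)^i(y-1)^j$ and reindexing gives $Q'_{M^*}(x,y) = \sum c_{ij}(y-1)^i(x-1)^j = Q'_M(y,x)$, as required.

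The main obstacle is the first step: pinning down the precise relationship $P(M^*) = s\mathbf{e}_E - P(M)$ between the two base polytopes, and confirming that the role of~$s$ as a bound on singleton ranks is exactly what is needed for $P(M^*)$ to be an honest polymatroid polytope (so that Lemma~\ref{712} applies and lattice points can be tracked through Minkowski sums). Once that geometric fact is in hand, the remainder is essentially a symmetry argument combined with $\nabla=-\Delta$.
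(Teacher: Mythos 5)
Your argument is essentially the paper's own: both use the affine involution $x\mapsto s\mathbf{e}_E-x$, which maps $P(M)$ onto $P(M^*)$, to carry the Minkowski sum $P(M^*)+u\Delta+t\nabla$ bijectively (preserving lattice points) onto a translate of $P(M)+t\Delta+u\nabla$, giving $Q_{M^*}(t,u)=Q_M(u,t)$ and hence the stated symmetry of $Q'$. Aside from a stray ``$\cdot(-1)$'' typo in your intermediate display (the endpoints of that chain are correct) and an unnecessary appeal to Lemma~\ref{712} (no decomposition into integral summands is needed here, only that an integral affine involution preserves lattice points), your reasoning matches the paper's.
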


\begin{proof}
Let $\phi:\mathbb R^E\to\mathbb R^E$ be the involution that subtracts every coordinate from~$s$. 
Definition~\ref{dfn:polytope} implies that
$\phi$ is a bijection which takes elements of $P(M)$ to elements of $P(M^*)$;
it also clearly preserves the property of being a lattice point. 
This gives that 
\begin{align*}
\#(P(M)+u\Delta+t(-\Delta))&=\#(\phi(P(M))+u\phi(\Delta)+t\phi(-\Delta))\\
& =\#(P(M^*)+u(\nabla+1_E)+t(\Delta+1_E)\\
& =\#(P(M^*)+(t-u)1_E+u\nab+t\Delta)\\
&= \#(P(M^*)+u\nab+t\Delta)
\end{align*}
where the last line is true due to the polytope being a translation of the one in the line above. The statement follows.
\end{proof}

Given a subdivision $P_1,\ldots,P_n$ of a polytope $P$, a \emph{valuation} is a function $f$ such that 
\[f(P)=\sum\limits_{P_i}f(P_i)-\sum\limits_{P_i,P_j}f(P_i\cap P_j)+\ldots+(-1)^{n-1} f(P_1\cap\cdots \cap P_n).\] 
The number of lattice points in a polytope is a valuation: if $f(P)$ is this counting function,
then it is easily checked that each lattice point counted in $f(P)$ also contributes exactly one to the sum on the right hand side.
Thus the invariant $Q'_M$ is a valuation as well. 
\begin{prop}
Let $\mathcal F$ be a polyhedral complex whose total space is a polymatroid base polytope
$P(M)$, and each of whose faces $F$ is a polymatroid base polytope $P(M(F))$.  Then
\[Q'_M(x,y) = \sum_{\mbox{\scriptsize $F$ a face of $\mathcal F$}}
(-1)^{\dim(P(M))-\dim F} Q'_{M(F)}(x,y).\]
\end{prop}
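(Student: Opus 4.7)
The plan is to deduce the identity from the valuation property of lattice-point counting noted in the paragraph just above. First, since $Q'_M(x,y)$ is obtained from the lattice-point polynomial $Q_M(t,u)$ by an $M$-independent linear change of basis (read off the $c_{ij}$ from $Q_M(t,u)=\sum c_{ij}\binom{t}{i}\binom{u}{j}$, then substitute $Q'_M(x,y)=\sum c_{ij}(x-1)^i(y-1)^j$), any relation of the shape $Q_M = \sum_F \alpha_F\,Q_{M(F)}$ with $\alpha_F$ independent of $M$ transfers automatically to $Q'_M$. It therefore suffices to establish the numerical identity
\[Q_M(t,u)=\sum_{F\in\mathcal F}(-1)^{\dim P(M)-\dim F}\,Q_{M(F)}(t,u)\]
at each fixed $(t,u)\in\mathbb Z^2_{\geq 0}$.

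At fixed $(t,u)$, I would consider the map $R\mapsto\#((R+u\Delta+t\nabla)\cap\mathbb Z^E)$, which by construction assigns $Q_{M(F)}(t,u)$ to each cell $F$ of $\mathcal F$. The plan is to argue that $\{F+u\Delta+t\nabla:F\in\mathcal F\}$ is a polyhedral subdivision of $P(M)+u\Delta+t\nabla$: each cell $F$ and $u\Delta+t\nabla$ are generalised permutohedra, and Minkowski sums within this class respect common normal-fan refinements, so the cells glue correctly. Applying the strong-valuation inclusion-exclusion (\`a la McMullen) to this subdivision expresses the total lattice-point count as an alternating sum over intersections of maximal cells; Möbius inversion on the face poset of $\mathcal F$ then rewrites this as the alternating sum $\sum_F(-1)^{\dim P(M)-\dim F}Q_{M(F)}(t,u)$ over all faces.

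The step I expect to require the most care is the combinatorial-topological bookkeeping of which faces of $\mathcal F$ enter with which sign, especially for faces lying on $\partial P(M)$: one must confirm that their contributions collect correctly, which rests on the hypothesis that every face of $\mathcal F$ is itself a polymatroid polytope (so each $Q_{M(F)}$ is defined by the same construction used for $M$) together with the standard fact that the closed star of any cell in a polyhedral subdivision has Euler characteristic $1$. Once the identity is established at every $(t,u)\in\mathbb Z^2_{\geq 0}$, the reduction in the first paragraph promotes it to the polynomial identity claimed for $Q'_M(x,y)$.
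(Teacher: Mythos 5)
Your overall plan matches the paper's: reduce to a valuation identity for the lattice-point count $Q_M(t,u)$ at each fixed $(t,u)$ and then transfer it to $Q'_M$ via the $M$-independent linear change of basis, and the reduction in your first paragraph is correct. The step that fails is the assertion that $\{F+u\Delta+t\nabla : F\in\mathcal F\}$ forms a polyhedral subdivision of $P(M)+u\Delta+t\nabla$. It does not, even for generalised permutohedra. Take $E=\{1,2\}$, let $P(M)$ be the segment $\conv\{(0,2),(2,0)\}$, subdivided at $(1,1)$ into two polymatroid base polytopes $P_1=\conv\{(0,2),(1,1)\}$ and $P_2=\conv\{(1,1),(2,0)\}$, and set $u=1$, $t=0$, so you are summing with $\Delta=\conv\{{\bf e}_1,{\bf e}_2\}$. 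Since $P_1$, $P_2$ and $\Delta$ are all parallel segments in direction ${\bf e}_1-{\bf e}_2$, the sums $P_1+\Delta=\conv\{(0,3),(2,1)\}$ and $P_2+\Delta=\conv\{(1,2),(3,0)\}$ overlap in the positive-length segment $\conv\{(1,2),(2,1)\}$, so they do not meet along a proper common face and are not the cells of a subdivision. Having a common coarsening of the braid fan controls the normal fan of each individual sum, but it says nothing about whether the translated pieces tile $P(M)+u\Delta+t\nabla$.

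What makes the proposition true anyway is a strictly stronger fact than ordinary subdivision inclusion-exclusion: Minkowski addition by a fixed polytope is itself valuative, i.e.\ if $\sum_i c_i[P_i]=0$ holds as an identity of indicator functions, then $\sum_i c_i[P_i+Q]=0$ for every polytope $Q$ (a Groemer/McMullen-type extension theorem). Applied to the subdivision $\mathcal F$ of $P(M)$ with $Q=u\Delta+t\nabla$, this gives exactly the numerical identity you need at each $(t,u)$, despite the summed cells failing to subdivide; in the example above one checks directly that $[P_1+\Delta]+[P_2+\Delta]-[(P_1\cap P_2)+\Delta]=[P(M)+\Delta]$ pointwise, the overlap being accounted for by $(P_1\cap P_2)+\Delta$. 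The paper's own treatment is terse (it simply asserts that $Q'_M$ inherits the valuation property from lattice-point counting), so the gap is easy to miss, but to make your argument correct you should invoke this Minkowski-valuativity theorem rather than the false subdivision claim.
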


For example, if $M$ is a matroid and we relax a circuit-hyperplane, we get the following result:

\begin{cor}
Take a matroid $M=(E,r)$ and let $C\subset E$ be a circuit-hyperplane of $M$. Let $M'$ be the matroid formed by relaxing $C$. Then $Q'_M(x,y)=Q'_{M'}(x,y)-x^{n-r(M)-1}y^{r(M)-1}$.
\end{cor}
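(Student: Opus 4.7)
The plan is to reduce the identity to a brief Tutte-polynomial calculation via Theorem~\ref{thm:T to Q}. Since relaxation preserves both $|E| = n$ and the total rank $r(M)$, applying Theorem~\ref{thm:T to Q} to $M$ and to $M'$ and subtracting yields
\[
Q'_{M'}(x,y) - Q'_M(x,y) = \frac{x^{n-r(M)}y^{r(M)}}{x+y-1}\,(T_{M'} - T_M)\!\left(\frac{x+y-1}{y},\,\frac{x+y-1}{x}\right),
\]
reducing the corollary to computing $T_{M'} - T_M$ and simplifying the substituted expression.

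The first step is to observe that relaxation changes $r_M(S)$ only at $S = C$, raising it from $r(M)-1$ to $r(M)$. For $S\not\supseteq C$ the intersection $C \cap S$ is a proper subset of the circuit $C$, hence independent in $M$, so introducing $C$ as a new basis cannot raise $r_M(S)$; for $S \supsetneq C$ the hyperplane property $\operatorname{cl}_M(C) = C$ already forces $r_M(S) = r(M)$. The circuit--hyperplane conditions moreover force $|C| = r(M)$ via $|C|-1 = r_M(C) = r(M)-1$. Consequently, in the corank--nullity form of Definition~\ref{dfn:Tutte} only the summand for $S = C$ differs between $T_M$ and $T_{M'}$, contributing $(x-1)(y-1)$ to $T_M$ and $1$ to $T_{M'}$. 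Therefore
\[
T_{M'}(x,y) - T_M(x,y) = 1 - (x-1)(y-1) = x + y - xy.
\]

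The remaining step is the substitution. Writing $X = (x+y-1)/y$ and $Y = (x+y-1)/x$, both $X+Y$ and $XY$ share the factor $(x+y-1)/(xy)$, and the residual bracket collapses to $(x+y) - (x+y-1) = 1$, giving $(T_{M'}-T_M)(X,Y) = (x+y-1)/(xy)$. Plugging this into the displayed identity cancels both copies of $x+y-1$ and one power each of $x$ and $y$, leaving exactly $x^{n-r(M)-1}y^{r(M)-1}$. The main obstacle is just tracking this telescoping cleanly; no deeper difficulty arises. A valuation-theoretic alternative based on the proposition immediately preceding the corollary --- decomposing $P(M')$ into $P(M)$ and an auxiliary matroid polytope glued along the partition-matroid face $P(U_{r(M)-1,C} \oplus U_{1,E\setminus C})$ --- is also available but reduces to essentially the same rational-function manipulation, so the direct route above is preferable.
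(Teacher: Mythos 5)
Your proof is correct.  The Tutte-polynomial step is right: for a circuit--hyperplane $C$ one has $|C|-1=r_M(C)=r(M)-1$, and the only subset whose rank changes under relaxation is $S=C$ itself (any other $S$ with $r_{M'}(S)>r_M(S)$ would have to contain the new independent set $C$ and satisfy $r_M(S)=r_M(C)$, forcing $S\subseteq\cl_M(C)=C$), so the corank--nullity sum changes only in that one term, giving $T_{M'}-T_M=1-(x-1)(y-1)=x+y-xy$.  The substitution $X=(x+y-1)/y$, $Y=(x+y-1)/x$ collapses $X+Y-XY$ to $(x+y-1)/(xy)$, and the prefactor $x^{n-r(M)}y^{r(M)}/(x+y-1)$ then cancels cleanly to $x^{n-r(M)-1}y^{r(M)-1}$, as you computed.

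This is, however, a genuinely different route from the one the paper has in mind.  The paper states the corollary as an instance of the valuation proposition immediately above it: since $P(M')=\conv(P(M)\cup\{\mathbf e_C\})$, the polytope $P(M')$ decomposes as $P(M)$ glued to a simplex $\Delta'$ along a common facet $F$, all three pieces being (poly)matroid polytopes, and $Q'$ is additive under this decomposition.  That route requires identifying the matroids of $\Delta'$ and $F$ and computing their $Q'$-polynomials; it rewards you with a picture of where the monomial $x^{n-r(M)-1}y^{r(M)-1}$ ``lives'' geometrically, and it has the virtue of illustrating the valuation machinery, which was presumably the point of placing the corollary where it is.  Your route, by contrast, leans entirely on Theorem~\ref{thm:T to Q} and a classical fact about the Tutte polynomial under relaxation; it is shorter, self-contained, and avoids any polyhedral case analysis, but it stays within the matroid-only regime where Theorem~\ref{thm:T to Q} is available, whereas the valuation argument is phrased for general polymatroid subdivisions.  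Both are valid; you correctly flagged the alternative yourself.
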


Now consider deletion and contraction in polymatroids.
We have that $P(M\backslash a)=\{p\in P(M) \ | \ p_a=\underline{k}\}$, where $\underline{k}$ is the minimum value $p_a$ takes (this will be $0$ unless $a$ is a coloop), and that $P(M/a)=\{p\in P(M) \ | \ p_a=\overline{k}\}$, where $\overline{k}$ is the maximum value $p_a$ takes. When $M$ is a matroid, these two sets partition $P(M)$. However, when $M$ is a polymatroid, we can have points in $P(M)$ where $\underline{k}<p_a<\overline{k}$. Let $N_k:=\{p\in P(M) \ | \ p_a=k\}$, and let $P(N_k)$  be the polytope consisting of the convex hull of such points. Now we have that $P(M\backslash a)$, $P(M/a)$, and the collection of $P(N_k)$ for $k\in\{\underline{k}+1,\ldots,\overline{k}-1\}$ partition $P(M)$. We will refer to each of these parts, when they exist, as an \emph{$a$-slice} of $P(M)$. When we do not include the deletion and contraction slices, we can talk about \emph{(strictly) interior} slices.

\begin{thm}
\label{delcont}
Let $M=(E,r)$ be a polymatroid and take $a\in E(M)$. Then 
$$Q'_M(x,y)=(x-1)Q'_{M\backslash a}(x,y)+(y-1)Q'_{M/a}(x,y)+\sum_{N} Q'_N(x,y).$$
where $N$ ranges over $a$-slices of $P(M)$. 
\end{thm}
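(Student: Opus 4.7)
The plan is to partition the lattice points of $P(M) + u\Delta + t\nabla$ by the integer value of their $a$-coordinate, identify each slice's count with a term on the right-hand side, and translate the resulting identity back to $Q'$-form via the generating-function substitution from Theorem~\ref{thm:T to Q}. Under that substitution $x = 1/(1-w)$, $y = 1/(1-v)$, the claimed $Q'$-identity is equivalent to
\[
Q_M(t,u) \;=\; \sum_{t'=0}^{t-1} Q_{M\backslash a}(t',u) \;+\; \sum_{u'=0}^{u-1} Q_{M/a}(t,u') \;+\; \sum_{N} Q_N(t,u), \qquad (\star)
\]
where $N$ ranges over all $a$-slices of $P(M)$: the factors $(x-1)$ and $(y-1)$ expand as $\sum_{i\ge1}w^i$ and $\sum_{j\ge1}v^j$ and produce precisely the partial sums in $t'$ and $u'$ upon extracting the coefficient of $v^u w^t$.

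For each integer $c$ let $L_c$ be the set of lattice points of $P(M) + u\Delta + t\nabla$ with $a$-coordinate~$c$; these slices are disjoint and $Q_M(t,u) = \sum_c|L_c|$. I aim to prove the slice identification $|L_c| = Q_{N_c}(t,u)$ for $c \in [\underline k, \overline k]$, $|L_c| = Q_{M\backslash a}(t - \underline k + c, u)$ for $\underline k - t \le c < \underline k$, and symmetrically for $\overline k < c \le \overline k + u$. Applying the elementary slicing identity $(A+B) \cap \{p_a = c\} = \bigcup_{c_1+c_2 = c}(A \cap \{p_a = c_1\})+(B \cap \{p_a = c_2\})$ iteratively, I will write
\[
L_c \;=\; \bigcup_{k + j - l = c} \bigl(\pi(P(N_k)) + (u-j)\Delta' + (t-l)\nabla'\bigr) \cap \mathbb Z^{E\setminus a},
\]
with $\pi$ the projection that forgets the $a$-coordinate, $\Delta'$ and $\nabla'$ the simplex and its reflection in $\mathbb R^{E\setminus a}$, and the union ranging over $k \in [\underline k, \overline k]$, $0 \le j \le u$, $0 \le l \le t$. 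The polymatroid base exchange axiom yields the inclusions $\pi(P(N_{k-1})) \subseteq \pi(P(N_k)) + \Delta'$ and $\pi(P(N_{k+1})) \subseteq \pi(P(N_k)) + \nabla'$; iterating these together with the elementary balancing inclusion $(u - j)\Delta' + (s - j)\nabla' \subseteq u\Delta' + s\nabla'$ (verified by adding $je_i$ to the $\Delta'$-summand and subtracting it from the $\nabla'$-summand for any fixed $i \in E\setminus a$) collapses every piece of the union into the extremal one at $(k,j,l) = (c,0,0)$ for interior $c$, at $(\underline k, 0, \underline k - c)$ for $c < \underline k$, or at $(\overline k, c - \overline k, 0)$ for $c > \overline k$. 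Lemma~\ref{712} then upgrades the set-level collapse to a bijection on lattice points.

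Summing $|L_c|$ over all integers $c$ reproduces $(\star)$ exactly: $c \in [\underline k, \overline k]$ contributes the $\sum_N Q_N(t,u)$ with $N$ ranging over all $a$-slices, and $c$ outside that range contributes the two partial sums. Reversing the generating-function substitution then yields the theorem. The main obstacle is the slice identification: while the polymatroid exchange axiom supplies the essential geometric ingredient, the argument must distinguish the cases $k < c$, $k = c$, and $k > c$ (and similarly at the boundary), checking at each step that the iterated inclusions compose cleanly with the balancing inclusion so as to land inside the extremal Minkowski sum.
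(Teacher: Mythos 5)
Your proposal takes essentially the same route as the paper: partition the lattice points of $P(M)+u\Delta+t\nabla$ by the integer value of their $a$-coordinate, identify each slice's count with a $Q$-value of an $a$-slice of the polymatroid (possibly with shifted dilation parameters), and carry the resulting identity back to $Q'_M$ by a change of basis. Your $(\star)$ is an exact rearrangement of the paper's Equation (4.12.4): your sum over \emph{all} $a$-slices with partial sums $\sum_{t'=0}^{t-1}$ and $\sum_{u'=0}^{u-1}$ equals the paper's sum over \emph{strictly interior} slices with partial sums up to $t$ and $u$. The generating-function step using $(x-1)=\sum_{i\ge 1}w^i$ is equivalent to the paper's direct binomial manipulation ($\sum_{j=0}^u\binom jk=\binom{u+1}{k+1}$, etc.), and is arguably cleaner. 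Your slice identification via the slicing identity and iterated inclusions is the same content as the paper's Claims 4.13 and 4.14, expressed more systematically; I checked the collapse arithmetic (it works, using $t-l+k-c=t-j$ for the $k>c$ case and the analogous cancellations elsewhere, followed by the balancing inclusion). One point you should flag: the ``upward'' inclusion $\pi(P(N_{k-1}))\subseteq\pi(P(N_k))+\Delta'$ does \emph{not} follow directly from the exchange axiom as the paper states it --- that axiom only lets you decrement $x_i$ when $x_i>y_i$, which gives the ``downward'' inclusion $\pi(P(N_{k+1}))\subseteq\pi(P(N_k))+\nabla'$ directly. The upward direction requires either the symmetric form of the polymatroid exchange property, or passage to the $s$-dual as in Proposition~\ref{prop:duality}, or a submodularity/tight-set argument. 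The paper's own proof of Claim 4.13 has the same blind spot (it only treats $(p_1)_a=k+\lambda$ with $\lambda>0$), so this is a shared elision rather than an error in your plan, but since your collapse argument explicitly uses both inclusions, you should supply the dual form.
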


Note that when $M$ is a matroid, the statement simplifies to the formulae given in Lemma \ref{recurrence}: if $a$ is neither a loop nor coloop, then the $a$-slices are  $P(M\backslash a)$ and $P(M/a)$, so  
\begin{align*}
Q'_M(x,y)&=(x-1)Q'_{M\backslash a}(x,y)+(y-1)Q'_{M/a}(x,y)+\sum_{N_k} Q'_{N_k}(x,y)\\
&=(x-1)Q'_{M\backslash a}(x,y)+(y-1)Q'_{M/a}(x,y)+Q'_{M\backslash a}(x,y)+Q'_{M/a}(x,y)\\
&=xQ'_{M\backslash a}(x,y)+yQ'_{M/a}(x,y).
\end{align*}
When $a$ is a loop or coloop, $M\backslash a=M/a$, and we have only one $a$-slice: $P(M\backslash a)=P(M/a)$. So we get that 
\begin{align*}
Q'_M(x,y)&=(x-1)Q'_{M\backslash a}(x,y)+(y-1)Q'_{M/a}(x,y)+\sum_{N_k} Q'_{N_k}(x,y)\\
&= (x+y-1)Q'_{M/a}(x,y)
\end{align*}
as in Lemma \ref{recurrence}.

Also note that this result gives another proof of Theorem \ref{thm:T to Q} as a corollary.

\begin{proof}
In this proof, we make constant use of Lemma~\ref{712} in order to express lattice points as sums of lattice points.
Let $M$ be a polymatroid. If the rank function of $M$ is a matroid rank function summed with a function of the form $S\mapsto\sum\limits_{i\in S} c_i$, then $P(M)$ will be a translate of a matroid polytope, and the same argument as above will hold. Assume now that this is not the case. This means that for any $a\in E(M)$, there will be at least one $a$-slice of $P(M)$, $P(N_k)$, which is not equal to $P(M/a)$ or $P(M\backslash a)$. 
\begin{sublemma}
\label{claim1}
Define $R$ to be the polytope $\{q\in P(M)+u\Delta_E+t\nabla_E \ | \ q_a=k\}$, and define $S$ to be $P(N_k)+u\Delta_{E-a}+t\nabla_{E-a}$. If $R$ intersects the set of lattice points of $P(M)$, then $R=S$.
\end{sublemma}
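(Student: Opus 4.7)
I will prove the set equality $R=S$ by establishing both inclusions.

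The inclusion $S\subseteq R$ is immediate: any $q = p + u\delta + t\eta$ with $p \in P(N_k)$, $\delta \in \Delta_{E-a}$, $\eta \in \nabla_{E-a}$ belongs to $P(M)+u\Delta_E+t\nabla_E$ since each summand is contained in the corresponding unrestricted body, and $q_a = p_a + 0 + 0 = k$, so $q \in R$.

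For $R\subseteq S$, my plan is to exploit the hypothesis: a lattice point of $P(M)$ lying in $R$ is a basis ${\bf e}_B$ with $({\bf e}_B)_a = k$, which forces $\underline{k} \leq k \leq \overline{k}$ and in particular $P(N_k) \neq \emptyset$. Given $q \in R$, I start from an arbitrary decomposition $q = p + u\delta + t\eta$ with $p \in P(M)$, $\delta \in \Delta_E$, $\eta \in \nabla_E$ and apply an iterative redistribution to drive $\delta_a$ and $|\eta_a|$ to zero. Whenever $\delta_a > 0$, I locate $j \in E-a$ for which $p + \mu({\bf e}_a - {\bf e}_j) \in P(M)$ for some $\mu > 0$; such $j$ exists whenever $p_a < \overline{k}$, because the segment from $p$ to any $p^\ast \in P(M)$ with $p^\ast_a = \overline{k}$ lies in $P(M)$, and its displacement decomposes into the polymatroid edge directions ${\bf e}_i - {\bf e}_l$, at least one of which must be of the form ${\bf e}_a - {\bf e}_j$. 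I then transfer a small amount of weight from $\delta_a$ to $\delta_j$ while translating $p$ along ${\bf e}_a - {\bf e}_j$ by the corresponding amount. This preserves $q$, keeps $\delta \in \Delta_E$ and $p \in P(M)$, and strictly decreases $\delta_a$. A mirror move handles the case $\eta_a < 0$ using the condition $p_a > \underline{k}$.

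The main obstacle is the boundary analysis showing the exchange never deadlocks. If $p_a$ reaches $\overline{k}$ while $\delta_a > 0$, the $\delta$-move is blocked; but the balance $p_a + u\delta_a + t\eta_a = k \leq \overline{k}$ then forces $\eta_a < 0$, allowing a mirror move that pulls $p_a$ off the boundary. A dual argument resolves $p_a = \underline{k}$. To make the iteration fully rigorous and avoid an infinite cascade of infinitesimal moves, I would recast it as a linear program: the polytope of valid decompositions $(p,\delta,\eta)$ of $q$ is nonempty, and I plan to show that the minimum of $\delta_a - \eta_a$ over it equals $0$ by using the exchange argument above as a perturbation at an optimal vertex; any witness to this minimum is the required decomposition showing $q \in S$. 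Alternatively, leveraging Lemma~\ref{712}, one could first perform the argument on lattice points via combinatorial basis exchanges and then extend to arbitrary points by convexity.
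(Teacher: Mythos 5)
Your plan (continuous redistribution of weight between the three summands, with termination enforced by a linear program) is a genuinely different route from the paper's, which works exclusively with lattice points: the paper invokes Lemma~\ref{712} to decompose a lattice point $q\in R$ as ${\bf e}_B+\sum{\bf e}_{i_k}-\sum{\bf e}_{j_l}$, then performs a finite sequence of discrete base exchanges (the property cited from \cite{hibi}) to eliminate all $\pm{\bf e}_a$ summands, cancelling paired $+{\bf e}_a,-{\bf e}_a$ terms by replacing both with a different element. Termination is automatic there because each step is an integer exchange, and the equality $R=S$ then follows from both being lattice polytopes.

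Your continuous version, as written, has real gaps. First, the existence of a direction ${\bf e}_a-{\bf e}_j$ feasible at $p$ does not follow from ``the displacement $p^*-p$ decomposes into polymatroid edge directions'': in any such walk from $p$ to $p^*$ the step of the form ${\bf e}_a-{\bf e}_j$ may occur only after earlier steps, and so need not be feasible at $p$ itself. What you actually need is a fractional exchange property for polymatroid base polytopes (take the intersection $A$ of all tight sets containing $a$ at $p$; since $p(A)=r(A)\geq p^*(A)$ and $p_a<p^*_a$, some $j\in A$ has $p_j>p^*_j$, and this $j$ works). State and prove this explicitly. Second, your deadlock analysis misses the degenerate case $\overline{k}=\underline{k}$, where $p_a$ is pinned and neither the $\delta$-move nor the $\eta$-move can move $p$; there you need a third move which trades mass directly between $\delta_a$ and $\eta_a$ (holding $p$ fixed, set $\dot\delta=(\mathbf e_j-\mathbf e_a)/u$, $\dot\eta=(\mathbf e_a-\mathbf e_j)/t$), and you should check that some $j$ has room in both $\delta_j$ and $\eta_j$. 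Third, the LP reformulation you propose to avoid Zeno-type non-termination is only a sketch; the perturbation analysis at an optimal vertex, using the two exchange cases plus the third move, is exactly the content still missing. Your closing remark --- run the argument on lattice points via Lemma~\ref{712} and extend to arbitrary points by integrality of both $R$ and $S$ --- is the paper's approach and closes all of these gaps at once; commit to it rather than leaving it as a footnote.
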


\begin{subproof}[Proof of Claim~\ref{claim1}]
It is clear that the lattice points of $S$ are contained in $R$. Take a point in $R$, $q_1=p_1+{\bf e}_{i_1}+\cdots+{\bf e}_{i_u}-{\bf e}_{j_1}-\cdots -{\bf e}_{j_t}$. We will show that we can write this as a point contained in $S$, $q_2=p_2+{\bf e}_{m_1}+\cdots +{\bf e}_{m_{u}}-{\bf e}_{n_1}-\cdots -{\bf e}_{n_t}$, where no ${m_i}$ or ${n_j}$ can be equal to $a$. 

If $(p_1)_a=k$, then we simply choose $p_2$ to be $p_1$ and choose $m_k=i_k$, $n_k=j_k$ for all $k\in\{1,\ldots,t\}$, with one possible change: if we have ${i_k}={j_l}=a$ in $q_1$, in $q_2$ replace ${m_k}$ and ${n_l}$ with $b$, where $b$ is any other element in $E(M)$. Note ${\bf e}_a$ must always appear paired in this way, such that $(q_1)_a=k$, and so this change does not affect the coordinate values of $q_2$.

If $(p_1)_a\neq k$, we first must rewrite the expression for $q_1$. 
By the base exchange property for polymatroids (\cite[Theorem 4.1]{hibi}), given $p_1$ and any point $p_3\in P(M)$, if $(p_1)_i>(p_3)_i$ there exists $l$ such that $(p_1)_l< (p_3)_l$ and $p_1-{\bf e}_i+{\bf e}_l\in P(M)$. 
Let $(p_1)_a=k+\lambda$, where $\lambda>0$. Then, by repeatedly applying the exchange property, we get that $p_1-\lambda {\bf e}_a+{\bf e}_{l_1}+\cdots+{\bf e}_{l_\lambda}\in P(M)$. Then we can find $q_2$ by setting $p_2=p_1-\lambda {\bf e}_a+{\bf e}_{l_1}+\cdots+{\bf e}_{l_\lambda}$, so $$q_2=p_2+\lambda {\bf e}_a-{\bf e}_{l_1}-\cdots-{\bf e}_{l_\lambda}+{\bf e}_{i_1}+\ldots+{\bf e}_{i_u}-{\bf e}_{j_1}-\cdots -{\bf e}_{j_t}=q_1.$$ Note that as $(q_1)_a=k$ and $(p_2)_a=k$, there must be $\lambda$ $-{\bf e}_{j_k}$ terms equal to $-{\bf e}_a$, so
$$q_2=p_2-{\bf e}_{l_1}-\cdots-{\bf e}_{l_\lambda}+{\bf e}_{i_1}+\ldots+{\bf e}_{i_u}-{\bf e}_{j_1}-\cdots -{\bf e}_{j_{t-\lambda}}$$
which is of the correct form, completing the proof of Claim~\ref{claim1}.
\end{subproof}

\begin{sublemma}
\label{6.12}
Let $N_i$ be a strictly interior slice of $P(M)$. Then $P(M)+t\Delta_E+t\nabla_{E}=(P(M/a)+u\del_E+t\nab_{E-a}) \ \sqcup \ \bigsqcup\limits_i (P(N_i)+t\del_{E-a}+t\nab_{E-a}) \ \sqcup \ (P(M\backslash a)+t\del_{E-a}+t\nab_E)$.
\end{sublemma}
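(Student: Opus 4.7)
The plan is to interpret the claim as an identity of lattice-point sets and prove it by partitioning according to the value $v := q_a$ of the $a$-coordinate. Disjointness of the three families on the right is immediate: the first set, $P(M/a)+u\Delta_E+t\nabla_{E-a}$, contains only points with $q_a \geq \overline{k}_M$ (since $p_a = \overline{k}_M$ on $P(M/a)$ and $\nabla_{E-a}$ cannot decrease $q_a$); the third set, $P(M\backslash a)+u\Delta_{E-a}+t\nabla_E$, only points with $q_a \leq \underline{k}_M$; and each middle set $P(N_i)+u\Delta_{E-a}+t\nabla_{E-a}$ pins $q_a$ at exactly the interior value $i$. Containment of each piece in the LHS is immediate, since $P(M/a), P(N_i), P(M\backslash a) \subseteq P(M)$ and the smaller simplices are faces of the larger ones.

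The substance is therefore to show that every lattice point $q$ of $P(M)+u\Delta_E+t\nabla_E$ lies in at least one of these pieces. Using Lemma~\ref{712}, I would write $q = p + \mathbf{e}_{i_1}+\cdots+\mathbf{e}_{i_u} - \mathbf{e}_{j_1}-\cdots-\mathbf{e}_{j_t}$ with $p$ a basis of $M$, and set $v = q_a$. The range $\underline{k}_M < v < \overline{k}_M$ is exactly what Claim~\ref{claim1} delivers: a rewriting with $p \in N_v$ and no $\pm\mathbf{e}_a$ summands, placing $q$ in $P(N_v)+u\Delta_{E-a}+t\nabla_{E-a}$.

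For the two extremal ranges I would adapt the base-exchange technique used in the proof of Claim~\ref{claim1}. If $v \geq \overline{k}_M$, set $\lambda = \overline{k}_M - p_a \geq 0$ and apply the polymatroid base-exchange property $\lambda$ times to obtain indices $l_1,\ldots,l_\lambda \neq a$ such that $p' := p + \lambda\mathbf{e}_a - \mathbf{e}_{l_1}-\cdots-\mathbf{e}_{l_\lambda}$ still lies in $P(M)$; by construction $p'_a = \overline{k}_M$, so $p' \in P(M/a)$. Substituting into the expression for $q$ and using $v - \overline{k}_M \geq 0$, there are at least as many $\mathbf{e}_a$'s among the positive summands as there are $-\mathbf{e}_a$'s among the negative summands (counting the $\lambda$ fresh ones on each side); cancel pairs to eliminate every $-\mathbf{e}_a$ summand. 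What remains is an expression witnessing $q \in P(M/a) + u\Delta_E + t\nabla_{E-a}$. The case $v \leq \underline{k}_M$ is symmetric, exchanging the roles of $\Delta$ and $\nabla$ and delivering membership in $P(M\backslash a) + u\Delta_{E-a} + t\nabla_E$.

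The main obstacle will be the bookkeeping in the extremal cases: one must verify that the $\lambda$ newly-introduced positive summands are exactly absorbed by $\lambda$ cancellations against $\mathbf{e}_a$'s in the original $\Delta$ part, so that the totals $u$ and $t$ are preserved and every $-\mathbf{e}_a$ (respectively $\mathbf{e}_a$) disappears from the $\nabla$ (respectively $\Delta$) side. Once this arithmetic is organised, the argument runs in close parallel to Claim~\ref{claim1}.
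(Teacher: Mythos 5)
Your proposal is correct and follows essentially the same route as the paper: partition the lattice points of $P(M)+u\Delta_E+t\nabla_E$ by the value of the $a$-coordinate, dispatch the interior range by Claim~\ref{claim1}, and handle the two extremal ranges by polymatroid base exchange. The only packaging difference is that the paper collects the extremal layers via the intermediate identities (\ref{raise}) and (\ref{lower}) before proving those, whereas you produce the required expression directly; in fact the paper's assertion ``we must be able to write the part as $P(M/a)+(u-\lambda)\Delta_{E-a}+\lambda{\bf e}_a+t\nabla_{E-a}$'' is left implicit there and is exactly the base-exchange-and-cancellation argument you spell out. One small caution on your bookkeeping: ``cancelling'' a $\pm{\bf e}_a$ pair must be understood as replacing it by $\pm{\bf e}_b$ for some $b\neq a$ whenever you need to keep the totals $u$ and $t$ fixed (as the paper does in Claim~\ref{claim1}), and after absorbing the $\lambda$ fresh $-{\bf e}_a$ terms you must still neutralise any $-{\bf e}_a$ present in the original $\nabla$ part; the inequality $\#\{i_\ell=a\}-\#\{j_\ell=a\}\ge\lambda$ that you derive is exactly what guarantees enough $+{\bf e}_a$'s are available for both tasks.
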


\begin{subproof}[Proof of Claim~\ref{6.12}]
Take $P(M)+u\del_E+t\nab_E$ and split it into a collection of polytopes according to the value of $q_a$ for all points $q\in P(M)+u\del_E+t\nab_E$. The disjoint union of the lattice points of these parts clearly will give back those of the original polytope. By the previous result, if one of these parts intersects $P(M)$ we can write it as $P(N_k)+u\del_{E-a}+t\nab_{E-a}$. Otherwise, we must be able to write the part as $P(M/a)+(u-\lambda)\del_{E-a}+\lambda {\bf e}_a+t\nab_{E-a}$, where $\lambda>\ol{k}$, or as $P(M\backslash a)+t\del_{E-a}-\mu {\bf e}_a+(t-\mu)\nab_{E-a}$, where $\mu>\underline{k}$. 

We will show that 
\begin{equation}
\label{raise}
\bigsqcup_\lambda P(M/a)+(u-\lambda)\del_{E-a}+\lambda {\bf e}_a+t\nab_{E-a}=P(M/a)+u\del_E+t\nab_{E-a}.
\end{equation}
It is clear that the sets of lattice points of the summands are pairwise disjoint as the $a$-coordinates in each set must be different. It is also clear that the lattice points contained in the polytope on the left hand side are contained in that of the right hand side. Take a point $q_1=p_1+{\bf e}_{i_1}+\cdots+{\bf e}_{i_u}-{\bf e}_{j_1}-\cdots -{\bf e}_{j_t}$ contained in $P(M/a)+u\del_E+t\nab_{E-a}$. Let $(q_1)_a=\ol{k}+\mu$, where $\mu>0$. We need to write $q_1$ as $p_2+{\bf e}_{m_1}+\cdots +{\bf e}_{m_{u-\lambda}}+\lambda {\bf e}_a-{\bf e}_{n_1}-\cdots -{\bf e}_{n_t}$, a lattice point contained in one of the summands on the left hand side. Choose $\mu=\lambda$, $p_2=p_1$, $\{j_\alpha\}=\{n_\alpha\}$, and $\{{i_\beta} \ | \ i_\beta\neq a\}=\{{m_\beta}\}$ and the equality follows.

The same arguments show that
\begin{equation}
\label{lower}
\bigsqcup_\mu P(M\backslash a)+t\del_{E-a}-\mu {\bf e}_a+(t-\mu)\nab_{E-a}=P(M\backslash a)+u\del_{E-a}+t\nab_E
\end{equation}
and the claim follows.
\end{subproof}

\begin{sublemma}
\label{claim2}
We have that
$$\#(P(M/a)+u\del_E+t\nab_{E-a})=\sum_{j=0}^u\#(P(M/a)+j\del_{E-a}+t\nab_{E-a})$$
and
$$\#(P(M\backslash a)+u\del_{E-a}+t\nab_{E})=\sum_{j=0}^u\#(P(M\backslash a)+u\del_{E-a}+i\nab_{E-a}).$$
\end{sublemma}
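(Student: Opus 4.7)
The plan is to obtain this claim as an almost immediate consequence of equations \eqref{raise} and \eqref{lower} from the proof of Claim~\ref{6.12}. Equation \eqref{raise} asserts the disjoint decomposition
\[
\bigsqcup_{\lambda=0}^{u}\bigl(P(M/a)+(u-\lambda)\del_{E-a}+\lambda{\bf e}_a+t\nab_{E-a}\bigr)=P(M/a)+u\del_E+t\nab_{E-a},
\]
where the disjointness (on lattice points) comes from the fact that points in the $\lambda$-th summand all have $a$-coordinate equal to $\overline{k}+\lambda$. So my first step is to apply the lattice-point counting function to both sides; disjointness makes the left-hand side a sum of the individual counts.

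Next, I would observe that translation by the integer vector $\lambda{\bf e}_a$ is a bijection on $\mathbb Z^E$, hence preserves the number of lattice points of any polytope. This allows me to drop the $\lambda{\bf e}_a$ translate and obtain
\[
\sum_{\lambda=0}^{u}\#\bigl(P(M/a)+(u-\lambda)\del_{E-a}+t\nab_{E-a}\bigr)=\#\bigl(P(M/a)+u\del_E+t\nab_{E-a}\bigr),
\]
which becomes the stated identity after reindexing by $j=u-\lambda$. The second identity is proved in the same way, applying the lattice point counting function to the analogous decomposition \eqref{lower} of $P(M\backslash a)+u\del_{E-a}+t\nab_E$ as a disjoint union of translates of $P(M\backslash a)+u\del_{E-a}+(t-\mu)\nab_{E-a}$ by $-\mu{\bf e}_a$, and reindexing; note that on this side the sum should range over $j=0,\ldots,t$ (so the $i$ and $u$ appearing in the typeset statement are typos for $j$ and $t$).

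The real content was carried out already in Claim~\ref{6.12}; the present claim is bookkeeping that converts the set-theoretic decomposition there into a numerical identity. The only subtlety worth flagging is the disjointness of the pieces as sets of lattice points (as opposed to as polytopes, whose closures genuinely overlap along lower-dimensional facets), and this is immediate from the $a$-coordinate distinguishing the pieces.
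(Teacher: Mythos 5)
Your proof is correct and is exactly the argument the paper intends; the paper's own proof of this claim is the single sentence ``Take the cardinalities of both sides of Equations \eqref{raise} and \eqref{lower}.'' You have simply unpacked this, and in particular you have made explicit the two points the paper leaves implicit: that the decomposition is a disjoint union on lattice points (since the $a$-coordinate separates the pieces), and that the translates by $\pm\lambda{\bf e}_a$ can be dropped because integer translation preserves lattice-point counts. Your observation that the second displayed identity contains typos (the summation index $i$ should be $j$, and the upper limit should be $t$ rather than $u$) is also correct and worth flagging.
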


\begin{subproof}[Proof of Claim~\ref{claim2}]
Take the cardinalities of both sides of Equations \ref{raise} and \ref{lower}.
\end{subproof}

Continuing the proof of the theorem, we now that have 
\begin{equation}
\label{almost}
Q_M(t,u)=\sum_{N_k} Q_{N_k}(t,u)+\sum_{j=0}^u Q_{M/a}(t,j)+\sum_{i=0}^t Q_{M\backslash a}(i,u)
\end{equation}
where $k\in\{\underline{k}+1,\ldots,\ol{k}-1\}$, that is, $N_k$ is always a strictly interior slice of $P(M)$.

We now work out how the change of basis from $Q$ to $Q'$ transforms the sums in Equation \ref{almost}. Take a term in $Q_{M/a}$, $c_{ik}\binom{j}{k}\binom{t}{i}$. We have that
\begin{align*}
\sum_{j=0}^uc_{ik}\binom{j}{k}\binom{t}{i} &=c_{ik}\binom{u+1}{k+1}\binom{t}{i}\\
&= c_{ik}\binom{t}{i}\left(\binom{u}{k}+\binom{u}{k+1}\right).
\end{align*}
Now apply the change of basis to get
\begin{align*}
c_{ik}(x-1)^i((y-1)^k+(y-1)^{k+1}) & = c_{ik}(x-1)^i\left((y-1)^k(1+y-1)\right)\\
&=c_{ik}(x_1)^i(y-1)^ky.
\end{align*}
Thus 
$$\sum_{j=0}^u Q_{M/a}(t,j)=yQ'_{M/a}(t,u)$$
and similarly, $$\sum_{i=0}^t Q_{M\backslash a}(i,u)=xQ'_{M\backslash a}(t,u).$$

Finally, putting this together with Claim~\ref{6.12} and Equations \eqref{raise}, \eqref{lower} gives:
\begin{align*}
Q'_M(t,u)&=xQ'_{M/a}(t,u)+yQ'_{M/a}(t,u)+\sum_{\textrm{interior} \ N_k} Q'_{N_k}(t,u) \\
&= (x-1)Q'_{M/a}(t,u)+(y-1)Q'_{M/a}(t,u)+\sum_{N_k} Q'_{N_k}(t,u).
\end{align*}
This completes the proof of Theorem \ref{delcont}.
\end{proof}

Unfortunately, when $M$ is a polymatroid, there is no analogue to Corollary~\ref{cor:alternating}:
the coefficients of~$Q'_M$ do not have sign independent of~$M$, and thus
there can be no straightforward enumerative interpretation of the coefficients. 
This is a consequence of the failure of Theorem~\ref{coeffs} for polymatroids.
Here is an example to illustrate this.
\begin{ex}
The left of Figure~\ref{fig:2} displays the subdivision $\mathcal F$ for the sum of Example~\ref{ex:1}.
\begin{figure}
\begin{center}
\includegraphics[scale=0.666667]{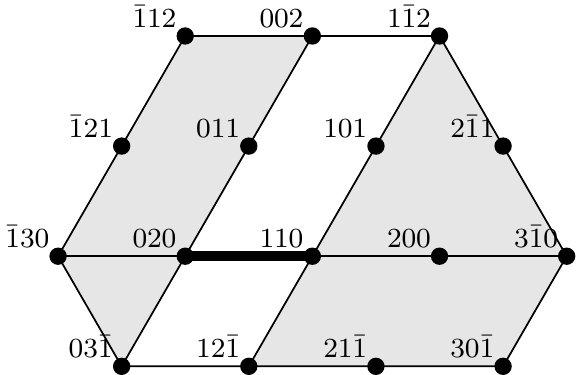}
\hspace{0.5in}
\includegraphics[scale=0.666667]{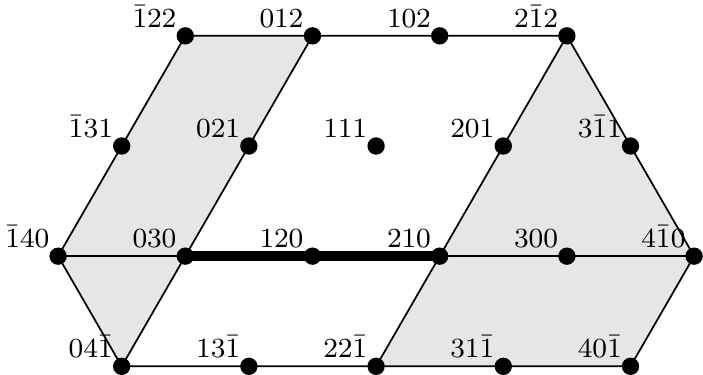}
\end{center}
\caption{At left, the regular subdivision $\mathcal F$ associated
to the Minkowski sum of Example~\ref{ex:1}, with 
$P(M)$ bolded and the top degree faces shaded in grey.
At right, the regular subdivision for a related polymatroid,
still with $(t,u)=(2,1)$.}\label{fig:2}
\end{figure}
We see that the four grey top degree faces contain all the lattice points between them,
and the poset $P$ contains two other faces which are pairwise intersections thereof,
the horizontal segment on the left with $(X,Y) = (1,12)$ and the one on the right with $(X,Y) = (12,1)$.
These are indeed enumerated, up to the alternation of sign,
by the polynomial $Q'_M(x,y) = x^2 + 2xy + y^2 - x - y$ found earlier.

By contrast, the right of the figure displays $\mathcal F$
for the polymatroid $M_2$ obtained by doubling the rank function of~$M$.
The corresponding polynomial is $Q'_{M_2}(x,y) = x^2 + 2xy + y^2 - 1$,
in which the signs are not alternating, dashing hopes of a similar enumerative interpretation.
In the figure we see that there are lattice points not on any grey face.
\end{ex}

\section{K\'alm\'an's activities}
\label{sec:kalman}
One motivation for the particular Minkowski sum we have employed in our definition
is that it provides a polyhedral translation of K\'alm\'an's construction of activities in a polymatroid.
This section explains the connection.
We first recall the definitions of K\'alm\'an's univariate activity invariants of polymatroids. 
These polynomials do not depend on the order on~$E$ that was used to define them \cite[Theorem~5.4]{kalman}. 

\begin{citedfn}[{\cite{kalman}}]
\label{activity}
Define the \emph{internal polynomial} and \emph{external polynomial} of a polymatroid $M=(E,r)$ by
\begin{displaymath}I_M(\xi)=\sum_{x\in \mathcal B_M\cap\mathbb{Z}^E} \xi^{\ol{{\iota}}(x)} \quad \mathrm{and} \quad X_M(\eta)=\sum_{x\in \mathcal B_M\cap\mathbb{Z}^E} \eta^{\ol{\varepsilon}(x)}.\end{displaymath}
\end{citedfn}

\begin{lemma}
\label{old}Let $P$ be a polymatroid polytope. At every lattice point $f\in P$, attach the scaled simplex \[f+t\operatorname{conv}(\{-e_i \ | \ i \ \textrm{is internally active in} \ f \ \textrm{or} \ i\notin f \}).\] This operation partitions $P+t\nabla $
into a collection of translates of faces of $t\nabla $,
with the simplex attached at $f$ having codimension $\ol{\iota}(f)$
within $P$.
\end{lemma}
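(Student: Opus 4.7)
\medskip
\noindent\textbf{Proof plan.}
The plan is first to verify the dimension/codimension count, then to prove the claimed partition by showing existence and uniqueness of a canonical expression for each lattice point of $P+t\nabla$.

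\emph{Dimension count.} A preliminary observation is that if $f_i=0$ then $i$ is automatically internally active in~$f$: a transfer out of $i$ would require decrementing $f_i$ below zero, which cannot land in $P$. Hence $\{i:f_i=0\}\subseteq\mathrm{Int}(f)$, so the set indexing the attached simplex has exactly $\iota(f)$ elements, making the simplex a translate of an $(\iota(f)-1)$-dimensional face of $t\nabla$. Since $P$ and $t\nabla$ each lie in an affine hyperplane of the form $\{\sum x_i=\text{const}\}$, the Minkowski sum $P+t\nabla$ has dimension $|E|-1$, and the codimension of the simplex is $|E|-\iota(f)=\overline{\iota}(f)$.

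\emph{Existence (coverage).} Given a lattice point $q\in P+t\nabla$, Lemma~\ref{712} lets us write
\[
q = f - {\bf e}_{i_1} - \cdots - {\bf e}_{i_t}
\]
with $f\in\mathcal B_M$ a lattice base. I will normalize this expression by the following rewriting: while some index $i_k$ is internally inactive in $f$ and satisfies $f_{i_k}>0$, Definition~\ref{dfn:activity} yields some $j<i_k$ such that $f':=f-{\bf e}_{i_k}+{\bf e}_j$ is a base; replace $(f,i_k)$ by $(f',j)$, which leaves $q$ unchanged. Each step strictly decreases the multiset $\{i_1,\ldots,i_t\}$ in lexicographic order, so the process terminates. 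At termination every $i_k$ lies in $\mathrm{Int}(f)\cup\{i:f_i=0\}$, which places $q$ in the simplex attached at the terminal $f$.

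\emph{Uniqueness.} This is the main obstacle and drives the whole argument. I will prove that the terminal base $f$ is independent of the starting expression by exhibiting an external characterization of it. Fix weights $\gamma_1\gg\gamma_2\gg\cdots\gg\gamma_{|E|}>0$ on $E$, decreasing rapidly with the order, and consider the linear functional $\gamma\cdot x$. Each rewriting step replaces an index $i_k$ by a smaller $j$, so $\gamma\cdot(e_{i_1}+\cdots+e_{i_t})$ strictly decreases and, since $q$ is fixed, $\gamma\cdot f$ strictly increases. Therefore the terminal $f$ is characterized as the unique $\gamma$-maximizer among the (finite) set of lattice bases $f\in\mathcal B_M$ for which $f-q$ is a nonnegative integer combination of standard basis vectors; the greedy algorithm for polymatroids (together with the rapid decay of~$\gamma$) guarantees that this maximizer is unique. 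Once $f$ is uniquely determined by $q$, the remaining summands $-{\bf e}_{i_1}-\cdots-{\bf e}_{i_t}=q-f$ are obviously unique, giving the disjointness of the distinct attached simplices on their lattice points; the same argument with $q$ varying continuously over a face promotes this to a genuine polyhedral partition (modulo shared faces of lower dimension, as is standard for mixed subdivisions). Combined with the existence part, this completes the proof.
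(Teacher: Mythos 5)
Your dimension count and your coverage argument are both sound, and they take a different route from the paper: the paper constructs the base $f$ by a deterministic greedy process (repeatedly adding $\mathbf{e}_j$ for the smallest admissible $j$), whereas you start from an arbitrary expression supplied by Lemma~\ref{712} and normalize it by nondeterministic rewriting, with termination controlled by a lexicographic descent on the multiset of subtracted indices. That part is fine, and indeed your whole strategy is closer in spirit to the paper's proof of Lemma~\ref{lem:B} (which also uses a weight vector $\gamma$ with rapid decay) than to the paper's inductive proof of this lemma.

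The gap is in uniqueness, at the word ``Therefore.'' You correctly observe that each rewriting step strictly increases $\gamma\cdot f$ (incidentally, you have a sign slip: since $\gamma_j>\gamma_{i_k}$ for $j<i_k$, the quantity $\gamma\cdot(\mathbf{e}_{i_1}+\cdots+\mathbf{e}_{i_t})$ strictly \emph{increases}, consistent with $\gamma\cdot f$ increasing). But monotonicity along your rewriting path only shows that the terminal $f$ beats the starting point; it does not show that the terminal $f$ is the global $\gamma$-maximizer over $\{g\in\mathcal B_M\cap\mathbb Z^E:g\ge q\}$. What terminality gives you is merely a \emph{local} optimum: no single improving edge-move of the restricted form exists. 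Passing from local to global requires a local-to-global argument for the (generalized-permutohedron) region $P(M)\cap(q+\mathbb R_{\ge 0}^E)$, and your appeal to ``the greedy algorithm'' establishes only uniqueness of the $\gamma$-maximizer, not that your terminal point attains it. To close the gap, you can argue by exchange, much as the paper does in the proof of Claim~\ref{claim4}: suppose $f\neq f'$ are both terminal, take the smallest $a$ with $f_a\neq f'_a$, say $f_a>f'_a\ge q_a$; the exchange property (\cite[Theorem 4.1]{hibi}) produces $l$ with $f_l<f'_l$ and $f-\mathbf{e}_a+\mathbf{e}_l\in\mathcal B_M$, and terminality of $f$ forces $l>a$; then $f'_l>f_l\ge q_l$, so the same argument with $f$ and $f'$ swapped produces $m>l$, and iterating gives a strictly increasing sequence of indices in the finite set $E$, a contradiction. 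Without some argument of this shape, the claimed external characterization of the terminal $f$ is unproven, and the disjointness of the attached simplices does not follow.
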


\begin{figure}[ht]
\begin{center}
\includegraphics{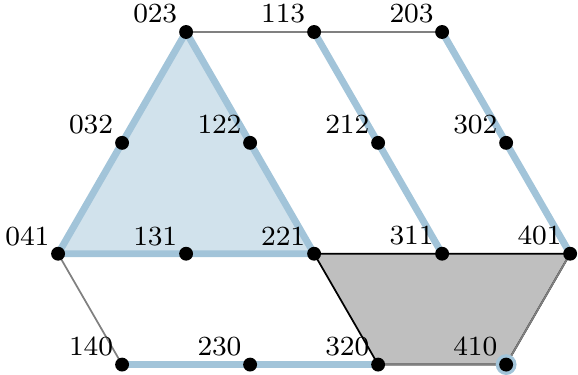}
\end{center}
\caption{An example of the partition of Lemma~\ref{old}.
A translate of $P$ is shown, each of its lattice points lying in the corresponding simplex of the partition.}\label{activitypartition}
\end{figure}

Figure \ref{activitypartition} shows a case of this operation, to illustrate why we speak of ``attaching'' a simplex. 
Our polymatroid $P(M)$ is coloured grey, and the polytope drawn is $P(M)+2\del+\nab$.
In the picture we translate $P(M)$ by a multiple of ${\bf e}_1$, namely $1{\bf e}_1$, in order to allow both polygons to reside in the same plane.
Coordinate labels are written without parentheses and commas. The blue areas are faces of the scaled simplices $2\del_f$, and can be seen to be a partition of the lattice points.
(Don't overlook the blue dot at 410!)

\begin{proof}
We will first show that our simplices covers all the lattice points of $P+t\nab$. 
Let $g\in P+t\nab$ be a lattice point. We will find $f$ such that $g\in t\Delta_f$.

Let $g_t=g$. For $i\in\{0,\ldots,t-1\}$, define
\begin{displaymath}
   g_i = \left\{
     \begin{array}{ll}
       g_{i+1}+{\bf e}_1 & \textrm{if} \ g_{i+1}+{\bf e}_1 \in P+(t-i)\nab\\
       g_{i+1}+{\bf e}_2 & \textrm{if} \ g_{i+1}+{\bf e}_1 \notin P+(t-i)\nab,  g_{i+1}+{\bf e}_2 \in P+(t-i)\nab\\
       \qquad \vdots & \\
       g_{i+1}+{\bf e}_n & \textrm{if} \ g_{i+1}+{\bf e}_{h} \notin P+(t-i)\nab, \ \forall j\in [n], \ g_{i+1}+{\bf e}_n \in P+(t-i)\nab
            \end{array}
   \right.
\end{displaymath} 

In other words, at each iteration $i$, we are adding an element ${\bf e}_j$ which is internally active with respect to $g_{i+1}$. We cannot replace ${\bf e}_j$ with ${\bf e}_i$ where $i<j$ and remain inside $P+t\nab$. Let ${\bf e}_{j_i}$ be the element added in iteration $t$. We get that $$g=g_t=g_0-{\bf e}_{j_1}-\ldots -{\bf e}_{j_t}\in P+t\nab.$$ Note that if we added ${\bf e}_i$ at some stage $g_s$ of the iteration, and ${\bf e}_j$ at stage $g_{s-1}$, then $j\geq i$. Thus if we take a tuple ${\bf e}_{k}$ such that $(k_1,\ldots,k_t)<(j_1,\ldots,j_t)$ with respect to the lexicographic ordering, then $g_0-\sum \limits_{t}{\bf e}_{j_t}+\sum\limits_{t} {\bf e}_{k_t}\notin P$, so each ${\bf e}_j$ is internally active. Thus $g_0=f$ and the ${\bf e}_j$ found define a simplex $\Delta_f$ such that $g\in t\Delta_f$.

Now we will show that this operation gives disjoint sets. We have that $\{t\del_f\}$ covers $P+t\nab$, and that $\{(t-1)\del_f\}$ partitions $P+(t-1)\nab$. Thus in order to show that $\{t\del_f\}$ is in fact a partition of the lattice points of $P$, it suffices to prove that if $g_t\in t\Delta_f$, then $g_{t-1}\in(t-1)\del_{f}$. Say that $f=g_t+{\bf e}_{i_1}+\cdots+{\bf e}_{i_t}$. This means that each element ${\bf e}_{i_k}$ is internally active at $f$ for all $k\in\{0,\ldots,t\}$. Now, for a contradiction, let $g_0=f'\neq f$, so that $g_{t-1}=g_t+{\bf e}_i\in (t-1)\Delta_{f'}$. Apply the same iterative process as before to get
$$ 
\begin{array}{ll}
f'=g_0& =  g_{t-1}+{\bf e}_{i_1}+\cdots+{\bf e}_{i_{t-1}}\\
& =  g_t+{\bf e}_{i}+{\bf e}_{i_1}+\cdots+{\bf e}_{i_{t-1}}\\
& =  f-{\bf e}_{i_t}+{\bf e}_i.
\end{array}$$
Thus ${\bf e}_{i_t}$ was internally inactive at $f$, contradicting our construction of $t\Delta_f$.
\end{proof}

The following is a direct consequence of this lemma,
and its exterior analogue which arises from replacing $\iota$ and $\nabla$
with $\varepsilon$ and $\Delta$.  

\begin{thm}\label{thm:activity}
Let $M$ be a polymatroid.
Then $I_M(\xi) = \xi\cdot Q'_M(\xi,1)$
and $X_M(\eta) = \eta\cdot Q'_M(1,\eta)$.
\end{thm}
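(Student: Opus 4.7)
The plan is to apply Lemma~\ref{old} directly and then translate the resulting univariate lattice-point count through the change of variables defining $Q'_M$; the external statement will follow by the symmetric argument using the exterior analog of Lemma~\ref{old}. I outline the internal case. First I would restrict $Q_M$ to the slice $u=0$, giving $Q_M(t,0)=\#(P(M)+t\nabla)\cap\mathbb Z^E$. By Lemma~\ref{old}, $P(M)+t\nabla$ decomposes as a disjoint union of scaled simplices $f+t\Delta^f$, one per base $f$, where $\Delta^f$ has $\iota(f)$ vertices (hence dimension $\iota(f)-1$). Since a standard $k$-vertex simplex dilated by $t$ contains $\binom{t+k-1}{k-1}$ lattice points,
\[
Q_M(t,0)=\sum_{f\in\mathcal B_M\cap\mathbb Z^E}\binom{t+\iota(f)-1}{\iota(f)-1}.
\]

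Next I would pass from $Q_M$ to $Q'_M$ by a two-step change of basis. Vandermonde's identity $\binom{t+k-1}{k-1}=\sum_i\binom{k-1}{i}\binom{t}{i}$ lets me read off the coefficients $c_{i,0}=\sum_f\binom{\iota(f)-1}{i}$ from the expansion $Q_M(t,0)=\sum_i c_{i,0}\binom{t}{i}$. Since setting $y=1$ in the definition of $Q'_M$ kills every $(y-1)^j$ with $j\geq 1$, the binomial theorem collapses
\[
Q'_M(\xi,1)=\sum_i c_{i,0}(\xi-1)^i=\sum_f\sum_i\binom{\iota(f)-1}{i}(\xi-1)^i=\sum_f\xi^{\iota(f)-1}.
\]
Multiplying through by $\xi$ gives $\xi\cdot Q'_M(\xi,1)=\sum_f\xi^{\iota(f)}=I_M(\xi)$. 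The exterior analog, which partitions $P(M)+u\Delta$ into simplices attached at each base and spanned by the externally active directions, handles $X_M(\eta)$ identically with $\varepsilon$ replacing $\iota$ throughout.

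The substantive input is Lemma~\ref{old} itself; everything else is formal. The main care-point I anticipate is verifying that $\Delta^f$ really does have exactly $\iota(f)$ vertices---that is, that for a lattice base the clause ``$i\notin f$'' of Lemma~\ref{old} is already subsumed by internal activity, since a coordinate-zero index admits no decrementing transfer---and then reconciling this $(\iota(f)-1)$-dimensional simplex with the codimension-$\ol{\iota}(f)$ phrasing of the lemma inside the $(|E|-1)$-dimensional polytope $P(M)+t\nabla$.
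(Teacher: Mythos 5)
Your computation is the intended unpacking of the paper's one-line proof (``a direct consequence of this lemma, and its exterior analogue''): you restrict to the $u=0$ slice, invoke Lemma~\ref{old} to partition $P(M)+t\nabla$ into simplices with $\iota(f)$ vertices, count $\binom{t+\iota(f)-1}{\iota(f)-1}$ lattice points in each, and convert via Vandermonde and the binomial theorem to get $\xi\cdot Q'_M(\xi,1)=\sum_{f\in\mathcal B_M\cap\mathbb Z^E}\xi^{\iota(f)}$. The care-point about the clause $i\notin f$ being subsumed by internal activity is correctly resolved (a zero coordinate admits no decrementing transfer), as is the dimension bookkeeping against the codimension-$\ol\iota(f)$ phrasing. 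This is the same route the paper takes.

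There is, however, a genuine gap at your final equality $\sum_f\xi^{\iota(f)}=I_M(\xi)$. Definition~\ref{activity} sets $I_M(\xi)=\sum_f\xi^{\ol\iota(f)}$ where $\ol\iota$ counts internally \emph{inactive} elements, not active ones, and these two generating functions differ. Concretely, for $M=U_{1,2}$ one finds $Q'_M(x,y)=x+y$, so $\xi\cdot Q'_M(\xi,1)=\xi^2+\xi=\sum_f\xi^{\iota(f)}$, whereas $\sum_f\xi^{\ol\iota(f)}=1+\xi$. So with Definition~\ref{activity} read literally, your last step (and the theorem as printed) does not hold; the identity your derivation actually proves is $\xi\,Q'_M(\xi,1)=\sum_f\xi^{\iota(f)}$, which matches $I_M(\xi)$ only after replacing $\ol\iota$ by $\iota$ in the definition, or equivalently after rewriting the claim as $I_M(\xi)=\xi^{|E|-1}Q'_M(1/\xi,1)$. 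You should have flagged this $\iota$-versus-$\ol\iota$ mismatch rather than silently identifying $\sum_f\xi^{\iota(f)}$ with $I_M(\xi)$ — that is the one non-formal step, and it is where the argument (as it must, given the inconsistency in the source) breaks.
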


It is this result which first motivated
the particular change of basis we have made from $Q_M$ to~$Q'_M$,
since an $i$-dimensional face of $t\Delta$ 
has $\displaystyle\binom{t+i}i = \sum_{k=0}^i\binom ik\binom ti$ lattice points.

The bivariate enumerator of internal and external activities for
polymatroids is not order-independent, and so we do not have that $Q'_M=\sum_{x\in\mathcal{B}_M\cap\mathbb{Z}^{|E|}}\xi^{\iota (x)}\eta^{\varepsilon (x)}$. 

\begin{ex}\label{bivaractivity}
Take the polymatroid with bases $\{(0,2,1),(1,1,1),(1,2,0),(2,1,0),(2,0,1)\}$.
Using the natural ordering on $[3]$, we have that $\sum\limits_{x\in\mathcal{B}_M\cap\mathbb{Z}^{|E|}}\xi^{\iota (x)}\eta^{\varepsilon (x)}=\xi^3\eta+2\xi^2\eta^2+\xi\eta^2+\xi\eta^3.$ If we instead use the ordering $2<3<1$, the enumerator is $\xi^3\eta^2+\xi^2\eta^2+\xi^2\eta+\xi\eta^2+\xi\eta^3.$
\begin{figure}[ht]
\begin{center}
\includegraphics[scale=1.25]{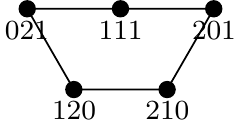}
\end{center}
\caption{The polymatroid of Example \ref{bivaractivity}.}
\end{figure}
\end{ex}

\begin{question}
Section~10 of~\cite{kalman} is dedicated to the behaviour of K\'alm\'an's activity invariants
in trinities in the sense of Tutte.  
One can obtain six hypergraphs from a properly three-coloured triangulation of the sphere
by deleting one colour class and regarding the second and third as vertices and hyperedges of a hypergraph.
K\'alm\'an considered the relationships between values of his invariants on these six hypergraphs.
With the proof of his main conjecture in~\cite{kalman2}, 
we know that besides the internal and external invariants $I_G$ and $X_G$ of a hypergraph $G$ with plane embedding,
there exists a third invariant $Y_G$
such that the values of the three invariants are permuted by the action of the symmetric group $S_3$ on the colour classes in the natural way.

Our work has cast $I_M$ and $X_M$ as univariate evaluations of a bivariate polynomial $Q'_M$,
and the content of Proposition~\ref{prop:duality} is that polymatroid duality,
i.e.\ exchanging the deleted and vertex colour classes, exchanges the two variables of $Q'_M$.
Is there a good trivariate polynomial of a three-coloured triangulation of the sphere which similarly encapsulates the above observations on trinities?
Permuting the three colours should permute its three variables, 
and three of its univariate evaluations should be $I_G$, $X_G$, and $Y_G$.
We would of course be even happier if $Q'_G$ were among its bivariate evaluations.
\end{question}

\bibliographystyle{plain} 
\bibliography{library}
\label{sec:biblio}

\end{document}